\newtheorem{thm}{Theorem}[section]
\newtheorem{lem}[thm]{Lemma}
\newtheorem{prop}[thm]{Proposition}
\newtheorem{cor}[thm]{Corollary}
\newtheorem*{rem}{Remark}
\numberwithin{equation}{section}
\newcommand{\E}{\mathbf{E}}
\newcommand{\M}{\mathcal{M}}
\newcommand{\prob}{\mathbf{P}}
\newcommand{\R}{\mathbb{R}}
\newcommand{\Z}{\mathbb{Z}}
\newcommand{\la}{\langle}
\newcommand{\ra}{\rangle}
\DeclareMathOperator{\Var}{Var}
\DeclareMathOperator{\Cov}{Cov}
\newcommand{\bB}{\mathbb{B}}
\renewcommand{\M}{\mathsf{M}}
\newcommand{\calF}{\mathcal{F}}
\newcommand{\eps}{\epsilon}
\renewcommand{\setminus}{\mathbin{\fgebackslash}}
\newcommand{\1}{\mathbf{1}}
\newcommand{\gE}{\mathsf{E}}
\newcommand{\gVar}{\mathsf{Var}}
\newcommand{\gH}{\mathsf{H}}
\begin{document}
	\title{Central Limit Theorem in Disordered Monomer-Dimer Model }
	\author{Wai-Kit Lam\thanks{National Taiwan University. Email: waikitlam@ntu.edu.tw. The research of W.-K. L. is supported by National Science and Technology Council in Taiwan Grant 110-2115-M002-012-MY3 and NTU New Faculty Founding Research Grant NTU-111L7452.} 
		\and Arnab Sen\thanks{University of Minnesota. Email: arnab@umn.edu. The research of A.-S. is partly supported by Simons Foundation MP-TSM-00002716}}
	\date{}
	\maketitle
	
\renewcommand{\thefootnote}{\fnsymbol{footnote}} 
\footnotetext{\emph{Keywords:}  Random weighted matching, Gibbs measure, Central Limit Theorem, Correlation decay.}

\begin{abstract}
We consider the disordered monomer-dimer model on general finite graphs with bounded degrees. Under the finite fourth moment assumption on the weight distributions, we prove a Gaussian central limit theorem for the free energy of the associated Gibbs measure with a rate of convergence. The central limit theorem continues to hold under a nearly optimal finite $(2+\eps)$-moment assumption on the weight distributions if the underlying graphs are further assumed to have a uniformly subexponential volume growth. This generalizes a recent result by Dey and Krishnan \cite{DeyKrishnan} who showed a Gaussian central limit theorem in the disordered monomer-dimer model on cylinder graphs. Our proof relies on the idea that the disordered monomer-dimer model exhibits a decay of correlation with high probability. We also establish a central limit theorem for the Gibbs average of the number of dimers where the underlying graph has subexponential volume growth and the edge weights are Gaussians.

\end{abstract}

\section{Introduction}
The monomer-dimer model was introduced as a simple yet effective model in condensed-matter physics that describes the absorption of monoatomic (monomer) or diatomic (dimer) on certain surfaces \cite{roberts, roberts1938some, chang1939statistical, chang1939number}. The dimers occupy the pair of adjacent sites (or edges) of a graph, whereas the monomers occupy the rest of the vertices. The key feature of the model is the hard-core interaction among the dimers that excludes two dimers to share a common vertex. In the language of graph theory, the set of dimers forms a (partial) matching of the graph and the monomers can be regarded as the unmatched vertices. The monomer-dimer model is the Gibbs measure on the space of dimer configurations or matching on a finite graph where the energy of a matching is given by the sum of the weights of the edges belonging to that matching (dimer activity) and the weights of the unmatched vertices (monomer activity).

In a seminal paper, Heilmann and Lieb \cite{HeilmannLieb} established that the monomer-dimer model does not have a phase transition by studying the zeroes of the partition function. If the monomer weights are taken to be a constant $\nu$, then the partition function can be viewed as a polynomial in $e^\nu$. Heilmann and Lieb showed that the all zeroes of this polynomial, known as the matching polynomial, lie on the  imaginary axis. This  makes the free energy an analytic function of $\nu$, which implies the absence of any phase transition. This special localization property of the zeroes of the partition function also yields a central limit theorem for the number of edges in a random matching for a large class of finite graphs \cite{godsil1981matching, kahn2000normal, lebowitz2016central}.


%
One can naturally inject randomness into the monomer-dimer model by considering the underlying graph to be random or (and) by taking the weights to be random. For a sequence of finite graphs converging locally to a random tree, the limiting free energy of the monomer-dimer model with constant weights can be computed via so-called cavity method and the solution turns out to be a function of the unique fixed point of some distributional recursion relation \cite{zdeborova2006number,alberici2014solution, matching_infinite}. In \cite{alberici2015mean}, Alberici et.\ al.\ considered the monomer-dimer model on a complete graph with constant edge weights and i.i.d.\ random vertex weights. Exploiting a Gaussian representation of the partition function, they provided a variational formula for the limiting free energy. 

Recently, Dey and Krishnan \cite{DeyKrishnan} studied the monomer-dimer model with  both vertex and edge weights being random (referred to by them as the disordered monomer-dimer model) on cylinder graphs (i.e., the cartesian product of a path graph on $n$ vertices, and a fixed graph). Among other results, they proved that the free energy has Gaussian fluctuation. The proof of their central limit theorem is based on the dyadic decomposition of the free energy into independent components which heavily relies on the one-dimensional nature of the underlying graph. Such decomposition does not extend immediately to the higher dimensional lattices.  The aim of our paper is to show that the free energy of the disordered monomer-dimer model on any bounded degree finite graph is asymptotically normal  under a mild moment condition on the weights. Moreover, an explicit error bound in the Kolmogorov-Smirnov distance is provided. 
Our proof hinges on establishing the correlation decay in the disordered model. In the constant weight case, van den Berg  \cite{vdB} introduced a geometric argument involving disagreement percolation to establish decay of correlation and applied it to deduce that the model has no phase transition. We adapt van den Berg's technique in the random weights setting to obtain the necessary correlation decay result. To show the asymptotic normality of the free energy, we then invoke a central limit result by Chatterjee \cite{chatterjee2008new} based on a generalized perturbative approach to Stein’s method.

\subsection{The model and main results}
Let $G = (V(G), E(G))$ be a finite graph. A (partial) matching $M$ of $G$ is a collection of disjoint edges from $E(G)$ such that no pair of them is incident on the same vertex. We denote the collection of all matchings on $G$ by $\mathcal{M}_G$.

We assign i.i.d.\ weights $(w_e)_{e\in E(G)}$ to the edges and i.i.d.\ weights $(\nu_x)_{x \in V(G)}$ to the vertices of $G$. The edge weights are independent of the vertex weights, but their distributions are possibly different. For $x \in V(G)$ and $M \in \mathcal{M}_G$, we write $x \not \in M$ to indicate that the vertex $x$ is unmatched in the matching $M$.  Given the weights, the disordered monomer-dimer model on $G$ is defined by 
the following random Gibbs measure $\mu_G$.
\[
\mu_G(M)  = Z_G^{-1} \exp\Big ( \beta \Big(\sum_{e \in M} w_e + \sum_{x \not\in M} \nu_x \Big)\Big), \quad M \in \mathcal{M}_G, 
\]
where $\beta > 0$ is the inverse temperature and $Z_G$ is the normalization constant, called the partition function, which is  given by
\[ 
Z_G = \sum_{M \in \mathcal{M}_G}  \exp\Big ( \beta \Big(\sum_{e \in M} w_e + \sum_{x \not\in M} \nu_x\Big)\Big). \]
Note that the inverse temperature $\beta$ can be absorbed into the weights. So, henceforth, we will assume, without loss of generality,  that $\beta = 1$. 

Our main result establishes a central limit theorem of the free energy (or pressure) of the monomer-dimer model which is defined as $F = \log{Z_G}$.  Understanding the fluctuation of the free energy is a naturally important question in disordered models in statistical physics. Bounds on the fluctuation of free energy was a key ingredient to the Aizenman-Wehr's famous proof \cite{AizenmanWehr}  of the absence of correlation in the two-dimensional random field Ising model at any field strength (see also \cite[Section~7.2]{Bovier})). Later Chatterjee proved \cite{chatterjee2019central} that the free energy of the random field Ising model with all plus or all minus boundary conditions obeys a central limit theorem at any temperature (including zero temperature) and any dimension. In \cite{wehr1990fluctuations}, Aizenman and Wehr discussed how the fluctuation of free energy yields some inequality between characteristic exponents for directed polymers in a random environment in $d$-dimensional lattice. Of course, the central limit theorem provides a more precise asymptotic of the fluctuation. The central limit theorem of the free energy has also been verified in some spin glass models at high temperature (replica-symmetric phase)  such as the Sherrington-Kirkpatrick model  \cite{ALR, comets1995sherrington} and the spherical Sherrington-Kirkpatrick model \cite{BaikLee} (and in \cite{landon} at critical temperature).


Our main result states that under the assumptions that the fourth moments of $w_e$ and $\nu_x$ are finite, $F$ obeys a Gaussian central limit theorem with an explicit rate of convergence,  as the size of the graph gets large but its maximum degree stays bounded. 
 Define the volume growth function of $G$ as 
 \begin{equation} \label{eq: volume_growth_function}\Psi_G(R) = \max_{ x \in V(G)} | V( \bB^x_R)|,  \end{equation}
where $\bB^x_R$ is the subgraph of $G$ induced by all the vertices that are at most  distance $R$  away from vertex $x$. Let $\Phi$ be the cumulative distribution function of the standard Gaussian distribution. 
\begin{thm}
	\label{thm: CLT_4_moments}
	Consider the disordered monomer-dimer model on a finite graph $G$ with maximum degree $D$. Assume that the vertex and the edge weights have finite fourth moments and the edge weights are non-degenerate (that is, the edge distribution is not a point mass). Then there exist constants $ c, C>0$, depending only on $D $ and the  distributions of the weights, 
	such that for all $R\geq 1$,
	\begin{dmath}\label{clt:error_bd}
	\sup_{s \in \R} \left|\prob\left(\frac{F - \E F}{\sqrt{\Var(F)}} \leq s \right) - \Phi(s)\right| \leq C \left (  \frac{  |V(G)|^{1/4}  }{ |E(G)|^{1/2}} \Psi_G(3R)^{1/4}  + \frac{  |V(G)|^{1/2}  }{ |E(G)|^{1/2}} e^{-cR} + \frac{  |V(G)|^{1/2}  }{\  |E(G)|^{3/4}} \right).
	\end{dmath}
	Assume further that for some constants $K_1$ and $K_2$, the weight distributions satisfy 
	\begin{itemize}
	\item[(i)] $\max( \E |w_e|^{2},  \E |\nu_x|^{2} ) \le K_1,$  
	 \item[(ii)]  $\E|w_e - w_e'| \ge K_2$ where $w_e'$ is an i.i.d.\ copy of $w_e$.
	\end{itemize}
	 Then the constant $C$ in \eqref{clt:error_bd} can be taken as  $C = C_1 (1+ \E |w_e|^4 + \E |\nu_x|^4)^{3/8}$  and the constants $c$ and $C_1$ can be chosen depending only on $D, K_1,$ and $K_2$.  
\end{thm}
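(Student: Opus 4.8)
The plan is to view $F = \log Z_G$ as a function of the $N := |E(G)| + |V(G)|$ independent weights $(w_e)_e,(\nu_x)_x$ and to apply Chatterjee's normal approximation theorem \cite{chatterjee2008new}. With $Y=(Y_i)_{i=1}^N$ the vector of weights, $Y'$ an independent copy, $Y^{(i)}$ (resp. $Y^A$) the vector with coordinate $i$ (resp. every coordinate in $A$) resampled, $\Delta_i F := F(Y)-F(Y^{(i)})$, and $T := \tfrac12\sum_i\sum_{A\not\ni i}\kappa_{N,|A|}\,\Delta_iF(Y)\,\Delta_iF(Y^A)$ the associated symmetrized double-resampling functional, the theorem bounds the Kolmogorov distance between $(F-\E F)/\sqrt{\Var F}$ and a standard Gaussian by a universal constant times
\[
\frac{\Var\!\big(\E[T\mid F]\big)^{1/4}}{\Var(F)^{1/2}} \;+\; \frac{\big(\sum_i \E|\Delta_i F|^3\big)^{1/2}}{\Var(F)^{3/4}}.
\]
The whole argument then reduces to an upper bound on the $\Delta_iF$'s, a lower bound $\Var(F)\gtrsim|E(G)|$, and an upper bound on $\Var(T)$; the only genuinely model-specific input is a decay-of-correlation estimate for the disordered model.

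\textbf{Lipschitz bound and the third-moment term.} Fixing an edge $e=\{u,v\}$ and treating the remaining weights as constants, $Z_G = C_e e^{w_e}+D_e$ with $C_e=Z_{G\setminus\{u,v\}}$ and $D_e=Z_{G-e}$ independent of $w_e$, so $\partial F/\partial w_e = \mu_G(e\in M)\in[0,1]$; similarly $\partial F/\partial\nu_x = \mu_G(x\notin M)\in[0,1]$. Hence $F$ is $1$-Lipschitz in every coordinate, $|\Delta_iF|\le|Y_i-Y_i'|$, and $\E|\Delta_iF|^3\le 8\,\E|Y_i|^3\le 8(1+\E|Y_i|^4)^{3/4}$; summing over the $|E|$ edge- and $|V|$ vertex-coordinates gives $\sum_i\E|\Delta_iF|^3\lesssim(|V|+|E|)(1+\E|w_e|^4+\E|\nu_x|^4)^{3/4}$. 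Combined with the variance lower bound this produces the last term $|V|^{1/2}/|E|^{3/4}$ of \eqref{clt:error_bd} together with the factor $(1+\E|w_e|^4+\E|\nu_x|^4)^{3/8}$.

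\textbf{Variance lower bound.} On the event $\mathcal G_e$ that $w_e$, its copy $w_e'$, and all weights within graph-distance $2$ of $e$ lie in $[-t_0,t_0]$ — of probability $\ge p_0>0$ depending, via Chebyshev and (i), only on $D$ and $K_1$ once $t_0$ is chosen large — a deletion/ratio argument bounds $D_e/C_e$ above and below by constants, so $\rho_e(s):=C_e e^s/(C_e e^s+D_e)$ is bounded away from $0$ on $[-t_0,t_0]$; since $s\mapsto\log(C_e e^s+D_e)$ has derivative $\rho_e(s)$, on the further (positive-probability, by Paley--Zygmund applied to $|w_e-w_e'|$ using (i)--(ii)) event $\{|w_e-w_e'|\ge K_2/2\}$ the increment of this function between $w_e'$ and $w_e$ is $\gtrsim K_2$. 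Hence the local free energy $F_e:=\log Z_{\bB^u_R}$ (ball of a fixed radius $R$ about an endpoint $u$ of $e$) has $\Var(F_e)\ge c_1>0$ with $c_1$ depending only on $D,K_1,K_2$. Choosing a set $S$ of $\Theta(|E|)$ edges pairwise at graph-distance $>6R$ (possible since the maximum degree is $D$) makes the $F_e$, $e\in S$, independent, and then $\Var(F)\ge\Cov\!\big(F,\sum_{e\in S}F_e\big)^2/\Var\!\big(\sum_{e\in S}F_e\big)$ together with the correlation-decay estimate below (which gives $\Cov(F,F_e)\ge c_1-O(e^{-cR})\ge c_1/2$) yields $\Var(F)\ge c\,|E(G)|$.

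\textbf{Correlation decay, the bound on $\Var(T)$, and the main obstacle.} Adapting van den Berg's disagreement-percolation coupling \cite{vdB} to random weights — the technical core — we prove that outside an event of probability $\le C|V|e^{-cR}$ (and in an $L^2$ sense for the averaged estimates used above) the model has exponential decay of correlations at rate $c$, so that each $\Delta_iF$ may be replaced by a localized increment $\Delta_i^RF$ depending only on the weights in the radius-$R$ ball about $i$, with $L^2$ error $O(e^{-cR})$ on the good event and the crude $1$-Lipschitz bound off it (its small probability absorbed by Hölder and the finite fourth moments). Since $T=\sum_iT_i$ with $T_i$ built from $\Delta_iF$, after localization $T_i^R$ and $T_j^R$ are independent whenever $\mathrm{dist}(i,j)>3R$; so $\Cov(T_i,T_j)$ is uncontrolled for at most $\Psi_G(3R)$ values of $j$ (there bounded by $\E|\Delta_iF|^4<\infty$) and is $O(e^{-cR})$ otherwise, whence $\Var(\E[T\mid F])\le\Var(T)\lesssim|V|\,\Psi_G(3R)\,(1+\E|w_e|^4+\E|\nu_x|^4)+|V|^2e^{-cR}$. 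Dividing by $\Var(F)^2\gtrsim|E|^2$ and extracting the fourth root gives the first two terms of \eqref{clt:error_bd}; assembling the three estimates and tracking the dependence of all constants on $D,K_1,K_2$ under (i)--(ii) yields the refined statement. The main obstacle, I expect, is precisely this last step: making the disagreement-percolation argument quantitative and uniform in the presence of unbounded random weights — so that the ``bad'' region where the disagreement process fails to be subcritical contributes only a genuinely small error — and then propagating the resulting localization cleanly through both Chatterjee's functional $T$ and the variance lower bound.
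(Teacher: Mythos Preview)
Your overall architecture matches the paper's: apply Chatterjee's normal approximation, control $\sum_i\E|\Delta_iF|^3$ via the $1$-Lipschitz property, lower bound $\Var(F)$ by $c\,|E(G)|$, and bound the covariance terms by approximating each $\Delta_iF$ with a localized version $\Delta_iF_{[i,R]}$ using the disagreement-percolation correlation decay adapted to random weights. The paper carries this out by expanding $\Cov(\Delta_iF\,\Delta_iF^S,\Delta_jF\,\Delta_jF^T)$ bilinearly into sixteen terms after inserting the localization, and the resulting bound on $\sum_{i,j} c(i,j)$ is exactly the one you describe.

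The gap is in your variance lower bound, specifically the claim $\Cov(F,F_e)\ge c_1-O(e^{-cR})$ with $F_e=\log Z_{\bB^u_R}$. This amounts to $|\Cov(F-F_e,F_e)|=O(e^{-cR})$. But correlation decay only controls $\partial_i(F-F_e)$ for weights $i$ well inside the ball: for an edge $f$ at distance $r$ from $\partial\bB^u_R$, the decay estimate gives $\E\big|\langle\1_{\{f\in\M_G\}}\rangle-\langle\1_{\{f\in\M_{\bB^u_R}\}}\rangle\big|\le Ce^{-cr}$, which is $O(1)$ when $f$ sits near the boundary. Since $F_e$ genuinely depends on all weights in $\bB^u_R$, including the boundary ones, their contribution to $\Cov(F-F_e,F_e)$ is of order $|\partial\bB^u_R|$, not $e^{-cR}$; on general bounded-degree graphs (expanders, regular trees) this is comparable to $|\bB^u_R|$ and can swamp your constant lower bound $c_1$ on $\Var(F_e)$. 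So the Cauchy--Schwarz step does not go through as written.

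The paper's variance lower bound (Lemma~\ref{lem: var_lower_bound}) avoids this entirely and does not use correlation decay. Via the edge-revealing martingale one has $\Var(F)\ge\tfrac12\sum_e\E\big[\E[F-F^{(e)}\mid w_e,w_e']^2\big]$. The key point is the \emph{deterministic} inequality
\[
\alpha_e/\beta_e \;\le\; e^{\nu_x+\nu_y}\prod_{f\sim e}\bigl(1+e^{w_f-\nu_{u_f}-\nu_{v_f}}\bigr),
\]
which bounds the ratio (your $D_e/C_e$) by a function of the weights within distance $1$ of $e$ only, valid for the full graph $G$ with no localization needed. This yields $\partial_{w_e}F\ge\rho_0>0$ on an event of positive probability that does not depend on $|V(G)|$, hence $\E\big[\E[F-F^{(e)}\mid w_e,w_e']^2\big]\ge c_0$ for every edge $e$, with $c_0=c_0(D,K_1,K_2)$. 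Your own $\rho_e$ computation is exactly this bound; the detour through the local free energies $F_e$ and Cauchy--Schwarz is both unnecessary and where the argument breaks.
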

Let $(G_n)_{n\geq 1}$ be a sequence of connected graphs with $|V(G_n)| \to \infty$ as $n \to \infty$. Assume that the maximum degrees of $G_n$ are bounded above by some constant $D$, which implies that $\Psi_{G_n}(R) \le CD^R$ for all $n, R \ge 1$. 
Also, the numbers of edges and vertices $G_n$ are of same order since $|V(G_n)| -1 \le |E(G_n)| \le (D/2) |V(G_n)|$. We can take $R = \sqrt{ \log |V(G_n)|}$ in \eqref{clt:error_bd}   to derive that the normalized $F_n$, the free energy associated with $\mu_{G_n}$,  converges in distribution to a standard Gaussian. To summarize, we have the following corollary.
\begin{cor}
    Let $(G_n)_{n\geq 1}$ be a sequence of finite graphs such that $|V(G_n)| \to \infty$, and such that the maximum degrees of $G_n$ are bounded above by some constant $D$. Assume that the vertex and the edge weights have finite fourth moment and the edge weights are non-degenerate. Writing $F_n$ for the free energy associated to $\mu_{G_n}$, one has
    \[
	\frac{F_n - \E F_n}{\sqrt{\Var(F_n)}}  \stackrel{d}{\to} N(0,1) \quad \text{as $n \to\infty$.}
	\]
    Here, $\stackrel{d}{\to}$ denotes the convergence in distribution.
\end{cor}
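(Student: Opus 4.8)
The plan is to read off the corollary from Theorem~\ref{thm: CLT_4_moments} by choosing the scale $R$ as a slowly growing function of $|V(G_n)|$ and letting $n\to\infty$. First I would record the two elementary facts already noted above: the bounded-degree hypothesis gives $\Psi_{G_n}(R)\le CD^{R}$ for all $R\ge 1$, and $|V(G_n)|-1\le|E(G_n)|\le (D/2)|V(G_n)|$, so $|E(G_n)|$ and $|V(G_n)|=:N$ are of the same order and both tend to infinity. I would also observe that the left-hand side of \eqref{clt:error_bd} is well defined because the non-degeneracy of the edge weights forces $\Var(F_n)>0$ for every $n$.

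Next, for each sufficiently large $n$ I would apply Theorem~\ref{thm: CLT_4_moments} with $R=R_n:=\lceil\sqrt{\log N}\,\rceil\ge 1$. Substituting $\Psi_{G_n}(3R_n)\le CD^{3R_n}$ and $|E(G_n)|\asymp N$, the right-hand side of \eqref{clt:error_bd} is controlled, up to constants depending only on $D$ and the weight laws, by
\[
N^{-1/4}D^{(3/4)R_n}+e^{-cR_n}+N^{-1/4}.
\]
The last summand obviously vanishes; so does the middle one, since $R_n\to\infty$. For the first, I would write $N^{-1/4}D^{(3/4)R_n}=\exp\!\big(-\tfrac14\log N+\tfrac34(\log D)R_n+O(1)\big)$ and note that with $R_n=O(\sqrt{\log N})$ the linear term $-\tfrac14\log N$ dominates the $O(\sqrt{\log N})$ correction, so this term also tends to $0$.

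Combining these, the Kolmogorov--Smirnov distance between the law of $(F_n-\E F_n)/\sqrt{\Var(F_n)}$ and the standard Gaussian tends to $0$, and since $\Phi$ is continuous this is equivalent to convergence in distribution, which gives the claim. There is no real obstacle here: all the substance sits in Theorem~\ref{thm: CLT_4_moments}. The only point requiring a little care is the choice of $R_n$ — it must grow ($R_n\to\infty$) to kill the correlation-decay term $e^{-cR_n}$, yet grow slowly enough ($R_n=o(\log N)$) that the polynomial gain $N^{-1/4}$ in the first term is not swamped by the volume factor $D^{3R_n}$; $R_n\asymp\sqrt{\log N}$ lies comfortably in this window.
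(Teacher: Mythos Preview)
Your proposal is correct and follows exactly the same route as the paper: use the bounded-degree bound $\Psi_{G_n}(R)\le CD^R$ and $|E(G_n)|\asymp |V(G_n)|$, then plug $R=\sqrt{\log|V(G_n)|}$ into \eqref{clt:error_bd} and check that each of the three terms vanishes. Your write-up merely adds a little more detail on why each term tends to zero and on the admissible range of $R_n$.
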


In fact, the fourth moment assumption can be weakened when the volume growth of $G_n$ is not too fast. To be more precise, if we further assume that the sequence of graphs $(G_n)_{n \ge 1} $ has uniformly sub-exponential volume growth, that is, for any $\alpha > 0$, there exists $K$ such that
 \[
	\Psi_{G_n} (R) \leq K \exp(\alpha R) \text{ for all } n, R \ge 1,
	\]
then we are able to prove a Gaussian central limit theorem  under a nearly-optimal finite $(2+\eps)$-moment assumption. 
\begin{thm}
	\label{thm: CLT_two_moments}
	Let $(G_n)_{n \geq 1}$ be a sequence of finite graphs  such that  $|V(G_n)| \to \infty$ as $n \to \infty$   and $|E(G_n)| \ge \delta |V(G_n)|$ for all $n$  large enough for some positive constant $\delta$. Furthermore, assume that the maximum degrees of $(G_n)_{n \ge 1} $ are bounded above by some constant $D$ and that  $(G_n)_{n \ge 1} $ has uniformly sub-exponential volume growth. Suppose that $\E |w_e|^{2+\eps} + \E |\nu_x|^{2+\eps} <\infty$ for some $\eps > 0$ and $w_e$ is non-degenerate.
	Then
	\[
	\frac{F_n - \E F_n}{\sqrt{\Var(F_n)}}  \stackrel{d}{\to} N(0,1) \quad \text{as $n \to\infty$,}
	\]
	where $F_n$ is the free energy associated with $\mu_{G_n}$.  
\end{thm}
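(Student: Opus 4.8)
\emph{The plan} is to truncate the weights at a slowly growing level, apply Theorem~\ref{thm: CLT_4_moments} to the truncated model, and argue that the truncation perturbs $F_n$ by an amount negligible on the scale of its fluctuations. We may assume $\eps\in(0,2)$, since $\eps\ge2$ gives a finite fourth moment and Theorem~\ref{thm: CLT_4_moments} applies directly. Write $N_n=|V(G_n)|$; then $|V(G_n)|\asymp|E(G_n)|\asymp N_n$ because $N_n\le|E(G_n)|\le(D/2)N_n$, and we use throughout that $\Var(F_n)\asymp N_n$ (Efron--Stein for the upper bound; the lower bound rests on the non-degeneracy of the edge weights). We fix $\gamma\in\bigl(0,\tfrac{2}{3(2-\eps)}\bigr)$, set $T_n=N_n^{\gamma}$, and let $\tilde w_e=(w_e\wedge T_n)\vee(-T_n)$, $\tilde\nu_x=(\nu_x\wedge T_n)\vee(-T_n)$, with $\tilde F_n$ the corresponding free energy.

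\emph{Applying Theorem~\ref{thm: CLT_4_moments} to the truncated model.} Since $\tilde w_e\to w_e$ and $\tilde\nu_x\to\nu_x$ in $L^1$, for all large $n$ the truncated weights are non-degenerate and satisfy conditions (i)--(ii) with $K_1=\E w_e^2+\E\nu_x^2$ and $K_2=\tfrac12\E|w_e-w_e'|$, so the constants $c,C_1$ in \eqref{clt:error_bd} for the truncated model are absolute. By Hölder, $\E|\tilde w_e|^4+\E|\tilde\nu_x|^4\lesssim T_n^{2-\eps}\bigl(\E|w_e|^{2+\eps}+\E|\nu_x|^{2+\eps}\bigr)$, hence the prefactor in \eqref{clt:error_bd} is $C\lesssim T_n^{3(2-\eps)/8}=N_n^{3\gamma(2-\eps)/8}$, with exponent strictly below $1/4$. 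Using the uniformly subexponential growth, $\Psi_{G_n}(3R)^{1/4}\le K_\alpha^{1/4}e^{3\alpha R/4}$ for any $\alpha>0$; choosing $\alpha$ small and then $R=R_n=c^{-1}\log N_n$, the three terms in \eqref{clt:error_bd} are each bounded by a fixed negative power of $N_n$. Thus $(\tilde F_n-\E\tilde F_n)/\sqrt{\Var\tilde F_n}\stackrel{d}{\to}N(0,1)$.

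\emph{The truncation error} is the heart of the argument: we must show $\Var(F_n-\tilde F_n)=o(N_n)$. As $\log Z_G$ is $1$-Lipschitz in every weight, with $\partial_{w_e}\log Z_G=\mu_G(e\in M)$ and $\partial_{\nu_x}\log Z_G=\mu_G(x\notin M)$, Efron--Stein gives $\Var(F_n-\tilde F_n)\le\tfrac12\sum_v\E\bigl[(g-g^{v})^2\bigr]$, the sum over the $\asymp N_n$ edges and vertices $v$, where $g=F_n-\tilde F_n$ and $g^v$ resamples the weight at $v$. We fix a localization radius $R'=R'_n=A\log N_n$ ($A$ a large constant) and split the contribution of $v$ into: (a) the weight at $v$, or its resampled copy, has absolute value $>T_n$; (b) neither does, but some weight in $\bB^v_{R'}$ does; (c) neither does, no weight in $\bB^v_{R'}$ does, and the high-probability correlation-decay event holds around $v$ at scale $R'$; (d) as (c) but correlation decay fails. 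In (a), (b), (d) we use $|g-g^v|\le|w_v-w_v'|$ together with $\E[w_v^2\mathbf{1}_{|w_v|>T_n}]\lesssim T_n^{-\eps}$, the union bound $\prob(\exists\text{ weight}>T_n\text{ in }\bB^v_{R'}\setminus\{v\})\lesssim\Psi_{G_n}(R')T_n^{-(2+\eps)}$, and Hölder with conjugate exponents $\tfrac{2+\eps}{2},\tfrac{2+\eps}{\eps}$ against the exponentially small probability that correlation decay fails at $v$. In case (c), $\mu_{G_n}$ and $\tilde\mu_{G_n}$ agree on all weights in $\bB^v_{R'}$ and differ only outside it, so correlation decay yields $|\partial_{w_v}g|\lesssim e^{-cR'}$ uniformly over the bounded range of the weight at $v$, whence $|g-g^v|\le|w_v-w_v'|\,O(e^{-cR'})$. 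Summing, $\Var(F_n-\tilde F_n)\lesssim N_n\bigl(T_n^{-\eps}+\Psi_{G_n}(R')T_n^{-(2+\eps)}+e^{-2cR'}+e^{-c''R'}\bigr)$ for some $c''>0$, and taking the subexponential-growth exponent small enough that $\Psi_{G_n}(A\log N_n)\le N_n^{\gamma(2+\eps)/2}$ makes every bracket term a fixed negative power of $N_n$; thus $\Var(F_n-\tilde F_n)=o(N_n)=o(\Var F_n)$.

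\emph{Conclusion and main difficulty.} From $\Var(F_n-\tilde F_n)=o(\Var F_n)$ we get $\bigl[(F_n-\tilde F_n)-\E(F_n-\tilde F_n)\bigr]/\sqrt{\Var F_n}\to0$ in $L^2$, and expanding $\Var F_n=\Var\tilde F_n+\Var(F_n-\tilde F_n)+2\Cov(\tilde F_n,F_n-\tilde F_n)$ with Cauchy--Schwarz gives $\Var\tilde F_n/\Var F_n\to1$. Writing
\[
\frac{F_n-\E F_n}{\sqrt{\Var F_n}}=\frac{\tilde F_n-\E\tilde F_n}{\sqrt{\Var\tilde F_n}}\cdot\sqrt{\frac{\Var\tilde F_n}{\Var F_n}}+\frac{(F_n-\tilde F_n)-\E(F_n-\tilde F_n)}{\sqrt{\Var F_n}},
\]
Slutsky's theorem yields $(F_n-\E F_n)/\sqrt{\Var F_n}\stackrel{d}{\to}N(0,1)$. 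The one genuinely delicate step will be case (c): one must invoke the correlation-decay estimate \emph{uniformly over all modifications of the weights outside $\bB^v_{R'}$} — notably over replacing a distant weight of arbitrarily large magnitude by $\pm T_n$ — with a rate $c$ that does not degrade as $T_n\to\infty$. This robustness is precisely what van den Berg's disagreement-percolation argument~\cite{vdB} supplies: the far field affects the marginal at $v$ only through the probability of a disagreement path reaching $v$, and the law of that path depends only on the (untruncated) weights inside $\bB^v_{R'}$, so the bound is insensitive both to the far-away changes and to the truncation level.
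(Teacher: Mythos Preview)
Your proof is correct and follows the same overall strategy as the paper: truncate the weights at a slowly growing level, apply Theorem~\ref{thm: CLT_4_moments} to the truncated model, show the truncation perturbs the centered free energy by $o(\sqrt{N_n})$ in $L^2$, and conclude via Slutsky. The difference lies in how you bound $\Var(F_n-\tilde F_n)$. The paper interpolates via $w_e^t = w_e\mathbf{1}_{|w_e|\le L} + t\,w_e\mathbf{1}_{|w_e|>L}$, writes $\overline{F_{n,1}}-\overline{F_{n,0}}=\int_0^1\sum_e\overline{\langle\mathbf{1}_{e\in\M}\rangle_{n,t}\,w_e\mathbf{1}_{|w_e|>L}}\,\mathrm{d}t$ (plus the vertex analogue), and then replaces each $\langle\mathbf{1}_{e\in\M}\rangle_{n,t}$ by its local approximation on $\bB^e_R$ via Lemma~\ref{lem: corr_decay_Lp}; the resulting sum has independent summands for well-separated edges, and the cross terms vanish. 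You instead apply Efron--Stein to $g=F_n-\tilde F_n$ and run a four-case analysis on each increment $g-g^v$, using the pointwise version of correlation decay (Proposition~\ref{thm: corr_decay}) rather than its $L^p$ consequence. Your case (c) is exactly where the robustness of the disagreement-percolation argument is needed, and you identify this correctly: the good event $A$ and the deterministic bound on $A$ depend only on the weights inside $\bB^v_{R'}\setminus\{v\}$, so they are insensitive to truncation of the far field and to the value of $w_v$ itself. The paper's interpolation is a bit more streamlined (no case split), while your approach is more elementary and makes the role of the localization event more explicit; both feed on the same correlation-decay input. One small slip: you write $N_n\le|E(G_n)|$, but the hypothesis only gives $|E(G_n)|\ge\delta N_n$; this is harmless since you only use $|E(G_n)|\asymp N_n$.
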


\begin{rem}
The proof of Theorem~\ref{thm: CLT_two_moments} will show that the hypothesis of uniform subexponential volume growth can be replaced by the following weaker assumption. There
 exists $K$ such that
 \[
\Psi_{G_n} (R) \leq K \exp(\eps R/20) \text{ \ for all } n, R \ge 1.
\]
In the above,  $\eps$ is the same as one that appears in the moment condition $\E |w_e|^{2+\eps} + \E |\nu_x|^{2+\eps} <\infty$.
\end{rem}
The most important examples covered by the above theorem are the growing boxes of $\mathbb{Z}^d$ for any $d \ge 1$, which trivially satisfies the uniform subexponential volume growth assumption. 

In addition, we are also able to show an ``annealed'' central limit theorem for the number of dimers in the matching. Denote by $\la \cdot \ra$ the Gibbs expectation with respect to $\mu_G$, and let $\M_G$ be a random matching of $G$ sampled according to $\mu_G$.

For positive constants $\eps$ and $A$, define the following class of finite graphs
\[ \mathsf{Growth}(A, \eps) = \{ G: \Psi_G(R) \le A \exp(\eps R) \text{ for all } R \ge 1 \}.  \]

\begin{thm}
    \label{thm: no_of_dimers}
    Consider the disordered monomer-dimer model on a finite graph $G$ such that the vertex and the edge weights have finite fourth moments.  Assume that  maximum degree of $G$ is bounded above by $D$ and $G$ satisfies $|E(G)| \ge \delta | V(G)|$. Moreover, assume that there is  $c_0 > 0$ such that 
    \[ \Var(\la |\M_G| \ra) \geq c_0 |E(G)|.\]
     Then there exists $\eps >0$ that depends only on $D$ and the weight distributions such that for any $G \in \mathsf{Growth}(A, \eps)$ that also satisfies the above conditions the following holds. We can find constants $ c, C>0$ that depend only on $D, \delta, A$ and the weight distributions  such that  for all $R\geq 1$,
    \begin{dmath*}
	\sup_{s \in \R} \left|\prob\left(\frac{\la |\M_G| \ra - \E \la |\M_G| \ra}{\sqrt{\Var(\la |\M_G| \ra)}} \leq s \right) - \Phi(s)\right| \\
    \leq  Cc_0^{-1/2} \left( e^{-cR}  + |E(G)|^{-1/4}|\Psi_G(R)|^{2}\right) + C c_0^{-3/4}|E(G)|^{-1/4}. 
	\end{dmath*}
\end{thm}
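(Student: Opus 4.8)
\emph{Proof proposal.} Write $Y := \la |\M_G| \ra = \sum_{e \in E(G)} \mu_G(e \in \M_G)$, regarded as a (random) function of the independent weights $(w_e)_{e \in E(G)}$ and $(\nu_x)_{x \in V(G)}$; since $\delta|V(G)| \le |E(G)| \le (D/2)|V(G)|$ there are $\Theta(|E(G)|)$ such coordinates and $|V(G)| \asymp |E(G)|$. The plan is to repeat the proof of Theorem~\ref{thm: CLT_4_moments} with $F = \log Z_G$ replaced by $Y$: we apply Chatterjee's normal-approximation inequality \cite{chatterjee2008new} to $Y$, using that the first and second partial derivatives of $Y$ in the weights — hence the first- and second-order resampling differences that enter Chatterjee's bound — are expressible through Gibbs cumulants of the occupation variables $\1_{e\in\M_G}$, $\1_{x\notin\M_G}$, which obey the decay of correlation developed in this paper via the disagreement-percolation comparison (our random-weight adaptation of van den Berg \cite{vdB}).

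The relevant identities are $\partial_{w_f}Y = \gCov(|\M_G|,\1_{f\in\M_G}) = \sum_{e\in E(G)}\gCov(\1_{e\in\M_G},\1_{f\in\M_G})$ and $\partial_{\nu_x}Y = \gCov(|\M_G|,\1_{x\notin\M_G})$, while second derivatives such as $\partial_{w_g}\partial_{w_f}Y$ equal $\sum_{e}\kappa_3(\1_{e},\1_f,\1_g)$, a sum over $e$ of third connected Gibbs cumulants, and so on. The correlation-decay result supplies, for each $R \ge 1$, a high-probability event $\mathcal{G}_R$ on the disorder — morally, that the disagreement percolation has no large supercritical defect cluster within distance $\asymp R$ of any vertex, which fails only on an event exponentially unlikely in $R$ — on which every $k$-point connected cumulant of occupation variables decays exponentially in the pairwise graph distances of the points. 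This is exactly where the hypothesis $G\in\mathsf{Growth}(A,\eps)$ with $\eps$ \emph{small} (below the decay rate $c$) is used: it forces the sums over $e\in E(G)$ defining $\partial Y$ and $\partial^2 Y$ to converge, on $\mathcal{G}_R$, to quantities of size $O(\Psi_G(R))$ and $O(\Psi_G(R)^2)$, rather than growing with the graph. Consequently, on $\mathcal{G}_R$ one has $\|\nabla Y\|_\infty \lesssim \Psi_G(R)$, hence $\|\nabla Y\|_2^2 \lesssim |E(G)|\Psi_G(R)^2$ and (by a Schur test on the rows) $\|\nabla^2 Y\|_{\mathrm{op}} \lesssim \Psi_G(R)^2$; moreover each resampling difference obeys $|\Delta_i Y| \lesssim \Psi_G(R)|w_i - w_i'|$ on $\mathcal{G}_R$ (integrating the directional derivative along the resampling path) and $|\Delta_i Y| \le |V(G)|$ always. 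One checks that $\mathcal{G}_R$ can be taken stable under resampling a single weight, which holds once some slack is built into its definition.

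With these inputs, apply Chatterjee's inequality to $Y$ precisely as to $F$ in Theorem~\ref{thm: CLT_4_moments}: each expectation in the bound — the interaction sum built from first- and second-order resampling differences, and the third-moment term involving $\E|\Delta_iY|^3$ — is split over $\mathcal{G}_R$ and $\mathcal{G}_R^c$. On $\mathcal{G}_R$ one uses the $\Psi_G(R)$-polynomial bounds together with the finite fourth moments $\E|w_e|^4, \E|\nu_x|^4 < \infty$; on $\mathcal{G}_R^c$ one uses the trivial bound $|\Delta_iY| \le |V(G)|$ against $\prob(\mathcal{G}_R^c)$ via Cauchy--Schwarz. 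Dividing by the appropriate powers of $\Var(Y)$ and inserting $\Var(Y)\ge c_0|E(G)|$ (and $|V(G)|\asymp|E(G)|$) collects the estimate into the three stated terms: the $e^{-cR}$ term from the $\mathcal{G}_R^c$ contribution, the $c_0^{-1/2}|E(G)|^{-1/4}\Psi_G(R)^2$ term from the interaction/Hessian contribution, and $c_0^{-3/4}|E(G)|^{-1/4}$ from the third-moment term; the precise exponents of $\Psi_G(R)$, $|E(G)|$ and $c_0$ follow by tracking these bounds through Chatterjee's inequality exactly as in Theorem~\ref{thm: CLT_4_moments}, and the constants depend only on $D,\delta,A$ and the weight distributions. (Choosing, say, $R\asymp\log|E(G)|$ then makes the bound tend to $0$, which is the use one has in mind.)

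The crux is the correlation-decay input for the \emph{higher-order} Gibbs cumulants of occupation variables — one order beyond the free-energy case, since here the gradient of $Y$ is already a sum of two-point covariances and the Hessian a sum of three-point cumulants — and establishing it uniformly over the disorder on an event $\mathcal{G}_R$ that is exponentially likely and robust under single-coordinate resampling; this is the heart of adapting van den Berg's geometric argument to random weights, and the reason $\eps$ must be small relative to the decay rate. A secondary point, visible already in the hypotheses, is that $c_0$ genuinely enters: unlike $F$, whose variance is of order $|E(G)|$ automatically once the edge weights are non-degenerate (condition (ii) of Theorem~\ref{thm: CLT_4_moments}), here the lower bound $\Var(\la|\M_G|\ra) \ge c_0|E(G)|$ must be assumed and the estimate degrades in $c_0$. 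The remaining ingredients — the differentiation identities, the Schur-test bounds, and the choice of cutoff $R$ — are routine.
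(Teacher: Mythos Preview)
The proposal has the right high-level framework—Chatterjee's normal approximation applied to $\Lambda = \langle|\M_G|\rangle$, with correlation decay controlling the discrete derivatives—but the implementation diverges from the paper's in two ways, and the second is a genuine gap.

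First, you bring in second derivatives, third Gibbs cumulants, and the Hessian operator norm $\|\nabla^2 Y\|_{\mathrm{op}}$, none of which appear in the argument. Chatterjee's Theorem~\ref{lem: chatterjee} requires only bounds on $\Cov(\Delta_i g \Delta_i g^S, \Delta_j g \Delta_j g^T)$—covariances of \emph{products of first-order} resampling differences—and on the third moments $\E|\Delta_i g|^3$. No second-order differences enter. The paper handles the covariance term exactly as in the proof of Theorem~\ref{thm: CLT_4_moments}: it constructs an explicit local approximation $\Delta_e\Lambda_{[e,R]}$ (the Gibbs covariance on $\bB^e_{2R}$, summed only over $f\in E(\bB^e_R)$), bounds $\|\Delta_e\Lambda - \Delta_e\Lambda_{[e,R]}\|_4 \le C\|w_e\|_4\, e^{-cR}$ using Proposition~\ref{prop:ee_vv_local} and Corollary~\ref{prop: corr_decay_stronger}, and then uses that the local versions are independent once $\mathrm{dist}(e,e') > 4R+1$.

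Second—and this is where your approach fails to recover the stated bound—you work via a good-event decomposition, claiming $|\Delta_iY| \lesssim \Psi_G(R)\,|w_i-w_i'|$ on $\mathcal{G}_R$ and $|\Delta_iY| \le |V(G)|$ on $\mathcal{G}_R^c$. This misses a key point: the paper obtains $\|\Delta_e\Lambda\|_p \le C\|w_e\|_p$ with \emph{no} dependence on $R$, because the $L^p$ two-point decay $\|X_{f,e}\|_p \le C_*\, e^{-c_*\,\mathrm{dist}(e,f)}$ from Corollary~\ref{prop: corr_decay_stronger} sums over $f$ to a constant under the hypothesis $G\in\mathsf{Growth}(A,c_*/5)$. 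That is what yields the clean third-moment term $Cc_0^{-3/4}|E(G)|^{-1/4}$ with no $\Psi_G(R)$ factor. Your $\Psi_G(R)$ bound on $|\Delta_iY|$ would instead give $\sum_i\E|\Delta_iY|^3 \lesssim |E(G)|\,\Psi_G(R)^3$ on $\mathcal{G}_R$, hence a third-moment contribution of order $c_0^{-3/4}|E(G)|^{-1/4}\Psi_G(R)^{3/2}$, which does not match the theorem. Likewise the $\mathcal{G}_R^c$ contribution, using the trivial bound $|V(G)|$, picks up polynomial factors in $|E(G)|$ that can only be absorbed into $e^{-cR}$ for $R$ at least logarithmic in $|E(G)|$, whereas the stated inequality is asserted for all $R\ge 1$. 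The ``stability of $\mathcal{G}_R$ under single-coordinate resampling'' that you flag is also insufficient: $\Delta_i g^S$ involves resampling an arbitrary subset $S$, not a single coordinate.

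In short: drop the good-event splitting and the Hessian/third-cumulant digression, and work directly in $L^p$ as in Theorem~\ref{thm: CLT_4_moments}. The new ingredient is that the integrand of $\Delta_e\Lambda$ is already a sum $\sum_f \gCov(\1_{e\in\M_G},\1_{f\in\M_G})$; the paper splits this into a far piece ($f\notin E(\bB^e_R)$, controlled by Corollary~\ref{prop: corr_decay_stronger}), a local piece on $\bB^e_{2R}$ (bounded trivially by $|E(\bB^e_R)|\le C\Psi_G(R)$), and a localization error (replacing $G$ by $\bB^e_{2R}$, controlled by Proposition~\ref{prop:ee_vv_local}). The growth parameter $\eps = c_*/5$ is used repeatedly to turn exponential decay into summability.
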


It was shown in \cite[Corollary~5.10]{DeyKrishnan} using Heilmann-Lieb theory on the properties of the roots of the matching polynomials
that if the support of $w_e$ is contained in $[a,b]$ and the support of $\nu_x$ is contained in $[c,d]$, and if $b - 2c < - \log{D}$, then there exists $c_0 > 0$ (depending only on the weight distributions and $D$) such that $\Var(\la |\M_G| \ra) \geq c_0 |E(G)|$. Though the authors only considered the case that the graph $G$ is a cylinder graph,  the proof can be applied to general bounded degree graphs. 
Using a completely different technique, we provide a similar lower bound for Gaussian edge weights and general vertex weights on a general bounded degree graph.
\begin{prop}
\label{prop: var_lower_bound_dimers}
Assume that edge weights are i.i.d.\ standard Gaussians and the vertex weights are i.i.d.\ from any distribution. Then there exists 
a constant $c> 0$, depending only on $D$ and the vertex distribution, such that 
\[ \Var(\la |\M_G| \ra)  \ge c |E(G)|.\]
\end{prop}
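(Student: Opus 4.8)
The plan is to use a route completely different from the Heilmann--Lieb root analysis of \cite{DeyKrishnan}: exploit the Gaussianity of the edge weights through an integration-by-parts identity to reduce the claim to a lower bound on the \emph{quenched} variance $\Var_{\mu_G}(|\M_G|)$, and then produce that bound by decomposing $G$ into matchings and performing a local surgery.

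\textbf{Step 1: reduction via Gaussian integration by parts.} Set $X=\la |\M_G|\ra=\sum_{e\in E(G)}\la \1[e\in\M_G]\ra$; this is a bounded smooth function of the i.i.d.\ standard Gaussians $(w_e)_{e\in E(G)}$ and of the (independent) vertex weights. Since $\E[w_e]=0$, Cauchy--Schwarz for covariances gives
\[
\Var(X)\ \ge\ \frac{\Cov\big(X,\ \textstyle\sum_{e}w_e\big)^2}{\Var\big(\sum_e w_e\big)}\ =\ \frac{1}{|E(G)|}\Big(\sum_{e}\E[X w_e]\Big)^2 .
\]
By the Gaussian integration-by-parts formula $\E[X w_e]=\E[\partial_{w_e}X]$, and a direct computation gives $\partial_{w_e}\la|\M_G|\ra=\Cov_{\mu_G}(|\M_G|,\1[e\in\M_G])$ (both $X$ and this derivative are bounded, so the identity is legitimate). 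Summing over $e$ and using $\sum_e\1[e\in\M_G]=|\M_G|$ yields $\sum_e\E[X w_e]=\E\big[\Var_{\mu_G}(|\M_G|)\big]$. Hence it suffices to show $\E[\Var_{\mu_G}(|\M_G|)]\ge c'|E(G)|$ for a constant $c'$ depending only on $D$ and the vertex law, after which $\Var(X)\ge c'^2|E(G)|$.

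\textbf{Step 2: lower bounding $\Var_{\mu_G}(|\M_G|)$ via an edge colouring, and $\pi_f$ by surgery.} By Vizing's theorem, partition $E(G)=F_1\sqcup\cdots\sqcup F_{D+1}$ into matchings. Fix $j$ and condition $\mu_G$ on the restriction of the random matching to the edges outside $F_j$. Because $F_j$ is a matching, given this information the indicators $(\1[f\in\M_G])_{f\in F_j}$ are independent: an edge $f=(a,b)\in F_j$ can lie in $\M_G$ only if neither $a$ nor $b$ is matched by an edge outside $F_j$, and then it lies in $\M_G$ with conditional probability $\sigma(w_f-\nu_a-\nu_b)$, where $\sigma(t):=e^t/(1+e^t)$. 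The law of total variance then gives, for every $j$,
\[
\Var_{\mu_G}(|\M_G|)\ \ge\ \sum_{f=(a,b)\in F_j}\pi_f\,\sigma'(w_f-\nu_a-\nu_b),\qquad \pi_f:=\mu_G\big(a,b\text{ not matched by an edge outside }F_j\big),
\]
and averaging over $j=1,\dots,D+1$ yields $\Var_{\mu_G}(|\M_G|)\ \ge\ \tfrac{1}{D+1}\sum_{f=(a,b)\in E(G)}\pi_f\,\sigma'(w_f-\nu_a-\nu_b)$. To control $\pi_f$, note it dominates $\mu_G(a,b\text{ unmatched})$, and for the latter I would use the map that deletes from a matching every edge incident to $a$ or $b$: it is at most $(D+1)^2$-to-one onto the set of matchings in which $a,b$ are unmatched, and it changes the energy by at most $S_f:=\sum_{g\ni a\text{ or }g\ni b}|w_g|+\sum_{x\in\{a,b\}\cup N(a)\cup N(b)}|\nu_x|$ in absolute value; comparing partition functions gives $\pi_f\ge (D+1)^{-2}e^{-S_f}$.

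\textbf{Step 3: taking expectations.} Combining the previous bounds, $\E[\Var_{\mu_G}(|\M_G|)]\ge (D+1)^{-3}\sum_{f=(a,b)}\E\big[e^{-S_f}\sigma'(w_f-\nu_a-\nu_b)\big]$. Using $\sigma'(t)\ge c_1 e^{-|t|}$ for a universal $c_1>0$ (indeed $c_1=\tfrac14$ works) one has $e^{-S_f}\sigma'(w_f-\nu_a-\nu_b)\ge c_1 e^{-2S_f}$, and $S_f$ is a sum of at most $2D$ of the $|w_g|$'s and at most $2D+2$ of the $|\nu_x|$'s; by independence and the i.i.d.\ structure within each family, $\E[e^{-2S_f}]\ge a_1^{2D}a_2^{2D+2}$ with $a_1:=\E[e^{-2|w_e|}]\in(0,1)$ universal and $a_2:=\E[e^{-2|\nu_x|}]\in(0,1]$ depending only on the vertex distribution (and strictly positive). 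Hence each summand is $\ge c_1 a_1^{2D}a_2^{2D+2}=:c_2>0$, so $\E[\Var_{\mu_G}(|\M_G|)]\ge (D+1)^{-3}c_2|E(G)|$, and Step 1 gives $\Var(\la|\M_G|\ra)\ge (D+1)^{-6}c_2^{2}|E(G)|$, as claimed. The delicate point I expect is Steps 2--3: the edge colouring isolates, inside $\Var_{\mu_G}(|\M_G|)$, a sum of conditional variances ranging over \emph{all} edges, but each term carries the factor $\pi_f$, which for a fixed realization of the disorder can be genuinely tiny (when edges near $f$ are strongly favoured); the fix is to lower bound $\pi_f$ not pointwise but only in expectation, by charging it the cost $e^{-S_f}$ of a surgery localized near $f$, so that the bounded-degree hypothesis keeps the resulting expectation bounded below by a constant depending only on $D$ and the vertex law.
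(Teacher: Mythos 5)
Your argument is correct, and its overall architecture matches the paper's: your Step~1 (covariance with $\sum_e w_e$ plus Gaussian integration by parts) is literally the paper's reduction, which truncates the Hermite expansion of the conditional variance given the vertex weights at the first chaos and applies Cauchy--Schwarz to reach $\Var(\la|\M_G|\ra)\ \ge\ |E(G)|^{-1}\big(\E\,\gVar(|\M_G|)\big)^2$; and your Step~2 shares the two central ingredients of the paper's second half, namely Vizing's theorem and the observation that, conditionally on $\M_G$ outside a matching $F$, the indicators $(\1_{\{f\in\M_G\}})_{f\in F}$ are independent Bernoullis with parameter $\sigma(\tilde w_f)$, $\tilde w_f = w_f-\nu_a-\nu_b$. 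Where you genuinely diverge is in how the resulting sum of conditional variances is bounded below. The paper restricts attention to the set $E_K$ of edges with gauge weight $\tilde w_e<K$, uses $1-p_e\ge(1+e^K)^{-1}$ there to reduce the problem to a linear lower bound on $\gE|\M_G\cap E_K|$, and obtains that bound by conditioning on $\M_G$ outside the star of $e$ and working on the event that $\tilde w_e$ is the largest gauge weight in its neighbourhood and lies in $[-K,K]$, with $K$ chosen large depending on the vertex law. You instead keep every edge, retain the full term $\pi_f\,\sigma'(\tilde w_f)$, lower bound $\pi_f$ pathwise by the deletion surgery at cost $e^{-S_f}$ (the $(D+1)^2$-to-one counting and the energy estimate are both sound), and only then take expectations, using that $\E e^{-2|w_e|}$ and $\E e^{-2|\nu_x|}$ are strictly positive for any laws. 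Your route avoids choosing a truncation level $K$ and produces fully explicit constants; the paper's route stays closer to Kahn's argument for unweighted matchings and isolates the cleaner intermediate inequality $\gVar(|\M_G|)\ge c\,\gE|\M_G\cap E_K|$. Both yield a constant depending only on $D$ and the vertex distribution, as required.
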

We can apply Theorem~\ref{thm: no_of_dimers} and Proposition~\ref{prop: var_lower_bound_dimers}  to a sequence of bounded degree graphs $(G_n)_{n \ge 1}$ with uniformly sub-exponential volume growth when the edge weight distribution is Gaussian.
Note that uniformly sub-exponential volume growth implies that for any $\eps>0$, there exists a finite constant $A$ such that 
$G_n \in \mathsf{Growth}(A, \eps)$ for all $n \ge 1$. If we take, for example, $R = \sqrt{ \log |V(G_n)|}$, the bound of Theorem~\ref{thm: no_of_dimers} goes to zero as $n \to \infty$. Therefore, we obtain the following corollary. 
\begin{cor}\label{cor:clt_dimer}
    Let $(G_n)_{n \geq 1}$ be a sequence of finite graphs such that  $|V(G_n)| \to \infty$ as $n \to \infty$ and $|E(G_n)| \ge \delta |V(G_n)|$ for all $n$ large enough for some positive constant $\delta$. Assume that the maximum degrees of $(G_n)_{n \ge 1} $ are bounded above by some constant $D$ and that  $(G_n)_{n \ge 1} $ has uniformly sub-exponential volume growth. Furthermore, assume that the edge weights are i.i.d.\ standard Gaussian random variables and that vertex weight distribution has finite fourth moment. Then
    \[
        \frac{\la|\M_{G_n}| \ra_n - \E \la |\M_{G_n}| \ra_n}{\sqrt{\Var(\la |\M_{G_n}| \ra_n)}} \stackrel{d}{\to} N(0,1) \quad \text{as $n \to\infty$,}
    \]
    where $\la \cdot \ra_n$ is the Gibbs expectation with respect to $\mu_{G_n}$.
\end{cor}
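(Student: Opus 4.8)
The idea is to deduce Corollary~\ref{cor:clt_dimer} by feeding Proposition~\ref{prop: var_lower_bound_dimers} into Theorem~\ref{thm: no_of_dimers} and then optimizing the free parameter $R$; the substantive work having already been done in those two statements, what remains is to match up the hypotheses and carry out a short asymptotic estimate. First, let $\eps>0$ be the constant furnished by Theorem~\ref{thm: no_of_dimers}. It depends only on $D$ and the weight distributions, so for our fixed choice of laws (standard Gaussian edge weights and a fixed vertex law with finite fourth moment) it is a single number, valid for the whole sequence. Applying the uniform subexponential volume growth hypothesis to \emph{this particular} $\eps$, we obtain a finite constant $A$ with $\Psi_{G_n}(R)\le A\exp(\eps R)$ for all $n,R\ge 1$, i.e.\ $G_n\in\mathsf{Growth}(A,\eps)$ for every $n$.

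Next I would check the remaining hypotheses of Theorem~\ref{thm: no_of_dimers} for each $G_n$ (with $n$ large): the maximum degree is at most $D$ and $|E(G_n)|\ge\delta|V(G_n)|$ by assumption; the edge weights, being standard Gaussian, have finite fourth moment, and the vertex weights have finite fourth moment by hypothesis; and, crucially, since the edge weights are i.i.d.\ standard Gaussian, Proposition~\ref{prop: var_lower_bound_dimers} provides a constant $c_0>0$ depending only on $D$ and the vertex distribution, hence independent of $n$, such that $\Var(\la|\M_{G_n}|\ra)\ge c_0|E(G_n)|$. Thus Theorem~\ref{thm: no_of_dimers} applies to all $G_n$ with one and the same triple of constants $c,C,c_0$, and for every $R\ge1$ it yields
\[
\sup_{s\in\R}\left|\prob\left(\frac{\la|\M_{G_n}|\ra-\E\la|\M_{G_n}|\ra}{\sqrt{\Var(\la|\M_{G_n}|\ra)}}\le s\right)-\Phi(s)\right|\le Cc_0^{-1/2}\left(e^{-cR}+|E(G_n)|^{-1/4}\,\Psi_{G_n}(R)^{2}\right)+Cc_0^{-3/4}|E(G_n)|^{-1/4}.
\]

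Finally I would set $R=R_n=\sqrt{\log|V(G_n)|}\to\infty$ and estimate the right-hand side. Writing $N=|V(G_n)|\to\infty$, the first term is $e^{-c\sqrt{\log N}}\to0$; using $|E(G_n)|\ge\delta N$ together with $\Psi_{G_n}(R_n)\le A e^{\eps\sqrt{\log N}}$, the middle term is at most $\delta^{-1/4}A^{2}\,N^{-1/4}e^{2\eps\sqrt{\log N}}$, which tends to $0$ because $\sqrt{\log N}=o(\log N)$; and the last term is at most $Cc_0^{-3/4}\delta^{-1/4}N^{-1/4}\to0$. Hence the Kolmogorov--Smirnov distance between the law of $(\la|\M_{G_n}|\ra-\E\la|\M_{G_n}|\ra)/\sqrt{\Var(\la|\M_{G_n}|\ra)}$ and $\Phi$ tends to $0$, and since $\Phi$ is continuous this implies convergence in distribution to $N(0,1)$, which is exactly the assertion of Corollary~\ref{cor:clt_dimer}.

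There is no genuine obstacle in this deduction; all the mathematical content lives in Theorem~\ref{thm: no_of_dimers} and Proposition~\ref{prop: var_lower_bound_dimers}. The one point that needs care is the order of quantifiers involving $\eps$: since $\eps$ in Theorem~\ref{thm: no_of_dimers} is prescribed by the model rather than chosen freely, one must invoke the uniform subexponential growth assumption at exactly that value of $\eps$---which is precisely what ``uniformly subexponential'' guarantees.
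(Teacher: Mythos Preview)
Your proposal is correct and follows essentially the same approach as the paper: invoke Proposition~\ref{prop: var_lower_bound_dimers} to obtain a uniform $c_0$, use uniform subexponential growth to place all $G_n$ in $\mathsf{Growth}(A,\eps)$ for the $\eps$ dictated by Theorem~\ref{thm: no_of_dimers}, and then choose $R=\sqrt{\log|V(G_n)|}$ to make the error bound vanish. Your careful remark about the order of quantifiers for $\eps$ is exactly the point the paper makes when noting that uniform subexponential growth yields $G_n\in\mathsf{Growth}(A,\eps)$ for any prescribed $\eps>0$.
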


To prove Theorem~\ref{thm: CLT_4_moments}, we apply a result by Chatterjee \cite{chatterjee2008new}, which has turned out to be quite successful in proving central limit theorems with a rate of convergence in a variety of problems in geometric probability, combinatorial optimization, and number theory \cite{chatterjee2008new, chatterjee2012random, chatterjee2017minimal, cao2021central}. 

Chatterjee's method quantifies the rough idea that a function $f(X_1, X_2, \ldots, X_N)$ of independent random variables $X_1, X_2, \ldots, X_N$ is asymptotically Gaussian if its partial derivatives $\partial_i f(X_1, X_2, \ldots, X_N)$ are `approximately independent'. See Theorem~\ref{lem: chatterjee} for a precise statement. For the free energy, these partial derivatives are given by  
\[ \partial_{w_e} F  = \left\la \mathbf{1}_{\{e\in \M_G\}}\right\ra,  \quad  \partial_{\nu_x} F  = \left\la \mathbf{1}_{\{x \not \in \M_G\}}\right\ra.   \]
To successfully apply Chatterjee's method, one needs to show that the above derivatives can be well-approximated by some functions that only depend on the randomness of the local neighborhood of $e$ or $x$.

 Let us restrict our discussion to the edge derivative $\partial_{w_e} F$ as the vertex derivative $\partial_{\nu_x} F$ behaves very similarly.  For an edge $e = (xy)$ of $G$, let $\bB^e_R$ denote the subgraph of $G$ induced by all the vertices that are at most distance $R$ away from either $x$ or $y$. Also, for a subgraph $H \subseteq G$, let $ \M_{H}$ denote a random matching on $H$ sampled according to the Gibbs measure restricted to the graph $H$ where the weights of the vertices and edges of $H$ are kept unchanged from $G$. The next result confirms that $\partial_{w_e} F$ can indeed be well-approximated by a local function. 
\begin{thm}	\label{thm: corr_decay_Lp}
Consider the disordered monomer-dimer model on a finite graph $G$  with maximum degree $D$.	
There exist constants $c, C>0$ depending on $D$ and the weight distributions such that the following holds. For any edge $e$ and for any $R\geq 1$, we have
\[	\E \big |\left\la \mathbf{1}_{\{e\in \M_G\}}\right\ra - \la \mathbf{1}_{\{e\in \M_{\bB^e_R}\}} \ra \big | \leq Ce^{-cR}.	\]\end{thm}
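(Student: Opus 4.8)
The plan is to establish the local approximation via a disagreement-percolation coupling in the spirit of van den Berg \cite{vdB}, but now carried out conditionally on the random weights so as to produce a high-probability (in the weights) decay of correlation. Fix the edge $e = (xy)$ and write $B = \bB^e_R$ and let $\partial B$ be the inner vertex boundary of $B$ in $G$. The key observation is that the Gibbs measure $\mu_G$, restricted to the edges and vertices strictly inside $B$, is obtained from $\mu_B$ by imposing a (random) boundary condition coming from the status of the vertices on $\partial B$: once we know which vertices of $\partial B$ are matched to something outside $B$, the conditional law inside $B$ is just the monomer-dimer Gibbs measure on the corresponding induced subgraph with the original weights. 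Thus $\la \1_{\{e \in \M_G\}}\ra$ and $\la \1_{\{e \in \M_B\}}\ra$ are both averages of $\la \1_{\{e \in \M_{B'}\}}\ra$-type quantities over (possibly different) boundary configurations, and their difference is controlled by the probability, under a suitable monotone coupling, that the two boundary conditions ``disagree all the way to $e$''.

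The steps I would carry out, in order, are: (1) Reduce to a deterministic statement: show that for \emph{any} fixed realization of the weights, $|\la \1_{\{e \in \M_G\}}\ra - \la \1_{\{e \in \M_B\}}\ra|$ is bounded by the probability that a certain disagreement cluster — built edge by edge from $\partial B$ toward $e$ using the one-vertex-at-a-time updating and the path-coupling/disagreement-percolation construction of \cite{vdB} — reaches $e$; here each edge is ``open'' with a conditional probability that depends on the local weights. (2) Bound that conditional open-probability: because the monomer-dimer local conditional probabilities are ratios of matching partition functions of graphs that differ by one vertex, one gets, after integrating out the weights on a given edge, that the \emph{annealed} probability of any fixed self-avoiding path of length $\ell$ from $\partial B$ to $e$ being ``open'' decays like $\rho^\ell$ for some $\rho = \rho(D, \text{weight laws}) < 1$; non-degeneracy of $w_e$ (and $\E|w_e - w_e'|\ge K_2$ in the quantitative regime) is what prevents $\rho$ from being $1$. (3) Union bound over the at most $D(D-1)^{\ell-1}$ such paths of each length $\ell \ge R$, and sum the geometric series, provided $\rho(D-1) < 1$; this is where a constraint relating the strength of correlation decay to the degree enters, and one arranges (by choosing the relevant constant, e.g. tilting or truncating appropriately, or simply because the monomer-dimer model \emph{always} has this property by Heilmann--Lieb-type bounds) that $\rho(D-1)<1$ holds unconditionally. (4) Finally take expectation over the weights: $\E|\cdot| \le \prob(\text{disagreement reaches } e) \le \sum_{\ell \ge R} D(D-1)^{\ell-1}\rho^\ell \le C e^{-cR}$, which is the claimed bound.

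The main obstacle I expect is step (2)–(3): showing that the per-edge disagreement probability, \emph{after averaging over the weights on that edge}, is bounded by a constant $\rho$ with $\rho(D-1) < 1$ uniformly in the rest of the (conditioned) weights. The subtlety is that the local conditional matching probabilities are not bounded away from $1$ pointwise in the weights — an edge with an enormously favorable weight $w_e$ is almost surely in the matching regardless of its neighbors — so one genuinely needs the \emph{averaging} over $w_e$ (using non-degeneracy, or the $\E|w_e-w_e'|\ge K_2$ hypothesis) together with the Heilmann--Lieb structural fact that matching-partition-function ratios are controlled, to beat the branching factor $D-1$. Managing this averaging while the disagreement cluster is being revealed — so that the edges whose weights are being integrated are ``fresh'' and independent of the cluster revealed so far — requires a careful exploration/martingale argument, and getting the constants to line up so that $c, C$ depend only on $D$ and the weight laws is the delicate part. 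The treatment of the vertex derivative $\partial_{\nu_x}F$ is identical with $\bB^e_R$ replaced by $\bB^x_R$, so I would state it as a remark rather than repeat the argument.
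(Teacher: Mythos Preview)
Your disagreement-percolation setup is the right starting point and matches the paper's framework. However, your steps (2)--(3) contain a genuine gap that the paper avoids by exploiting a structural fact about matchings which you do not invoke.

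The key missed observation is that for any two matchings $\M_1,\M_2$, the symmetric difference $\M_1\oplus\M_2$ is a disjoint union of self-avoiding paths and cycles, with edges alternating between $\M_1$ and $\M_2$. Consequently there are \emph{at most two} disagreement paths emanating from $e$, one from each endpoint, and each is self-avoiding. There is therefore no need to union-bound over all $D(D-1)^{\ell-1}$ self-avoiding paths, and the constraint $\rho(D-1)<1$ you identify never arises. That constraint is not something one can ``arrange'' by tilting or by Heilmann--Lieb: the one-step extension probability in \eqref{eq:star_prob}-type expressions depends on \emph{all} the edge and vertex weights at the current endpoint, not just the weight of the next edge, and these can be arbitrarily extreme, so even after averaging over a single fresh weight the conditional extension probability need not be below $1/(D-1)$.

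The paper's argument instead proceeds as follows. Fix a large threshold $a$ and call an edge $f$ \emph{good} if the edge and vertex weights in its $1$-neighborhood are bounded in absolute value by $a$. By a Chernoff bound along a greedy collection of edges with disjoint $2$-neighborhoods, together with a union bound over the at most $2D^\ell$ self-avoiding paths of length $\ell$, one shows that with probability $\ge 1-Ce^{-cR}$ (this is the event $A$) \emph{every} self-avoiding path from $e$ to $\partial\bB^e_R$ contains at least $c_1R$ good edges. On $A$, first-passage percolation with cost $\mathbf{1}_{\{f\text{ good}\}}$ produces $c_1R$ disjoint cutsets of good edges separating $e$ from $\partial\bB^e_R$. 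The single disagreement path (say the one from $x$) must cross each cutset, and at a good edge the one-step extension probability is deterministically at most $De^{3a}/(1+De^{3a})<1$, because all relevant weights are bounded. Multiplying over the $c_1R$ cutset crossings gives the exponential bound on the disagreement probability, \emph{conditionally on the weights in $A$}; the expectation bound in the theorem then follows since the integrand is at most $1$ on $A^c$.

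In short: replace your per-edge averaging and path union bound by (i) the self-avoiding structure of $\M_1\oplus\M_2$, which eliminates the branching factor, and (ii) a good-edge / cutset argument that handles the unbounded weights by a single global high-probability event rather than edge-by-edge freshness.
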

Theorem~\ref{thm: no_of_dimers} is also proved using Chatterjee's method. The partial derivative $\la |\M_G | \ra$ with respect to edge and vertex weights naturally involve edge-to-edge and vertex-to-vertex correlations (see \eqref{partial_e} and \eqref{partial_x}). To prove the locality of these derivatives, we establish an exponential two-point correlation decay in the disordered monomer-dimer model using a similar technique. For example, we show that
\begin{prop}
    \label{prop: corr_decay}
    Consider the disordered monomer-dimer model on a finite graph $G$  with maximum degree $D$.	
    There exist constants $c, C>0$ depending on $D$ and the weight distributions such that the following holds. For any edge $e, e'$, we have
\[ \E  |\left\la \mathbf{1}_{\{e\in \M_G, e' \in \M_G \}}\right\ra - \la \mathbf{1}_{\{e\in \M_{G}\}} \ra \la \mathbf{1}_{\{e'\in \M_{G}\}} \ra |  \leq Ce^{-c\mathrm{dist}(e, e')}.\]
\end{prop}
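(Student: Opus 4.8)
\textbf{Proof proposal for Proposition~\ref{prop: corr_decay}.}
The plan is to deduce the two‑point bound from the one‑point locality estimate of Theorem~\ref{thm: corr_decay_Lp}, using the elementary fact that conditioning the monomer--dimer model on an edge being occupied is the same as deleting that edge's two endpoints. Write $e'=(x'y')$ and let $G'=G\setminus\{x',y'\}$ be the graph obtained from $G$ by removing $x',y'$ together with all incident edges, keeping the weights of the surviving vertices and edges unchanged. Any matching of $G$ containing $e'$ is of the form $\{e'\}\cup M''$ with $M''$ a matching of $G'$, and its Gibbs weight factors as $e^{w_{e'}}$ times the Gibbs weight of $M''$ in $G'$, because $x',y'$ become matched and so the vertex terms $\nu_{x'},\nu_{y'}$ no longer appear. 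Assuming $e\cap e'=\emptyset$ (if $e$ and $e'$ share a vertex then $\mathbf{1}_{\{e\in\M_G\}}\mathbf{1}_{\{e'\in\M_G\}}\equiv0$ and $|\la\mathbf{1}_{\{e\in\M_G\}}\mathbf{1}_{\{e'\in\M_G\}}\ra-\la\mathbf{1}_{\{e\in\M_G\}}\ra\la\mathbf{1}_{\{e'\in\M_G\}}\ra|\le1$, which is absorbed into $C$), summing this factorization over $M''$ gives
\[
\la\mathbf{1}_{\{e\in\M_G\}}\mathbf{1}_{\{e'\in\M_G\}}\ra=\la\mathbf{1}_{\{e'\in\M_G\}}\ra\,\la\mathbf{1}_{\{e\in\M_{G'}\}}\ra,
\]
so that
\[
\la\mathbf{1}_{\{e\in\M_G\}}\mathbf{1}_{\{e'\in\M_G\}}\ra-\la\mathbf{1}_{\{e\in\M_G\}}\ra\la\mathbf{1}_{\{e'\in\M_G\}}\ra=\la\mathbf{1}_{\{e'\in\M_G\}}\ra\big(\la\mathbf{1}_{\{e\in\M_{G'}\}}\ra-\la\mathbf{1}_{\{e\in\M_G\}}\ra\big).
\]
Since $\la\mathbf{1}_{\{e'\in\M_G\}}\ra\le1$, it remains to bound $\E\big|\la\mathbf{1}_{\{e\in\M_{G'}\}}\ra-\la\mathbf{1}_{\{e\in\M_G\}}\ra\big|$.

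For this I would insert the ball $\bB^e_R$ with $R=\mathrm{dist}(e,e')-1$ as a buffer (assuming $\mathrm{dist}(e,e')\ge2$; otherwise the bound is trivial). Because $\mathrm{dist}(e,x')\ge\mathrm{dist}(e,e')>R$ and likewise for $y'$, no geodesic of length at most $R$ issued from an endpoint of $e$ can pass through $x'$ or $y'$, and moreover $x',y'\notin V(\bB^e_R)$; consequently $\bB^e_R$ is the same induced subgraph carrying the same weights whether it is formed inside $G$ or inside $G'$, so $\la\mathbf{1}_{\{e\in\M_{\bB^e_R}\}}\ra$ is literally the same random variable in the two cases. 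Since $G$ and $G'$ both have maximum degree at most $D$, Theorem~\ref{thm: corr_decay_Lp} applies to each with the \emph{same} constants $c,C$; inserting $\la\mathbf{1}_{\{e\in\M_{\bB^e_R}\}}\ra$ and using the triangle inequality gives $\E\big|\la\mathbf{1}_{\{e\in\M_{G'}\}}\ra-\la\mathbf{1}_{\{e\in\M_G\}}\ra\big|\le2Ce^{-cR}=2Ce^{c}\,e^{-c\,\mathrm{dist}(e,e')}$, which, after enlarging $C$ to also cover $\mathrm{dist}(e,e')\le1$, is exactly the asserted bound.

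I do not anticipate a genuine obstacle in this route: the real analytic content --- the van den Berg-type disagreement‑percolation argument producing exponential locality despite possibly unbounded weights --- is already packaged inside Theorem~\ref{thm: corr_decay_Lp}, and the proposal only adds the routine bookkeeping that deleting the two (far‑away) endpoints of $e'$ leaves a neighborhood of $e$ untouched. The one point meriting a little care is that the constants in Theorem~\ref{thm: corr_decay_Lp} are uniform over the class of graphs of maximum degree at most $D$, so that the same $c,C$ may legitimately be used for $G$ and for $G'$; this is precisely how that theorem is phrased. A more self‑contained alternative --- the ``similar technique'' alluded to in the introduction, and the one needed in any case for the vertex--vertex and edge--vertex variants mentioned there --- is to rerun van den Berg's coupling directly for the pair $(\M_G,\M_{G'})$: explore the disagreement set started from $\{x',y'\}$ and dominate it by a percolation cluster whose parameters are controlled on the high‑probability event that the weights within distance $R$ of $e$ are not too large. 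For the edge--edge statement, however, the reduction above is the shortest route.
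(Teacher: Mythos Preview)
Your argument is correct. The conditioning identity $\la\mathbf{1}_{\{e\in\M_G\}}\mathbf{1}_{\{e'\in\M_G\}}\ra=\la\mathbf{1}_{\{e'\in\M_G\}}\ra\la\mathbf{1}_{\{e\in\M_{G'}\}}\ra$ is valid, the observation that $\bB^e_R$ is literally the same subgraph in $G$ and in $G'$ once $R<\mathrm{dist}(e,e')$ is right, and the uniformity of the constants in Theorem~\ref{thm: corr_decay_Lp} over all graphs of maximum degree at most $D$ is exactly what the statement of that theorem provides.

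Your route, however, differs from the paper's. The paper does not reduce to the one-edge theorem via the deletion $G'=G\setminus\{x',y'\}$. Instead it first proves a two-edge locality result (Proposition~\ref{prop:ee_vv_local}) by rerunning the van den Berg disagreement-percolation argument with a path of disagreement started from $\{e,e'\}$; from this it deduces (Corollary~\ref{prop: corr_decay_stronger}) the covariance bound by taking $B_R(e,e')=\bB^e_R\sqcup\bB^{e'}_R$ when $\mathrm{dist}(e,e')\ge 2R+2$ and using the Markov property to factorize $\la\mathbf{1}_{\{e,e'\in\M_{B_R(e,e')}\}}\ra=\la\mathbf{1}_{\{e\in\M_{\bB^e_R}\}}\ra\la\mathbf{1}_{\{e'\in\M_{\bB^{e'}_R}\}}\ra$. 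Your argument is shorter and more elementary for the statement at hand, since it avoids redoing any disagreement percolation. On the other hand, the paper's detour through Proposition~\ref{prop:ee_vv_local} is not wasted effort: that two-point locality (with the $\sup$ over the weights $w_e,w_{e'}$) is exactly what is needed later in the proof of Theorem~\ref{thm: no_of_dimers} to localize the discrete derivatives $\Delta_e\Lambda$, a purpose for which the bare covariance bound of Proposition~\ref{prop: corr_decay} does not suffice. Your closing remark about the ``more self-contained alternative'' is essentially the paper's actual path.
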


In  \cite{vdB}, van den Berg used a beautiful geometric argument involving disagreement percolation to control the effect of boundary in the standard monomer-dimer model (where $w_e \equiv \log \lambda$ and $\nu_x \equiv 0$ for some constant $\lambda >0$).
A similar argument can also be found in \cite{vdBB}. This showed exponential strong spatial mixing for the monomer-dimer model at any temperature and gave an alternative proof of the absence of phase transition. To prove Theorem~\ref{thm: corr_decay_Lp} and Proposition~\ref{prop: corr_decay}, we broadly follow van den Berg's argument. However, in the disordered case, we have random, possibly unbounded weights on the edge and vertices, which leads to additional complications.

Finally, let us mention that the bound on Chatterjee's theorem requires a finite fourth-moment assumption on the weight distributions. To remove this assumption, we carefully interpolate the {\em centered}  free energy with the original weights with the one with the appropriately truncated weights in the proof of Theorem~\ref{thm: CLT_two_moments}.

The rest of the paper is organized as follows. In Section~\ref{sec:corrdecay}, we prove the correlation decay result for the disordered monomer-dimer model. We then use it to prove the central limit theorems in Section~\ref{sec:CLT}. We conclude the paper by listing a few open problems in Section~\ref{sec:open_prob}.

\section{Correlation Decay}\label{sec:corrdecay}
This section contains the proof of  the correlation decay (Theorem~\ref{thm: corr_decay_Lp} and Proposition~\ref{prop: corr_decay}) in the disordered monomer-dimer model. In fact, a slightly stronger version of Theorem~\ref{thm: corr_decay_Lp} is required to prove Theorem~\ref{thm: CLT_4_moments}. If the weights are bounded, we can show exponential correlation decay for all possible realizations of the weights. For unbounded weights, the same conclusion holds only after we throw away a bad set of weights with an exponentially small probability.  

Call a function $\varphi : (0, \infty) \to (0, \infty)$  vanishing if   $\lim_{s \to \infty} \varphi(s)  = 0$. For a vanishing function $\varphi$, we say a random variable $X$ to be $\varphi$-bounded if 
\[ \prob( |X| \ge t ) \le \varphi(t) \text{ for all } t > 0. \]
For $U \subseteq V(G)$ and $F \subseteq E(G)$, let $\calF^{U \times F}$ denote the product $\sigma$-algebra generated by the independent random variables $\big( (\nu_x)_{x \in U}, (w_e)_{e \in F}\big).$ 
\begin{prop}
	\label{thm: corr_decay}
	Given a vanishing function $\varphi$ and integer $D \ge 1$, 
	there exist  constants $c, C>0$  depending only on $\varphi $ and $D$
	such that the following holds. Consider the disordered monomer-dimer model with $\varphi$-bounded edge and vertex weights on any  finite graph $G$ whose maximum degree is bounded above by $D$. Then for any $e = (xy) \in E(G)$ and for any $R\geq 1$,  there exists an event $A \in \calF^{ V(\bB^e_{R}) \setminus \{x, y\} \times  E(\bB^e_{R}) \setminus \{e\}} $ with $\prob(A) \geq 1 - C e^{-c R}$, such that  on the event $A$, we have 
	\[
	\left|\left\la \mathbf{1}_{\{e\in \M_G\}}\right\ra -  \left\la \mathbf{1}_{\{e\in \M_{H}\}}\right\ra \right| \leq Ce^{-cR},
	\]
	for any subgraph $H$ satisfying $\bB^e_R \subseteq H \subseteq G$.
\end{prop}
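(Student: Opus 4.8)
The plan is to adapt van den Berg's disagreement‑percolation argument for the monomer–dimer model; the novelty is handling random, possibly unbounded weights.

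\emph{Step 1 (reduction to strong spatial mixing).} By the spatial Markov property of the monomer–dimer model, both $\left\la \mathbf{1}_{\{e\in \M_G\}}\right\ra$ and $\left\la \mathbf{1}_{\{e\in \M_H\}}\right\ra$ can be written as averages — over the restriction of the matching to the edges having an endpoint outside $\bB^e_R$ — of conditional probabilities $\left\la \mathbf{1}_{\{e\in M\}}\right\ra^{\eta}$, where $M$ is drawn from the monomer–dimer measure on $\bB^e_R$ with the original $G$-weights and a boundary condition $\eta$ recording which vertices at distance $R$ from $e$ are already covered. Since $\bB^e_R\subseteq H$, the inner graph and its weights are identical in the two cases and only the law of $\eta$ changes; hence it suffices to produce an event $A\in\calF^{\,V(\bB^e_R)\setminus\{x,y\}\,\times\,E(\bB^e_R)\setminus\{e\}}$ with $\prob(A)\ge 1-Ce^{-cR}$ on which $\sup_{\eta,\eta'}\bigl|\left\la \mathbf{1}_{\{e\in M\}}\right\ra^{\eta}-\left\la \mathbf{1}_{\{e\in M\}}\right\ra^{\eta'}\bigr|\le Ce^{-cR}$.

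\emph{Step 2 (disagreement coupling and the non‑branching bound).} For fixed $\eta,\eta'$ I would couple the two inner measures by revealing the matching edge by edge from the distance‑$R$ boundary inward, using an optimal coupling at each step. The decisive feature, special to matchings, is that $M\bigtriangleup M'$ is a disjoint union of alternating paths and cycles and that disagreement edges are always connected to the boundary; therefore $\{e\in M\bigtriangleup M'\}$ forces the alternating path of $M\bigtriangleup M'$ through $e$ to reach distance $R$, so it has at least $R$ edges. One then bounds the probability of such a path by exploring it starting from $e$: when the path reaches a new vertex $v$ it can continue only if $v$ is matched in the appropriate matching, and since $v$ then has at most $D-1$ available incident edges and may always be left a monomer, the conditional probability of continuation is at most
\[ q_v \;=\; \frac{(D-1)\,e^{\,\max_{g\ni v}w_g-\nu_v}}{1+(D-1)\,e^{\,\max_{g\ni v}w_g-\nu_v}} \;<\;1 , \]
which depends only on $\nu_v$ and on the edge weights at $v$. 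Because an alternating path does not branch, the probability of surviving to length $R$ is at most $\max_{\gamma}\prod_{v\in\gamma}q_v$, the maximum over length-$R$ self‑avoiding paths $\gamma$ issued from an endpoint of $e$ — crucially with \emph{no} $D^R$ prefactor. This absence is exactly what makes monomer–dimer correlation decay valid at all activities (and here for unbounded weights), even though the $q_v$ are only bounded by $1$.

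\emph{Step 3 (the good event).} Fix a threshold $q_1<1$ and call $v$ \emph{bad} if $q_v>q_1$, i.e. if $\max_{g\ni v}w_g-\nu_v$ exceeds a constant $t_1(q_1)$ with $t_1(q_1)\to\infty$ as $q_1\uparrow 1$. By $\varphi$-boundedness, $\prob(v\text{ bad})\le p_1(q_1)\to 0$, and badness of $v$ is a function of the weights within graph distance $1$ of $v$. Choosing $q_1$ (depending only on $D$ and $\varphi$) small enough that $p_1$ beats $D$ in the union bound below, let $A$ be the event that every length-$R$ self‑avoiding path from $x$ or from $y$ inside $\bB^e_R$ has at most $R/4$ bad vertices among those lying in $V(\bB^e_R)\setminus\{x,y\}$ at distance at most $R-1$ from $e$ — this last restriction is cosmetic, affecting only constants, but makes $A$ measurable with respect to the stated $\sigma$-algebra. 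On $A$ the bound of Step 2 is at most $q_1^{\,R/4}=Ce^{-cR}$, which yields the proposition; and $\prob(A^c)\le Ce^{-cR}$ by a union bound over the at most $D^R$ such paths together with a large‑deviation estimate for the (finitely dependent, low‑density) number of bad vertices along a fixed path. The delicate point — where the argument genuinely departs from van den Berg's bounded‑weight setting — is Step 2: setting up the disagreement coupling so that it both has the ``disagreement implies an alternating path to the boundary'' property and permits the continuation probabilities along that path, traced from $e$, to be controlled by the \emph{local} quantities $q_v$, which must then be carried through the exploration and combined with the percolation estimate for $A$.
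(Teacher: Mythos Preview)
Your overall architecture—Markov reduction to boundary conditions, disagreement percolation exploiting the alternating–path structure of $M\bigtriangleup M'$, and a high–probability ``good'' event controlling the weights along every self–avoiding path—is exactly the paper's. Two points in Step~2, however, diverge from what the paper actually does and, as written, are not correct.

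First, the coupling. The paper does \emph{not} build an inward–revealing optimal coupling; it takes the \emph{independent} product coupling $\Pi = \widehat\mu^{\eta^1}_F\otimes\widehat\mu^{\eta^2}_F$ and invokes van den Berg's lemma, which bounds the marginal difference at $e$ by $\Pi(\text{there is a disagreement path from }e\text{ to }\partial_E F)$. Your assertion that ``disagreement edges are always connected to the boundary'' is false for the independent coupling (there can be alternating cycles or paths ending at interior monomer/dimer mismatches), and for your proposed coupling it would have to be proved. Van den Berg's lemma sidesteps this entirely, and the independence of $\M_1,\M_2$ is then used crucially in the continuation bound: the paper conditions on $\gamma$ up to the current good edge and on \emph{all} of $\M^\eta_H$ and all of $\M^0_H$ except the $d\le D$ candidate edges $f_1,\dots,f_d$ at the tip; independence lets one bound the continuation probability by the conditional probability that one of the $f_i$ lies in $\M^0_H$, which is the ratio~(2.3). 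The paper organizes this via $c_1R$ cutsets of good edges (built by first–passage distance), so the product is taken only over good crossings—your ``$\le R/4$ bad vertices on every path'' event plays the same role, but the cutset formulation makes the conditioning step cleaner.

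Second, your formula for $q_v$ is not an upper bound. The correct conditional ratio is $\dfrac{\sum_{i\in Q} e^{w_{f_i}-\nu_{z_i}-\nu_z}}{1+\sum_{i\in Q} e^{w_{f_i}-\nu_{z_i}-\nu_z}}$, which involves the vertex weights $\nu_{z_i}$ of the \emph{other} endpoints; since these can be arbitrarily negative, dropping them does not give an upper bound. This is why the paper's notion of a ``good'' edge requires bounds on the weights in a radius–$2$ neighborhood (edges adjacent to $f$ and all their endpoint weights), not just at $v$ itself. With that correction your Step~3 goes through essentially as in the paper.
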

The above edge correlation decay result easily yields a similar correlation decay result for vertices. 
\begin{cor}
	\label{cor: corr_decay_vertex}
Assume the same set-up as in Proposition~\ref{thm: corr_decay}.
	Then for any for $x \in V(G)$ and any $R\geq 1$, 	 there exists an event $A \in \calF^{ V(\bB^x_{R}) \setminus \{x\} \times E(\bB^x_{R})} $  with $\prob(A) \geq 1 - C e^{-c R}$, such that on event $A$, we have 
	\[ \left|\left\la \mathbf{1}_{\{x \not \in \M_G\}}\right\ra -  \left\la \mathbf{1}_{\{x \not \in \M_{\bB^x_R}\}}\right\ra \right| \leq Ce^{-cR}.
	\]
\end{cor}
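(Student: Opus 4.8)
The plan is to reduce the event $\{x \not\in \M_G\}$ to the incident-edge events and then apply the edge correlation decay of Proposition~\ref{thm: corr_decay} to each edge incident to $x$. Let $e_1 = (xy_1), \dots, e_k = (xy_k)$ be the edges of $G$ incident to $x$, so $k \le D$. In any matching $M$ of $G$ the events $\{e_i \in M\}$ are mutually exclusive and $x$ is unmatched precisely when none of them occurs; hence $\1_{\{x \not\in M\}} = 1 - \sum_{i=1}^k \1_{\{e_i \in M\}}$, and the identical relation holds for matchings of any subgraph containing $x$ and all the $e_i$ (note $e_i \in E(\bB^x_R)$ since $R \ge 1$). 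Taking Gibbs expectations,
\[
\la \1_{\{x \not\in \M_G\}}\ra = 1 - \sum_{i=1}^k \la \1_{\{e_i \in \M_G\}}\ra, \qquad \la \1_{\{x \not\in \M_{\bB^x_R}\}}\ra = 1 - \sum_{i=1}^k \la \1_{\{e_i \in \M_{\bB^x_R}\}}\ra .
\]
Thus it suffices to control each summand $\la \1_{\{e_i \in \M_G\}}\ra - \la \1_{\{e_i \in \M_{\bB^x_R}\}}\ra$ on a single good event.

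The point requiring care is matching the neighborhoods. Since $d(x, y_i) = 1$, any vertex within graph distance $R-1$ of $x$ or of $y_i$ is within distance $R$ of $x$, so $\bB^{e_i}_{R-1} \subseteq \bB^x_R \subseteq G$. Assuming $R \ge 2$ (the case $R=1$ being trivial after enlarging $C$), I would apply Proposition~\ref{thm: corr_decay} to the edge $e_i$ with radius $R-1$ and with the admissible comparison graph $H = \bB^x_R$: this yields an event $A_i \in \calF^{V(\bB^{e_i}_{R-1})\setminus\{x,y_i\}\,\times\,E(\bB^{e_i}_{R-1})\setminus\{e_i\}}$ with $\prob(A_i) \ge 1 - C e^{-c(R-1)}$ on which $\bigl|\la \1_{\{e_i \in \M_G\}}\ra - \la \1_{\{e_i \in \M_{\bB^x_R}\}}\ra\bigr| \le C e^{-c(R-1)}$. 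Because $V(\bB^{e_i}_{R-1}) \subseteq V(\bB^x_R)$, every edge of $\bB^{e_i}_{R-1}$ lies in $E(\bB^x_R)$ and $V(\bB^{e_i}_{R-1})\setminus\{x,y_i\} \subseteq V(\bB^x_R)\setminus\{x\}$, so $A_i$ is measurable with respect to $\calF^{V(\bB^x_R)\setminus\{x\}\,\times\,E(\bB^x_R)}$.

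To finish, set $A = \bigcap_{i=1}^k A_i$, which lies in $\calF^{V(\bB^x_R)\setminus\{x\}\,\times\,E(\bB^x_R)}$ and satisfies $\prob(A) \ge 1 - k C e^{-c(R-1)} \ge 1 - D C e^{c} e^{-cR}$ by the union bound. On $A$, combining the two displayed identities with the triangle inequality gives
\[
\bigl| \la \1_{\{x \not\in \M_G\}}\ra - \la \1_{\{x \not\in \M_{\bB^x_R}\}}\ra \bigr| \le \sum_{i=1}^k \bigl| \la \1_{\{e_i \in \M_G\}}\ra - \la \1_{\{e_i \in \M_{\bB^x_R}\}}\ra \bigr| \le D C e^{c} e^{-cR},
\]
and relabeling $c, C$ yields the claim. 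I do not expect a genuine obstacle here: the whole argument is a deterministic reduction plus an intersection of $O(D)$ copies of the edge statement. The only items needing attention are the off-by-one in the radius (which forces the use of $\bB^{e_i}_{R-1}$ so that $\bB^x_R$ is a legitimate comparison graph in Proposition~\ref{thm: corr_decay}), the verification that each $A_i$ sits inside the advertised product $\sigma$-algebra, and the cosmetic handling of small $R$.
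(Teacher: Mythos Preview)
Your proposal is correct and follows essentially the same route as the paper: decompose the vertex indicator via $\1_{\{x\notin M\}}=1-\sum_i\1_{\{e_i\in M\}}$, apply Proposition~\ref{thm: corr_decay} to each incident edge $e_i$ at radius $R-1$ with the comparison subgraph $H=\bB^x_R\supseteq\bB^{e_i}_{R-1}$, intersect the resulting events $A_i$, and conclude by the union bound and triangle inequality. Your handling of the measurability inclusion and the small-$R$ cosmetics is, if anything, slightly more explicit than the paper's.
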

\begin{proof}[Proof of Corollary~\ref{cor: corr_decay_vertex}]
Let $e_1, e_2, \ldots, e_d $ be the edges incident on $x$. By assumption, $d \le D$. Clearly, for any matching of $G$  (or $\bB^e_R$), $x$ does not belong to the matching  if and only if  exactly one of the edges $e_1, e_2, \ldots, e_d$ belongs to that matching. Consequently, 
\[  \mathbf{1}_{\{x \not \in \M_G\}} -   \mathbf{1}_{\{x \not \in \M_{\bB^x_R}\}} =  \sum_{i=1}^d  \Big (  \mathbf{1}_{\{e_i  \in \M_G\}} -   \mathbf{1}_{\{e_i  \in \M_{\bB^x_R}\}} \Big).  \]
For the edge $e_i = (xx_i)$, we apply Proposition~\ref{thm: corr_decay} with $H =  \bB^x_R \supseteq \bB^{e_i}_{R-1}$ to obtain that
	\[
	\left|\Big \la \mathbf{1}_{\{e_i \in \M_G\}}\Big\ra -  \left\la \mathbf{1}_{\{e_i \in \M_{\bB^x_R}\}}\right\ra \right| \leq C_1 e^{-c_1(R-1)},
	\]
	on some event $A_i$ such that   $A_i \in \calF^{ V(\bB^{e_i}_{R-1}) \setminus \{x, x_i\} \times  E(\bB^{e_i}_{R-1}) \setminus \{e_i\}}  \subseteq \calF^{ V(\bB^x_{R}) \setminus \{x\} \times  E(\bB^x_{R}) }$ and  $\prob(A_i) \geq 1 - C_1 e^{-c_1 (R-1)}$. Moreover, the constants $c_1, C_1$ can be chosen depending only on $D$ and $\varphi$. Take $A = \cap_{i=1}^d A_i$ and set $c = c_1/2$ and $C = DC_1$. Note that $A$ remains  $ \calF^{ V(\bB^x_{R}) \setminus \{x\} \times E(\bB^x_{R})}$-measurable. 
	The corollary  now follows from the triangle inequality. 
\end{proof}
The following lemma is an immediate consequence of Proposition~\ref{thm: corr_decay} and Corollary~\ref{cor: corr_decay_vertex}.
\begin{lem}
	\label{lem: corr_decay_Lp}
	Assume the same set-up as in Proposition~\ref{thm: corr_decay}.
Then  for any edge $e$, any vertex $x$, and for any $R\geq 1$,  we have 
	\[
	\E \sup_{ w_e }  |\left\la \mathbf{1}_{\{e\in \M_G\}}\right\ra -  \la \mathbf{1}_{\{e\in \M_{\bB^e_R}\}} \ra |  \leq Ce^{-cR},
	\]
	\[
	\E \sup_{ \nu_x }  |\left\la \mathbf{1}_{\{x \not \in \M_G\}}\right\ra -  \la \mathbf{1}_{\{x \not \in \M_{\bB^x_R}\}} \ra |  \leq Ce^{-cR}, 
	\]
	where $c, C$ are constants that depend only on $D$ and $\varphi$.
\end{lem}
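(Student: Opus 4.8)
The plan is to read Lemma~\ref{lem: corr_decay_Lp} off Proposition~\ref{thm: corr_decay} and Corollary~\ref{cor: corr_decay_vertex}, exploiting the key structural feature that the good event $A$ produced in those statements is measurable with respect to a $\sigma$-algebra that does \emph{not} involve the weight $w_e$ (respectively $\nu_x$). For the first inequality I would apply Proposition~\ref{thm: corr_decay} with the choice $H = \bB^e_R$, which trivially satisfies $\bB^e_R \subseteq H \subseteq G$. This produces an event $A \in \calF^{V(\bB^e_R)\setminus\{x,y\} \times E(\bB^e_R)\setminus\{e\}}$ with $\prob(A) \ge 1 - Ce^{-cR}$ on which $|\langle \mathbf{1}_{\{e\in\M_G\}}\rangle - \langle \mathbf{1}_{\{e\in\M_{\bB^e_R}\}}\rangle| \le Ce^{-cR}$. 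Because $A$ lies in a product $\sigma$-algebra not involving $w_e$, whether a configuration belongs to $A$ is unaffected by the value of $w_e$; hence on $A$ the above estimate holds \emph{simultaneously for every} $w_e \in \R$, and therefore $\sup_{w_e} |\langle \mathbf{1}_{\{e\in\M_G\}}\rangle - \langle \mathbf{1}_{\{e\in\M_{\bB^e_R}\}}\rangle| \le Ce^{-cR}$ on $A$.

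Next I would dispatch the minor measurability point implicit in the notation $\sup_{w_e}$. With all weights other than $w_e$ held fixed, each of $\langle \mathbf{1}_{\{e\in\M_G\}}\rangle$ and $\langle \mathbf{1}_{\{e\in\M_{\bB^e_R}\}}\rangle$ is a ratio of two polynomials in $e^{w_e}$ with non-negative coefficients and strictly positive denominator, hence continuous in $w_e$; consequently the supremum over $w_e \in \R$ equals the supremum over $w_e \in \mathbb{Q}$ and so is a genuine random variable. I would then bound the expectation by splitting over $A$ and $A^c$: on $A$ use the bound $Ce^{-cR}$ obtained above, and on $A^c$ use the crude deterministic estimate $|\langle \mathbf{1}_{\{e\in\M_G\}}\rangle - \langle \mathbf{1}_{\{e\in\M_{\bB^e_R}\}}\rangle| \le 1$, valid for every realization of all the weights since both terms lie in $[0,1]$. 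This gives
\[ \E \sup_{w_e} \big|\langle \mathbf{1}_{\{e\in\M_G\}}\rangle - \langle \mathbf{1}_{\{e\in\M_{\bB^e_R}\}}\rangle\big| \le Ce^{-cR}\,\prob(A) + \prob(A^c) \le 2Ce^{-cR}, \]
and after renaming constants the first inequality follows. The second inequality is obtained in exactly the same way from Corollary~\ref{cor: corr_decay_vertex}, whose good event lies in $\calF^{V(\bB^x_R)\setminus\{x\}\times E(\bB^x_R)}$ and hence does not involve $\nu_x$.

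I do not anticipate any genuine obstacle here: the one delicate point, namely that the estimate on the event $A$ is uniform in $w_e$, is already built into the formulation of Proposition~\ref{thm: corr_decay}, where the good event was deliberately placed inside a $\sigma$-algebra that excludes $w_e$. Thus all the substance of the argument resides in the proofs of those two results, and the present lemma is merely a repackaging of their estimates via a union bound over the bad event together with the trivial $[0,1]$ bound off it.
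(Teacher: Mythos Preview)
Your proposal is correct and is exactly the argument the paper has in mind: the paper itself simply records that Lemma~\ref{lem: corr_decay_Lp} is an immediate consequence of Proposition~\ref{thm: corr_decay} and Corollary~\ref{cor: corr_decay_vertex}, and your write-up spells out precisely that deduction (good event independent of $w_e$ or $\nu_x$, uniform bound on $A$, trivial $[0,1]$ bound on $A^c$). There is nothing to add.
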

The rest of the section is devoted to proving Proposition~\ref{thm: corr_decay}.

\subsection{Markov random fields and path of disagreement}

We first recall a coupling idea by van den Berg  for a general Markov random field. Fix a finite graph $H$ and let $\Sigma = \{0,1\}^{V(H)}$ be  the space of binary spins indexed by its vertices. The elements of $\Sigma$ are called configurations. Let  $\lambda$  be a  probability measure on $\Sigma$ and  let $(\sigma_v)_{v \in V(H)}$ be a random element of $\Sigma$ drawn according to $\lambda$.  The probability measure $\lambda$  is called a Markov random field if for any $U \subseteq V(H)$, and for any configuration $\eta \in \{0,1\}^{U^c}$,
\[
\lambda\left(\sigma_v = \cdot,  \ v\in U \mid \sigma_v =\eta_v,  v\in U^c\right) = \lambda\left(\sigma_v = \cdot,  \ v\in U \mid \sigma_v = \eta_v, v\in \partial_V U\right),
\]
provided $\lambda ( \sigma_v = \eta_v, v\in \partial_V U) > 0$.
In above, for $U \subseteq V(H)$, $\partial_V U$ denotes  the outer vertex  boundary of $U$ in $H$, that is, $\partial_V U = \{ x \in V(H) \setminus U: (xy) \in E(H)  \text{ for some } y \in U  \}$.


Given $U \subseteq V(H)$ and a boundary condition $\eta \in \{0, 1\}^{\partial_V U}$, define the conditional measure of $\lambda$ on $\overline U : = U \cup \partial_V U $ by 
\[
\lambda_U^\eta (\sigma_v  = \cdot, v \in \overline U)  := \lambda(\sigma = \cdot \text{ on $U$}\mid \sigma = \eta \text{ on $\partial_V U$}),
	\]
provided $\lambda ( \sigma = \eta \text{ on $\partial_V U$} ) >0$, in which case we say $\eta$ to be admissible. 

\begin{lem}[\cite{vdB}, Lemma 11.2.1]
	\label{thm: couple}
	Let $\lambda$ be a Markov random field on $H$. Let $ v \in U \subseteq V(H)$ and let $\eta^1,\eta^2 \in \{0,1\}^{\partial_V U }$ be two admissible boundary conditions. Let $\Pi$  be the product measure $ \lambda_U^{\eta^1} \otimes \lambda_U^{\eta^2} $  on $\{0, 1\}^{\overline U} \times \{0, 1\}^{\overline U}$.	Then
	\[ \big|   \lambda_U^{\eta^1}( \sigma_v = 1) -   \lambda_U^{\eta^2}( \sigma_v = 1)  \big |  \le \Pi \Big ( \exists \textnormal{ a path of disagreement from $v$ to $\partial_V U$}\Big). \]
We say a pair  of spin configurations $(\alpha^{1}_u, \alpha^{2}_u)_{u \in \overline U }$ has a path of disagreement from $v$ to $\partial_V U$  if there exists a finite path of adjacent vertices $v_0 = v, v_1, v_2, \ldots, v_\ell$  in $\overline U$ such that $v_\ell \in \partial_V U$ and $\alpha^{1}_{v_i} \ne \alpha^{2}_{v_i}$ for all $1 \le i \le \ell$. 
\end{lem}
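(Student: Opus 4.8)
The plan is to build an explicit coupling of $\lambda_U^{\eta^1}$ and $\lambda_U^{\eta^2}$ by revealing the spins of $U$ one vertex at a time, maximally coupling each new spin given everything revealed so far. The Markov property will force the two configurations to agree at a vertex as soon as they agree at all of its already-revealed neighbours, so any disagreement must travel along a connected path back to $\partial_V U$, which is the only place where the two boundary conditions $\eta^1,\eta^2$ can differ. A final comparison of this coupling with the product $\Pi$ then yields the stated bound.

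In detail, I would fix an arbitrary ordering $u_1,\dots,u_m$ of $U$ and define a pair $(\sigma^1,\sigma^2)$ recursively, with $\sigma^i\equiv\eta^i$ on $\partial_V U$. Suppose values have been assigned to $u_1,\dots,u_{k-1}$ in both coordinates; let $\mu_k^i$ be the $\lambda_U^{\eta^i}$-conditional law of the spin at $u_k$ given those values, and draw $(\sigma^1_{u_k},\sigma^2_{u_k})$ from a maximal coupling of $\mu_k^1$ and $\mu_k^2$. Call the resulting joint law $\Pi^*$; by construction each of its marginals is the corresponding conditional measure $\lambda_U^{\eta^i}$.

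The heart of the argument is propagation of agreement. By the Markov property of $\lambda$, the law $\mu_k^i$ depends only on the values already assigned to the neighbours of $u_k$ inside $\{u_1,\dots,u_{k-1}\}\cup\partial_V U$; hence if $\sigma^1$ and $\sigma^2$ agree at all of those neighbours, then $\mu_k^1=\mu_k^2$ and the maximal coupling puts $\sigma^1_{u_k}=\sigma^2_{u_k}$ almost surely. Contrapositively, every disagreeing vertex of $U$ has an already-revealed disagreeing neighbour. Starting from $v$ and repeatedly passing to such a neighbour produces a chain of disagreeing vertices whose indices strictly decrease while they stay inside $U$, so the chain must terminate, and by the previous sentence it can only terminate at a disagreeing vertex $b\in\partial_V U$ (i.e.\ one with $\eta^1_b\neq\eta^2_b$). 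Thus $\{\sigma^1_v\neq\sigma^2_v\}$ forces the existence of a path of disagreement from $v$ to $\partial_V U$, and since
\[
\bigl|\lambda_U^{\eta^1}(\sigma_v=1)-\lambda_U^{\eta^2}(\sigma_v=1)\bigr|=\bigl|\E_{\Pi^*}[\sigma^1_v-\sigma^2_v]\bigr|\le\Pi^*(\sigma^1_v\neq\sigma^2_v),
\]
we get the asserted bound with $\Pi$ replaced by $\Pi^*$.

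It remains to pass from $\Pi^*$ to the product measure $\Pi$. Here I would run the analogous vertex-by-vertex exploration for $\Pi$ (in which the two coordinates are independent) alongside the one for $\Pi^*$, and couple the two explorations step by step using the elementary inequality $|a-b|\le a(1-b)+b(1-a)$ for $a,b\in[0,1]$: at each vertex the conditional probability of a $\Pi^*$-disagreement, $\|\mu_k^1-\mu_k^2\|_{\mathrm{TV}}=|\mu_k^1(1)-\mu_k^2(1)|$, is bounded by the conditional probability of a $\Pi$-disagreement. Carried through the exploration, this makes the disagreement cluster of $v$ under $\Pi^*$ a subset of the one under $\Pi$, so $\Pi^*(\exists\text{ disagreement path})\le\Pi(\exists\text{ disagreement path})$, as desired. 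I expect this last step to be the most delicate point: one must keep the two explorations coupled tightly enough that the cluster inclusion genuinely holds, since the conditional laws $\mu_k^i$ depend on the revealed neighbour values and these have to be tracked along the way. The conceptual core — agreement propagating because of the Markov property — is by contrast immediate.
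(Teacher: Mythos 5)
The paper does not prove this lemma at all --- it is quoted directly from van den Berg (Lemma~11.2.1 of \cite{vdB}) --- so there is no internal proof to compare against; I can only assess your argument on its own terms, and it has a genuine gap at precisely the step you call ``immediate''. You claim that, by the Markov property, the conditional law $\mu_k^i$ of $\sigma_{u_k}$ given the spins already revealed on $\{u_1,\dots,u_{k-1}\}\cup\partial_V U$ depends only on the revealed \emph{neighbours} of $u_k$. This is false: the Markov property controls the conditional law of $\sigma_{u_k}$ given a \emph{full} configuration on the complement, not a partial one. When some neighbours of $u_k$ are still unrevealed, they must be integrated out, and the resulting law depends on the whole revealed configuration (for the Ising chain $a-b-c$ with $a$ revealed before $c$, the law of $\sigma_c$ given $\sigma_a$ alone genuinely depends on $\sigma_a$ even though $a$ and $c$ are not adjacent). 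Consequently, under a fixed revelation order a vertex can disagree even though all of its already-revealed neighbours agree --- the two boundary conditions can still reach it through unrevealed vertices --- so your backward chain of disagreeing neighbours need not terminate at $\partial_V U$, and the path-of-disagreement conclusion fails. The standard repair, and what van den Berg actually does, is to make the order \emph{adaptive}: explore outward from the boundary vertices where $\eta^1\neq\eta^2$, always revealing a vertex adjacent to the current disagreement cluster and using the maximal coupling there; once no unrevealed vertex is adjacent to the cluster, the unrevealed region $W$ has $\partial_V W$ contained in the agreement set, and only \emph{then} does the global Markov property (applied to all of $W$ at once) show that the two conditional laws on $W$ coincide, so the remaining spins can be coupled to agree identically.

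The second step --- replacing your coupling $\Pi^*$ by the independent coupling $\Pi$ --- is also only sketched, and the difficulty you flag is real rather than cosmetic. The inequality $|p^1-p^2|\le p^1(1-p^2)+p^2(1-p^1)$ compares conditional disagreement probabilities evaluated at the \emph{same} revealed history, but the histories produced by $\Pi^*$ and by $\Pi$ are different random objects with different laws (and in the adaptive scheme even the exploration order is history-dependent), so the claimed inclusion of disagreement clusters does not follow from running the two explorations ``alongside'' each other. One needs an actual stochastic-domination lemma for adapted $\{0,1\}$-valued processes, as in van den Berg--Maes; that comparison is the real content behind getting the \emph{product} measure on the right-hand side. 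As written, both halves of the argument need repair before the stated bound is established.
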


We are going to apply the above result to the monomer-dimer model. A matching $ M \in \mathcal{M}_G$ can be naturally viewed as a configuration $\alpha \in \{0, 1\}^{E(G)}$ where $\alpha_e = 1$ if and only if the edge $e$ belongs to $M$. 

To use the setting of Lemma~\ref{thm: couple}, the monomer-dimer model can be recast as a probability measure on the spins indexed by vertices of the line graph of $G$, denoted by $\widehat G$, as follows.
	\begin{equation}
		\label{eq: factorize}
		\widehat \mu( \alpha) \propto \prod_{e \in  V(\widehat G) } e^{ w_e   \alpha_e}\prod_{x\in V(G)}  \left(\mathbf{1}_{\{\sum_{e: e\ni x} \alpha_e = 1\}} + e^{\nu_x} \mathbf{1}_{\{\sum_{e: e\ni x}  \alpha_e = 0\}}\right), \quad \alpha \in \{0, 1\}^{V(\widehat G)}.
	\end{equation}
The measure $\widehat \mu$  admits a clique factorization in $\widehat{G}$, since, for each $x \in V(G)$, the set of all edges of $G$ incident on $x$ forms a clique in $\widehat G$. Hence,  $ \widehat \mu$, for a fixed realization of weights,  defines a Markov random field with respect to $\widehat{G}$.

 For $F \subseteq E(G)$, by the  boundary of $F$,  we mean its outer edge boundary in $G$,  which is defined as $\partial_E F = \{ e \in E(G) \setminus F: \exists e' \in F, e \sim e'\}$, where $e \sim e'$ means that the edges $e$ and $e'$ are adjacent in $G$. Also, let $\overline F =  F \cup \partial_E F$. In the context of matching, a boundary condition $\eta \in \{0, 1\}^{\partial_E F}$ is  admissible if $\{e \in \partial_E F:  \eta_e  = 1\}$ is a matching in $G$. In such case, let $ \M^\eta_F$ denote the random matching of $(V(G), \overline F)$  drawn from the conditional Gibbs measure $\widehat \mu^\eta_{F}$. 
Lemma~\ref{thm: couple},  specialized to the monomer-dimer model, now reads as follows. 
\begin{lem}\label{lem:coupling}  
Consider a disordered monomer-dimer model on a finite graph $G$ with a fixed realization of the weights.  Let $e \in F \subseteq E(G)$  and  let $\eta^1, \eta^2$ be  two admissible boundary conditions. Suppose that the matchings $ \M^{\eta^1}_F$ and $\M^{\eta^2}_F$ are drawn independently from $\widehat \mu^{\eta^1}_{F}$ and $\widehat \mu^{\eta^2}_{F}$ respectively and set $\M_1 = \M^{\eta^1}_F$ and $ \M_2 =  \M^{\eta^2}_F$ for brevity. 
Let $\Pi$  be the distribution $(\M_1, \M_2)$.  Then 
	\[ \big|   \la \mathbf{1}_{ \{ e \in \M_1 \} }   \ra -  \la \mathbf{1}_{ \{ e \in \M_2 \} } \ra   \big |  \le \Pi \big ( \exists \textnormal{ a path of disagreement from $e$ to $\partial_E F$}\big). \]
A path of disagreement from $e$ to $\partial_E F$ means a finite sequence of edges $e_0 = e \sim e_1 \sim \cdots \sim e_\ell$ such that $e_\ell \in \partial_E F$ and 
$e_i \in \M_1 \oplus \M_2,$ the symmetric difference of $\M_1$ and $\M_2$, for each $1 \le i \le \ell$.
\end{lem}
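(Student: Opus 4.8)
The plan is to obtain Lemma~\ref{lem:coupling} as a direct specialization of the general coupling bound in Lemma~\ref{thm: couple}, applied to the Markov random field $\widehat\mu$ of \eqref{eq: factorize} on the line graph $\widehat G$ of $G$; the work is entirely in matching the objects on the two sides through the line-graph dictionary. Recall that $V(\widehat G) = E(G)$ and that two vertices of $\widehat G$ are adjacent precisely when the corresponding edges of $G$ are adjacent. I would first record the two consequences of this that are needed: for any $F \subseteq E(G)$, the outer vertex boundary of $F$ taken in $\widehat G$ is exactly the outer edge boundary $\partial_E F$ taken in $G$, so that $\overline F = F \cup \partial_E F$ plays the role of $\overline U = U \cup \partial_V U$; and a path $v_0, v_1, \dots, v_\ell$ of consecutively adjacent vertices of $\widehat G$ is the same thing as a sequence $e_0 \sim e_1 \sim \cdots \sim e_\ell$ of consecutively adjacent edges of $G$. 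We then apply Lemma~\ref{thm: couple} with $H = \widehat G$, $\lambda = \widehat\mu$, $U = F$, and $v = e$; this is legitimate because, as noted right after \eqref{eq: factorize}, $\widehat\mu$ is a Markov random field with respect to $\widehat G$.

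Next I would check that the notions of admissibility and of conditional measure used here agree with those in Lemma~\ref{thm: couple}. From the product form \eqref{eq: factorize}, $\widehat\mu$ gives positive mass to a configuration $\alpha \in \{0,1\}^{E(G)}$ if and only if every vertex-factor is nonzero, i.e.\ if and only if $\alpha$ encodes a matching of $G$; hence $\widehat\mu(\sigma = \eta \text{ on } \partial_E F) > 0$ exactly when $\{ e \in \partial_E F : \eta_e = 1 \}$ is a matching in $G$, which is the stated admissibility condition for matchings. For such $\eta$ the conditional measure $\lambda_U^\eta$ of Lemma~\ref{thm: couple} is by definition the conditional Gibbs measure $\widehat\mu^\eta_F$ on $\overline F$, so a sample $(\sigma_u)_{u \in \overline U}$ from $\lambda_U^\eta$ is precisely the random matching $\M^\eta_F$ of $(V(G), \overline F)$; in particular $\lambda_U^{\eta^i}(\sigma_e = 1) = \la \mathbf{1}_{\{ e \in \M_i \}} \ra$ with $\M_i = \M^{\eta^i}_F$ for $i = 1,2$, and the product measure $\Pi = \widehat\mu^{\eta^1}_F \otimes \widehat\mu^{\eta^2}_F$ is the law of the independent pair $(\M_1, \M_2)$.

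Finally I would translate the disagreement event. A path of disagreement from $v = e$ to $\partial_V U$ in the sense of Lemma~\ref{thm: couple} is a sequence of vertices $v_0 = e, v_1, \dots, v_\ell$ of $\overline U$, each consecutive pair adjacent in $\widehat G$, with $v_\ell \in \partial_V U$ and $\alpha^1_{v_i} \ne \alpha^2_{v_i}$ for every $1 \le i \le \ell$. Under the dictionary above, $\alpha^1_{v_i} \ne \alpha^2_{v_i}$ means that edge $v_i$ lies in the symmetric difference $\M_1 \oplus \M_2$, so this event is exactly the existence of a sequence of edges $e_0 = e \sim e_1 \sim \cdots \sim e_\ell$ with $e_\ell \in \partial_E F$ and $e_i \in \M_1 \oplus \M_2$ for each $1 \le i \le \ell$ --- the path of disagreement from $e$ to $\partial_E F$ described in the statement. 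Plugging these identifications into Lemma~\ref{thm: couple} gives
\[ \big| \la \mathbf{1}_{\{ e \in \M_1 \}} \ra - \la \mathbf{1}_{\{ e \in \M_2 \}} \ra \big| \;\le\; \Pi\big( \exists \text{ a path of disagreement from } e \text{ to } \partial_E F \big), \]
which is the assertion of the lemma. There is no substantive obstacle here: the content has already been isolated in Lemma~\ref{thm: couple} and in the observation that $\widehat\mu$ is Markov on $\widehat G$, and the step needing the most care is simply verifying the two boundary/adjacency identifications above, together with the fact that, by the Markov property, conditioning on the spins outside $F$ reduces to conditioning on the spins on $\partial_E F$ --- which is exactly what makes $\lambda_U^\eta$ depend on $\eta$ alone.
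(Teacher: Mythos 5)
Your proposal is correct and follows exactly the route the paper intends: the paper gives no separate proof of Lemma~\ref{lem:coupling}, presenting it as the direct specialization of Lemma~\ref{thm: couple} to the Markov random field $\widehat\mu$ on the line graph, which is precisely the dictionary you spell out. All the identifications you verify (boundary, admissibility, conditional measures, disagreement paths) are accurate.
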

For matching, disagreement paths have a special structure as they can never intersect. Indeed, for a pair of matchings $M_1$ and $M_2$ of $G$, $M_1 \oplus M_2$ is the union of disjoint self-avoiding paths (or cycles).  If not, then we can find three distinct edges in  $M_1 \oplus M_2$ who share a common vertex. But then at least two of these edges must belong to either $M_1$ or $M_2$, which is impossible. As a result, any disagreement path is (vertex) self-avoiding and  
 there can be at most two disagreement paths from $e$ to $\partial_EF$, originating at either endpoint of $e$. Moreover, the consecutive edges on any disagreement path must belong alternatively  to $M_1$ and $M_2$.

\subsection{Proof of Proposition~\ref{thm: corr_decay}}

Fix $e = (xy) \in E(G), R \ge 1$ and let $H$ be a subgraph containing $\bB^e_R$. Let $\partial H$ be the outer edge-boundary of the edge set $E(H)$ in $G$. That is, $\partial H = \partial_E E(H)$.

If $\partial \bB^e_{R} =  \emptyset$, then $\bB^e_R = H = G$ and the proposition holds trivially as the difference between the two Gibbs expectations is  identically zero. So, we assume that  $\partial \bB^e_{R} \ne \emptyset$.  For notational simplicity, let us write $\M_{H}^\eta$ for $\M_{E(H)}^\eta$, where $\eta \in \{0, 1\}^{\partial H}$ is an admissible boundary condition. 
 
By Markov property, we have 
\[ \left \la \mathbf{1}_{\{e\in \M_G\}}\right\ra  = \sum_\eta \big  \la \mathbf{1}_{\{e\in \M_{H}^\eta\}} \big \ra \mu_{\partial H}(\eta),  \]
where $\mu_{\partial H}$ is the marginal of $\mu_G$ on  $\partial H$ and  the sum is over all possible admissible boundary conditions $\eta$. It suffices to show that 
there exists an event $A \in \calF^{ V(\bB^e_{R}) \setminus \{x, y\} \times  E(\bB^e_{R}) \setminus \{e\}},$ independent of the choice of $\eta$, with $\prob(A) \geq 1 - C e^{-c R}$, such that on the event $A$, 	\[
	\max_\eta \big |\big\la \mathbf{1}_{\{e\in \M_H^\eta \}}\big\ra -  \big\la \mathbf{1}_{\{e\in \M_{H}^0\}}\big\ra \big | \leq Ce^{-cR},\]
and the constants $c$ and $C$ can be chosen depending only on $D$ and $\varphi$.
To achieve this, we  fix an admissible boundary condition $\eta$ on $\partial H$. Let $\Pi$ be the independent coupling of $\big(\M^\eta_H, \M^{0}_H \big)$. By Lemma~\ref{lem:coupling}, we deduce that 
\begin{align}
	 \big | \big \la \mathbf{1}_{\{e\in \M_H^\eta \}}\big\ra -  \big \la \mathbf{1}_{\{e\in \M_{H}^0\}}\big\ra \big | &\le \Pi \big ( \exists \text{ a path of disagreement w.r.t.\ $(\M^\eta_H, \M^{0}_H)$  from $e$ to $\partial H$}\big) \nonumber \\
	&\le 	\Pi \big ( \exists \text{ a path of disagreement w.r.t.\ $(\M^\eta_H, \M^{0}_H)$  from $e$ to $\partial \bB^e_R$}\big). \label{eq:disagree}
\end{align}

	Let $a >0$ be sufficiently large. We call an edge $f$ of  $\bB^e_R$ to be `good' if (i) $f$ is at least distance $3$ away from $e$ and $\partial \bB^e_R$ and (ii) 
	the weights on $f$ and all the edges adjacent  to $f$  are bounded above by $a$, and the weights on the end-vertices of these edges are bounded below by $-a$.  Let  $c_1 > 0$ be a constant  to be chosen later and let $A$ be the event that all self-avoiding paths from $e$ to $\partial \bB_e^R$ contain at least $c_1R$ many good edges. From the definition of good edges, it is clear that $A$  is measurable with respect to $\calF^{ V(\bB^e_R) \setminus \{x, y\} \times  E(\bB^e_R) \setminus \{e\}}.$ 
	
	We claim that for $a>0$ sufficiently large and for $c_1>0$ sufficiently small, there exist constants $c, C> 0$, depending only on $D$ and $\varphi$, such that
	\begin{equation}\label{eq:exception_set_prob}
	\prob(A) \ge 1  -  Ce^{-cR}.
	\end{equation}
To prove this claim, let us fix a self-avoiding path from $e$ to $\partial \bB^e_{R}$ of  length of $\ell \ge R$. By a greedy search in the subgraph $\bB^e_{R-3} \setminus \bB^e_{3}$, one can  find a deterministic set of edges $S$ of size at least $\ell/C_1$ on that path, where $C_1 \ge 1$ depends only on $D$, such that the $2$-neighborhoods of the edges in $S$ are pairwise disjoint and do not intersect with $e$ and $\partial \bB^e_R$. Note that the events $\{ f \text{ is good} \}_{f \in S}$ are independent and  for any $\delta>0$, there exists $a_0> 0$, depending only on $D$ and $\varphi$, such that $\prob( f \text{ is good} ) \ge 1 - \delta$ for all $f \in S$ and $a \ge a_0$. From Chernoff bound, for any constant $C_3 > 0$, we can choose $\delta>0 $ sufficiently small which can be guaranteed by choosing $a$ sufficiently large,  such that 
	\begin{equation}\label{eq:good_edge1}
	\prob\left(\text{at least half of the edges in $S$ are good}\right) \geq 1 - C_2 e^{- C_3 |S|} \ge 1 - C_2 e^{- (C_3/ C_1) \ell} ,
	\end{equation}
	for some absolute constant $C_2$. Choose $C_3$  so that \eqref{eq:good_edge1} is at least $1 - C_2 D^{-3\ell}$ and then
 take $c_1 = 1/(2C_1)$ in the definition of the event $A$. Since there are at most $2D^\ell$  self-avoiding paths from $e$ of length $\ell$, by a union bound, we have 
\[ \prob(A^c) \le \sum_{\ell  = R}^\infty  2 D^\ell  \cdot C_2 D^{-3\ell} \le C e^{-cR}, \]
for appropriate choices of $c$ and $C$.

Next, we claim that if  $A$ holds, then  there exist at least $c_1 R$  edge-disjoint cut-sets consisting of good edges which separate $e$ from $\partial \bB_R^e$.  To argue that let us introduce the first-passage distance between any two vertices $u, v$ as the minimum number of good edges on a (self-avoiding) path between $u$ and $v$. Now for $k \ge 0,$ let $\mathcal{B}_k$  be the set of vertices with the first-passage distance at most $k$ from  either of the endpoints of $e$. Clearly, $\mathcal{B}_{k} \subseteq \mathcal{B}_{k+1}$ and  on the event $A$, we have $\mathcal{B}_{c_1 R -1} \subseteq V(\bB^e_R).$ For each $1 \le k \le c_1R$,  let $\mathcal{C}_k$ be set of the boundary edges of $\mathcal{B}_{k-1}$, i.e., all edges of the form $(uv)$ with $u \in \mathcal{B}_{k-1}$ and $v \not \in \mathcal{B}_{k-1}$. Obviously, $\mathcal{C}_{k}$ is a cut-set since any path escaping from $\mathcal{B}_{k-1}$ to outside must use one of the edges in $\mathcal{C}_k$. Moreover, all edges of $\mathcal{C}_{k}$ must be good. To see this, fix an edge $(uv) \in \mathcal{C}_{k}$ with $u \in \mathcal{B}_{k-1}$ and $v \not \in \mathcal{B}_{k-1}$. If this edge is not good, we can reach $v$ from $e$ using only $k-1$ good edges
contradicting the fact  $v \not \in \mathcal{B}_{k-1}$.

We now fix a realization of the weights such that $A$ holds and proceed to bound the probability in \eqref{eq:disagree}. As mentioned before, there can be at most two paths of disagreement from $e = (xy)$ to $\partial \bB_R^e$. For definiteness, let us fix one of them. Denote it by $\gamma$, where  $\gamma(0) = e \sim \gamma(1) = e_1 \sim \gamma(2) = e_2 \sim \cdots $ is the enumeration of adjacent edges of $\gamma$ starting from~$e$.  Define the random times $\tau_0 = 0 \le \tau_1 < \tau_2 < \cdots $ such that $\tau_k = \inf \{ i : \gamma(i) \in \mathcal{C}_k \} \in \mathbb{Z}_+ \cup \{ \infty\}$. Clearly, to reach  $\partial \bB^e_R$, the path $\gamma$ needs to cross all the cutsets $\mathcal{C}_1,  \mathcal{C}_2, \ldots, \mathcal{C}_{c_1 R}$ at least once, which implies that $\tau_k < \infty $ for each $1 \le k \le c_1 R$. Therefore,  
\[   \Pi ( \text{$\gamma$ reaches $\partial \bB^e_R$}  )  \le \prod_{k=0}^{c_1 R  - 1} \Pi \big(  \tau_{k+1} < \infty  \mid \tau_k < \infty \big). \]
To bound $\Pi \big(  \tau_{k+1} < \infty  \mid \tau_k < \infty \big)$,  we condition on a finite value of $\tau_k$, the edges  $\gamma(i), 0 \le  i \le \tau_k$ such that $\gamma$ enters $\mathcal{C}_k$ at time $\tau_k$ for the first time,  and on the event whether $e \in \M_H^\eta$ or  $e \in \M_H^0$ (which, in turn, determines whether $\gamma(i) \in  \M_H^\eta$ or  $\gamma(i) \in \M_H^0$ for each $i \ge 1$ by the alternating property of the path of disagreement) and we seek to bound the conditional  probability that $\gamma$ can be extended one step further after $\tau_k$. 

For definiteness, suppose that $e_{\tau_k} \in  \M_H^\eta \setminus  \M_H^0$. Let $z$ be the end vertex of $e_{\tau_k}$, which is not shared by $e_{\tau_k -1}$. Let $f_1 = (z z_1) ,\ldots, f_d = (z z_d)$ be the set of those edges incident to $z$ which do not share a common vertex with any of the edges  $\gamma(i), 0 \le  i \le \tau_k-1$. In particular, $e_{\tau_k}$ is excluded. 

 If  $d = 0$,  then $\gamma$ can not be extended and the conditional probability is zero. So, assume that $1 \le d \le D$. Since $e_{\tau_k}$ is a good edge, each $f_i$ lies inside $\bB^e_R$. To be able to extend $\gamma$ one step further,  one of the edges $f_1, f_2, \ldots, f_d$ must belong to $\M_H^0$ and none of them must belong to $\M_H^\eta$. We further condition on the information whether $ f \in \M_H^0$ and $ f \in \M_H^\eta$ for every edge $f \in E(H) \setminus \{ f_1, f_2, \ldots, f_d \}$ that are not on $\gamma(i), 0 \le  i \le \tau_k$ (we have already conditioned on  them before). Let $Q \subseteq \{1, 2, \ldots, d\}$ be the set of the indices of the vertices among $z_1, z_2, \ldots, z_d$ which remain unmatched in $ \M_H^0 \setminus \{f_1, f_2, \ldots, f_d\}$ after the conditioning. Since $\M_H^0$ and $\M_H^\eta$ are independent, the conditional probability in question is bounded above by the probability that one of these edges $f_1, f_2, \ldots, f_d$ belongs to $\M_H^0$ after we condition on  the information whether $f \in \M^0_H$ for all edges $f$  of $H$ except $f_1, f_2, \ldots, f_d$. This is given by 
\begin{equation}\label{eq:star_prob}
	\frac{\sum_{i \in Q} \exp(w_{f_i} + \sum_{j \in Q \setminus \{i\} } \nu_{z_j})}{\exp(\nu_z + \sum_{j \in Q} \nu_{z_j}) + \sum_{i \in Q} \exp(w_{f_i} + \sum_{j \in Q \setminus \{i\} } \nu_{z_j})} =  \frac{\sum_{i \in Q} \exp(w_{f_i}  - \nu_{z_i} - \nu_z)}{1 + \sum_{i \in Q} \exp(w_{f_i}  -  \nu_{z_i} - \nu_z)}.
	\end{equation}
	Since $e_{\tau_k}$ is good, the edge and vertex weights of $f_1, \ldots, f_d$ are all bounded above and below by $a$ and $-a$ respectively, which implies that \eqref{eq:star_prob} is bounded above by $De^{3a}(1+De^{3a})^{-1}$. Hence, 
	\[ \Pi \big(  \tau_{k+1} < \infty  \mid \tau_k < \infty \big) \le \frac{De^{3a}}{1+De^{3a}},\]
	and consequently, on the event $A$, 
	\[   \Pi ( \text{$\gamma$ reaches $\partial \bB_R^e$}  ) \le \left ( \frac{De^{3a}}{1+De^{3a}} \right )^{c_1 R } \le Ce^{-cR},\]
	for some constants $c, C>0$ which can be chosen depending only on $D$ and $\varphi$, as promised. This completes the proof of the proposition.  \qed

  \subsection{Proof of Proposition~\ref{prop: corr_decay}}
In this section, we sketch a proof of the following bounds on
edge-to-edge and vertex-to-vertex correlations. In particular, we will prove Proposition~\ref{prop: corr_decay} follows.  As the arguments are quite similar to the previous section,  we will omit some details.
\begin{prop}\label{prop:ee_vv_local}
    Consider the disordered monomer-dimer model on a finite graph $G$  with maximum degree $D$.	
    There exist constants $c, C>0$ depending on $D$ and the weight distributions such that the following holds. Let $R \ge 1$ and $e, e' \in E(G)$ and $x, x' \in V(G)$. 
Suppose that $B_R(e, e')$ and $B_R(x, x')$ are arbitrary subgraphs of $G$ containing $\bB_R^e \cup \bB_R^{e'}$  and  $\bB_R^x \cup \bB_R^{x'}$ respectively. Then for any $\delta_1, \delta_2 \in \{0, 1\}$, we have 
\begin{align}
    \E \sup_{w_e, w_{e'}} \left |\la \mathbf{1}_{\{e \in \M_G  \}} = \delta_1,  \mathbf{1}_{\{e' \in \M_G  \}} = \delta_2 \ra -  \la \mathbf{1}_{\{e \in \M_{B_R(e, e')}  \}} = \delta_1,  \mathbf{1}_{\{e' \in \M_{B_R(e, e')}  \}} = \delta_2 \ra  \right | &\leq Ce^{-cR}, \label{eq:cor_tv_2edge}\\
        \E \sup_{\nu_x, \nu_{x'}} \left |\la \mathbf{1}_{\{x \not \in \M_G  \}} = \delta_1,  \mathbf{1}_{\{x' \not \in \M_G  \}} = \delta_2 \ra -  \la \mathbf{1}_{\{x \not  \in \M_{B_R(x, x')}  \}} = \delta_1,  \mathbf{1}_{\{x' \in \M_{B_R(x, x')}  \}} = \delta_2 \ra  \right | &\leq Ce^{-cR}. \label{eq:cor_tv_2vertex}
\end{align}
\end{prop}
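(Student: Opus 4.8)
The plan is to mimic the proof of Proposition~\ref{thm: corr_decay} almost verbatim, replacing the single-edge observable by the joint observable of the two edges $e, e'$ (resp.\ the two vertices $x, x'$). First I would recast the monomer–dimer model as the Markov random field $\widehat\mu$ on the line graph $\widehat G$ as in \eqref{eq: factorize}, and apply the coupling estimate of Lemma~\ref{thm: couple} in the following form: for $F = E(B_R(e,e'))$ and any two admissible boundary conditions $\eta^1, \eta^2$ on $\partial_E F$, if $\Pi$ is the independent coupling of $(\M^{\eta^1}_F, \M^{\eta^2}_F)$, then
\[
\left| \la \1_{\{e \in \M^{\eta^1}_F\}} = \delta_1,\ \1_{\{e' \in \M^{\eta^1}_F\}} = \delta_2 \ra - \la \1_{\{e \in \M^{\eta^2}_F\}} = \delta_1,\ \1_{\{e' \in \M^{\eta^2}_F\}} = \delta_2 \ra \right| \le \Pi\big( \exists \text{ a path of disagreement from } \{e, e'\} \text{ to } \partial_E F \big),
\]
because changing the boundary condition affects the joint law of $(\1_{e\in M}, \1_{e'\in M})$ only through disagreement percolation reaching the boundary from $e$ or from $e'$. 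Then, writing $\la \1_{\{e\in\M_G\}} = \delta_1, \1_{\{e'\in\M_G\}} = \delta_2\ra = \sum_\eta \la \1_{\{e\in\M^\eta_F\}} = \delta_1, \1_{\{e'\in\M^\eta_F\}} = \delta_2\ra\, \mu_{\partial_E F}(\eta)$ by the Markov property and bounding the difference by $\max_\eta | \cdots - (\cdot)^{\eta = 0}|$, it suffices to control the disagreement-percolation probability uniformly in $\eta$.

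Second, I would run exactly the good-edge/cutset construction of Section~2.2, but now requiring the cutsets to separate $\{e, e'\}$ jointly (equivalently, working with $\bB^e_R \cup \bB^{e'}_R$): define the first-passage distance from $u$ to the pair $\{e, e'\}$ using good edges, let $\mathcal B_k$ be the set of vertices within first-passage distance $k$, and let $\mathcal C_k = \partial \mathcal B_{k-1}$; on a high-probability event $A \in \calF^{\,\cdots}$ (with $\prob(A) \ge 1 - Ce^{-cR}$ by the same Chernoff/union-bound argument over self-avoiding paths, using the bounded-degree hypothesis to control the number of such paths), every path from $e$ or $e'$ to $\partial(\bB^e_R \cup \bB^{e'}_R)$ must cross $c_1 R$ edge-disjoint cutsets of good edges. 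The one-step extension estimate \eqref{eq:star_prob} is purely local and goes through unchanged, giving $\Pi(\tau_{k+1} < \infty \mid \tau_k < \infty) \le De^{3a}(1 + De^{3a})^{-1}$ along either disagreement path, hence $\Pi(\exists \text{ disagreement path to the boundary}) \le 2(De^{3a}/(1+De^{3a}))^{c_1 R} \le Ce^{-cR}$ on $A$. Taking the $\varphi$-bounded reduction as in Section~2 (throwing away a bad weight set of probability $Ce^{-cR}$) then upgrades this to the $\E \sup_{w_e, w_{e'}}$ bound in \eqref{eq:cor_tv_2edge}, exactly as Lemma~\ref{lem: corr_decay_Lp} follows from Proposition~\ref{thm: corr_decay}; the supremum over $w_e, w_{e'}$ causes no trouble because the event $A$ is measurable with respect to the weights away from $e$ and $e'$, and the Lemma~\ref{thm: couple} bound on $A$ holds uniformly in those two weights. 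The vertex-to-vertex bound \eqref{eq:cor_tv_2vertex} is deduced from the edge version by writing $\1_{\{x\not\in M\}} = \sum_{i} \1_{\{e_i \in M\}}$ over the (at most $D$) edges at $x$ and expanding the product, exactly as in the proof of Corollary~\ref{cor: corr_decay_vertex}, at the cost of only a constant factor and a shift $R \mapsto R-1$.

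Finally, Proposition~\ref{prop: corr_decay} follows by choosing $B_R(e,e') = G$ is not allowed (then the bound is trivial) — rather, one applies \eqref{eq:cor_tv_2edge} with $\delta_1 = \delta_2 = 1$ and $R = \lfloor \mathrm{dist}(e,e')/3 \rfloor$ so that $\bB^e_R$ and $\bB^{e'}_R$ are disjoint; then $\M_{B_R(e,e')}$ can be taken to be the Gibbs measure on the disjoint union $\bB^e_R \sqcup \bB^{e'}_R$, under which $\1_{\{e\in\M\}}$ and $\1_{\{e'\in\M\}}$ are independent, so $\la \1_{\{e\in\M, e'\in\M\}}\ra$ factorizes, and combining two applications of \eqref{eq:cor_tv_2edge} (one joint, two marginal, via Theorem~\ref{thm: corr_decay_Lp}) with the triangle inequality yields $\E|\la\1_{\{e\in\M_G, e'\in\M_G\}}\ra - \la\1_{\{e\in\M_G\}}\ra\la\1_{\{e'\in\M_G\}}\ra| \le Ce^{-c\,\mathrm{dist}(e,e')}$. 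The main obstacle, as in the single-point case, is the uniform-in-$\eta$ control of disagreement percolation when the weights are unbounded; but this is handled entirely by the $\varphi$-bounded truncation already set up in Section~2, so the two-point case requires no genuinely new idea beyond carefully tracking that the good-edge cutsets now must enclose a two-point (rather than one-point) source set, which only changes constants.
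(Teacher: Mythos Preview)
Your proposal is correct and follows essentially the same route as the paper: invoke the finite-dimensional version of Lemma~\ref{thm: couple} (Lemma~11.2.1 in \cite{vdB}) to bound the difference of the two-edge marginals by the probability of a disagreement path from $\{e,e'\}$ to the boundary, then rerun the good-edge/cutset argument of Proposition~\ref{thm: corr_decay} verbatim with the good edges now kept at distance $\ge 3$ from both $e$ and $e'$ (so that $A$ is measurable off $w_e,w_{e'}$, enabling the $\sup_{w_e,w_{e'}}$), and derive \eqref{eq:cor_tv_2vertex} from \eqref{eq:cor_tv_2edge} exactly as in Corollary~\ref{cor: corr_decay_vertex}. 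Your closing remark deriving Proposition~\ref{prop: corr_decay} by taking $B_R(e,e')=\bB^e_R\sqcup\bB^{e'}_R$ for disjoint balls and using independence to factorize is also precisely the content of the paper's Corollary~\ref{prop: corr_decay_stronger}.
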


\begin{proof}[Proof of Proposition~\ref{prop:ee_vv_local}]
    First, we note that Lemma~\ref{thm: couple} holds with greater generality. Indeed Lemma~11.2.1 of \cite{vdB} states that instead of considering the one-dimensional marginals of the measures $\lambda^{\eta_1}_U, \lambda^{\eta_2}_U$, one can compare their finite-dimensional marginals whose total variational distance can be bounded above by the $\Pi$-probability of having a path of disagreement from that finite set of vertices to the boundary $\partial_V U$. Applying it to two-dimensional marginal on the edges $e, e'$,  we can obtain the following analogous statement of Lemma~\ref{lem:coupling}: using the same notations as in the lemma, for $e,e'\in F$, one has
\[\begin{gathered}
\max_{\delta_1, \delta_2 \in \{0, 1\}} \big|   \la \mathbf{1}_{ \{ e \in \M_1 \} } = \delta_1,  \mathbf{1}_{ \{ e' \in \M_1 \} } = \delta_2   \ra -  
  \la \mathbf{1}_{ \{ e \in \M_2 \} } = \delta_1,  \mathbf{1}_{ \{ e' \in \M_2 \} } = \delta_2   \ra  \big | \\
 \le \Pi \big ( \exists \textnormal{ a path of disagreement from $\{e, e'\}$ to $\partial_E F$}\big).   
\end{gathered}
\]
This implies that, following the same arguments in Proposition~\ref{thm: corr_decay}, for any $e=(xy)$ and $e'=(x'y')$ and for any $R\geq 1$, there exists an event 
$A \in \mathcal{F}^{V(H)} \setminus \{x, y, x', y'\} \times E(V(H)) \setminus\{e, e'\}$ with $\prob(A) \geq 1 - Ce^{-cR}$ such that on the event $A$ one has
\begin{equation} \label{eq:bdd_edge_edge_prob_bd}
  \left |\la \mathbf{1}_{\{e \in \M_G  \}} = \delta_1,  \mathbf{1}_{\{e' \in \M_G  \}} = \delta_2 \ra -  \la \mathbf{1}_{\{e \in \M_{B_R(e, e')}  \}} = \delta_1,  \mathbf{1}_{\{e' \in \M_{B_R(e, e')}  \}} = \delta_2 \ra  \right | \leq Ce^{-cR}  
\end{equation}
for any subgraph $H$ satisfying $\bB^e_R \cup \bB^{e'}_R \subseteq H \subseteq G$. This implies \eqref{eq:cor_tv_2edge} after taking $H = B_R(e, e')$.

Also,  \eqref{eq:cor_tv_2vertex} also follows from \eqref{eq:bdd_edge_edge_prob_bd}
and from the fact that if $e_1, e_2, \ldots, e_d$ are the edges incident on $x$ then
\[
\mathbf{1}_{\{x\not\in \M_G\}} = \sum_{i=1}^d \mathbf{1}_{\{e_i \in \M_G\}}, \quad \mathbf{1}_{\{x\not\in \M_{B_R(x, x')}\}} = \sum_{i=1}^d \mathbf{1}_{\{e_i \in \M_{B_R(e, e')}\}}
\]
using arguments quite similar to Corollary~\ref{cor: corr_decay_vertex}. We omit the details.
\end{proof}
\begin{cor}\label{prop: corr_decay_stronger}
    Consider the disordered monomer-dimer model on a finite graph $G$  with maximum degree $D$.	
    There exist constants $c, C>0$ depending on $D$ and the weight distributions such that the following holds. For any edges $e, e'$ and vertices $x, x'$, we have
\begin{align} 
\label{eq: edge_edge_corr}
     \E \sup_{w_e, w_{e'}} |\left\la \mathbf{1}_{\{e\in \M_G, e' \in \M_G \}}\right\ra - \la \mathbf{1}_{\{e\in \M_{G}\}} \ra \la \mathbf{1}_{\{e'\in \M_{G}\}} \ra | &\leq Ce^{-c\mathrm{dist}(e, e')},\\
    \label{eq: edge_vertex_corr}
   \E \sup_{\nu_x, \nu_{x'}} |\left\la \mathbf{1}_{\{x \not \in \M_G, x' \not \in \M_G \}}\right\ra - \la \mathbf{1}_{\{x \not \in \M_{G}\}} \ra \la \mathbf{1}_{\{x' \not \in \M_{G}\}} \ra | &\leq Ce^{-c\mathrm{dist}(x, x')}
  \end{align}
\end{cor}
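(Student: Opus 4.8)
The plan is to derive Corollary~\ref{prop: corr_decay_stronger} from the two-point locality bound of Proposition~\ref{prop:ee_vv_local} together with the single-point locality bound of Lemma~\ref{lem: corr_decay_Lp}, via a ``conditional independence at distance $\asymp\mathrm{dist}(e,e')/2$'' argument. Fix edges $e,e'$ and set $d=\mathrm{dist}(e,e')$; we may assume $d\ge 4$, say, since otherwise the bound is trivial by choosing $C$ large. Put $R=\lfloor d/3\rfloor$ (any fixed fraction of $d$ smaller than $1/2$ works), so that $\bB^e_R$ and $\bB^{e'}_R$ are vertex-disjoint and, moreover, no edge of $\bB^e_R$ is adjacent to an edge of $\bB^{e'}_R$. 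The point is that once we localize the indicator $\mathbf 1_{\{e\in\M_G\}}$ to $\bB^e_R$ and $\mathbf 1_{\{e'\in\M_G\}}$ to $\bB^{e'}_R$, the two localized Gibbs expectations are computed on \emph{disjoint, non-interacting} subgraphs and hence, conditionally on the weights, they factorize: writing $H=\bB^e_R\cup\bB^{e'}_R$ (a disjoint union), the restricted Gibbs measure $\mu_H$ is the product measure $\mu_{\bB^e_R}\otimes\mu_{\bB^{e'}_R}$, so
\[
\big\la \mathbf 1_{\{e\in\M_H\}}\,\mathbf 1_{\{e'\in\M_H\}}\big\ra
=\big\la \mathbf 1_{\{e\in\M_{\bB^e_R}\}}\big\ra\,\big\la \mathbf 1_{\{e'\in\M_{\bB^{e'}_R}\}}\big\ra .
\]

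The rest is a triangle-inequality bookkeeping exercise. First, apply Proposition~\ref{prop:ee_vv_local} with $\delta_1=\delta_2=1$ and $B_R(e,e')=H$ to replace $\la\mathbf 1_{\{e\in\M_G,e'\in\M_G\}}\ra$ by $\la\mathbf 1_{\{e\in\M_H\}}\mathbf 1_{\{e'\in\M_H\}}\ra$ at a cost of $Ce^{-cR}$ in $\E\sup_{w_e,w_{e'}}|\cdot|$, using the observation that the supremum over $w_e,w_{e'}$ appearing in \eqref{eq:cor_tv_2edge} survives since the event $A$ in the proof of Proposition~\ref{prop:ee_vv_local} is measurable with respect to the weights \emph{excluding} $w_e$ and $w_{e'}$. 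Then use the factorization above to turn this into $\la\mathbf 1_{\{e\in\M_{\bB^e_R}\}}\ra\,\la\mathbf 1_{\{e'\in\M_{\bB^{e'}_R}\}}\ra$. Finally, apply Lemma~\ref{lem: corr_decay_Lp} twice — once to replace $\la\mathbf 1_{\{e\in\M_{\bB^e_R}\}}\ra$ by $\la\mathbf 1_{\{e\in\M_G\}}\ra$ and once for $e'$ — writing the difference of products as a telescoping sum
\[
AB-A'B'=(A-A')B+A'(B-B'),
\]
and bounding $|B|,|A'|\le 1$; each replacement costs $Ce^{-cR}$ after taking $\E\sup$, since $|A|,|A'|,|B|,|B'|\le 1$ makes the product bound reduce to the individual $L^1$ bounds. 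Collecting the three error terms and recalling $R=\lfloor d/3\rfloor$ gives the right-hand side $Ce^{-c'\,\mathrm{dist}(e,e')}$ after adjusting constants. The vertex-to-vertex bound \eqref{eq: edge_vertex_corr} follows identically from \eqref{eq:cor_tv_2vertex} and the vertex part of Lemma~\ref{lem: corr_decay_Lp}, or alternatively by expanding $\mathbf 1_{\{x\not\in\M_G\}}=\sum_{i}\mathbf 1_{\{e_i\in\M_G\}}$ over the (at most $D$) edges at $x$ and likewise at $x'$, reducing to \eqref{eq: edge_edge_corr} for the $\le D^2$ pairs $(e_i,e_j')$ and using $\mathrm{dist}(e_i,e_j')\ge\mathrm{dist}(x,x')-2$.

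The main obstacle — really the only non-bookkeeping point — is making sure the sup over $w_e,w_{e'}$ (resp.\ $\nu_x,\nu_{x'}$) commutes with all the replacements. This is exactly why the statements in Proposition~\ref{prop:ee_vv_local} and Lemma~\ref{lem: corr_decay_Lp} were phrased with the $\E\sup$ already inside: the localized quantities $\la\mathbf 1_{\{e\in\M_{\bB^e_R}\}}\ra$ depend on $w_e$ but the approximation error bound is uniform in it, and crucially the bad event to be discarded lives in $\calF^{V(\bB^e_R)\setminus\{x,y\}\,\times\,E(\bB^e_R)\setminus\{e\}}$, i.e.\ does not involve $w_e$ or $w_{e'}$, so on its complement the pointwise bound holds for \emph{every} value of those two weights simultaneously. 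One should also double-check the elementary geometric claim that $R=\lfloor d/3\rfloor$ forces $\bB^e_R$ and $\bB^{e'}_R$ to have no edge of one adjacent to an edge of the other (distance between the vertex sets is at least $d-2R\ge d/3\ge 2$ for $d\ge 6$; the small cases $d\in\{4,5\}$ are absorbed into the constant), which is what legitimizes the product-measure factorization of $\mu_H$. Everything else is three applications of the triangle inequality.
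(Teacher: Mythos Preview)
Your proposal is correct and follows essentially the same route as the paper's proof: localize the joint indicator to the disjoint union $\bB^e_R\cup\bB^{e'}_R$ via Proposition~\ref{prop:ee_vv_local}, factorize by disjointness, then un-localize each factor and combine by the triangle inequality. The only cosmetic difference is that for the single-edge step the paper obtains the bound with $\sup_{w_e,w_{e'}}$ directly from Proposition~\ref{prop:ee_vv_local} (by fixing $\delta_1=1$ and summing over $\delta_2$), whereas you appeal to Lemma~\ref{lem: corr_decay_Lp} and then justify the extra $\sup_{w_{e'}}$ by going back to the $\sigma$-algebra of the bad event in Proposition~\ref{thm: corr_decay}; both are valid.
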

Note that \eqref{eq: edge_edge_corr} immediately implies Proposition~\ref{thm: corr_decay}.
 \begin{proof}
 We only prove \eqref{eq: edge_edge_corr} as the proof of \eqref{eq: edge_vertex_corr} is quite similar. 
     Let $\mathrm{dist}(e, e') \ge 2R +2$. Then \eqref{eq:cor_tv_2edge}, after taking $B_R(e, e')$ to be the disjoint union of $\bB^e_R$  and $\bB^{e'}_R$, implies that
     \begin{align}\label{eq:101}
         \E \sup_{w_e, w_{e'}} |\left\la \mathbf{1}_{\{e, e' \in \M_G\}}\right\ra -  \la \mathbf{1}_{\{e, e' \in \M_{B_R(e, e')}\}} \ra  | \le Ce^{-cR}.
     \end{align}
     On the other hand, by Markov property,  we have
\begin{equation}\label{eq:102}
\la \mathbf{1}_{\{e, e ' \in \M_{B_R(e, e')} \}} \ra = \la \mathbf{1}_{\{e\in \M_{\bB^e_R}\}} \ra \la \mathbf{1}_{\{ e'\in \M_{\bB^{e'}_R}\}} \ra.
\end{equation}
 Now taking $\delta_1  =1$ and summing over $\delta_2$ in \eqref{eq:cor_tv_2edge} and using triangle inequality, 
 \[  \E \sup_{w_e, w_{e'}}  | \la \mathbf{1}_{\{ e \in  \M_G\} } \ra -  \la \mathbf{1}_{\{e\in \M_{B_R(e, e')}\}}  \ra|
  = \E \sup_{w_e, w_{e'}}  | \la \mathbf{1}_{\{ e \in  \M_G\} } \ra -  \la \mathbf{1}_{\{e\in \M_{\bB^e_R}\}}  \ra|  \le C e^{-cR}. \]
  The same holds for $e'$. Therefore, we obtain that 
\begin{equation}\label{eq:103}
\E \sup_{w_e, w_{e'}} |  \la \mathbf{1}_{\{e\in \M_{G}\}} \ra \la \mathbf{1}_{\{e'\in \M_{G}\}} \ra -  \la \mathbf{1}_{\{e\in \M_{\bB^e_R}\}} \ra \la \mathbf{1}_{\{ e'\in \M_{\bB^{e'}_R}\}} \ra | \le Ce^{-cR}.
\end{equation}
Now \eqref{eq:101}, \eqref{eq:102} and \eqref{eq:103} together imply \eqref{eq: edge_vertex_corr}. 
 \end{proof}

\section{Proofs of the central limit theorems} \label{sec:CLT}
In this section, we prove Theorem~\ref{thm: CLT_4_moments} and Theorem~\ref{thm: CLT_two_moments}.  Throughout the section,  $c, c_0$ and $C, C_0, C_1, C_2$ will be positive constants that depend only on $D, K_1$, and $K_2$, unless mentioned otherwise. However, the values of $c$ and $C$ may vary from line to line. 
\subsection{Proof of Theorem~\ref{thm: CLT_4_moments}}
The proof of the central limit theorem will be based on a result of Chatterjee \cite{chatterjee2008new}, which we state below. Let $X = (X_1, X_2, \ldots, X_N)$ be a vector of independent random variables and let $g: \mathbb{R}^N \to \R$ be a measurable function. Suppose that $X' = (X_1', X_2', \ldots, X_N')$ is an independent copy of $X$. For any subset $S \subseteq \{1, 2, \ldots, N\}$, define the random vector $X^S$ as
\begin{align*}
X^S_i =  \left \{ \begin{array}{cc}   X_i' & \text{ if } i \in S, \\
 X_i & \text{ if } i  \not \in S.
\end{array} \right.
\end{align*}
For each $i$ and $S \subseteq \{1, 2, \ldots, N\}$ with $ i \not \in S$, set
\begin{equation}
\label{eq: Delta_g}
    \Delta_i g = g(X) - g(X^{\{i\}}), \ \ \   \Delta_i g^S = g(X^S) -g(X^{S \cup \{i\}}).   \end{equation}
 Finally, let $\sigma^2 = \Var(g(X))$.
\begin{thm}[Corollary~3.2 of \cite{chatterjee}]
	\label{lem: chatterjee}
	For $i, j \in \{1, 2, \ldots, N \}$, let $c(i, j)$ be a constant such that for all $S \subseteq \{1, 2, \ldots, N\} \setminus\{i\}$ and $T \subseteq \{1, 2, \ldots, N\} \setminus\{j\}$, one has
	\[
	\Cov(\Delta_i g \Delta_i g^S, \Delta_{j} g \Delta_{j} g^T) \leq c(i, j).
	\]
	Then
	\[
	\sup_{s \in \R} \left|\prob\left(\frac{g(X) - \E g(X) }{\sigma} \leq s\right) - \Phi(s)\right| \leq \frac{\sqrt{2}}{\sigma}\left(\sum_{i, j =1}^N c(i, j)\right)^{1/4} + \frac{1}{\sigma^{3/2}} \left( \sum_{i=1}^N \E |\Delta_i g|^3\right)^{1/2}.
	\]
\end{thm}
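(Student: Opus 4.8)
This is the normal-approximation bound of Chatterjee, so what I would write is a reconstruction of his Stein's-method argument. The plan has three ingredients: a reduction to a Stein-equation estimate, an exact identity rewriting $\E[W f(W)]$, and a variance bound fed by the hypothesis. For the reduction: after replacing $g$ by $g - \E g(X)$ I may take $W := g(X)$ centred with $\Var(W) = \sigma^2$, and by the usual Stein machinery for the Kolmogorov metric it suffices to control $|\E[\sigma^2 f'(W) - W f(W)]|$ for $f$ ranging over a mollified family of bounded solutions of the Stein equation $f'(w) - \sigma^{-2} w f(w) = \1_{\{w \le \sigma s\}} - \Phi(s)$, the mollification scale $\delta$ being optimized only at the very end.

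The heart of the matter is the exact identity. I would introduce the weights $w_{N,m} = m!\,(N-1-m)!/N!$ for $0 \le m \le N-1$, normalized so that $\sum_{S \subseteq [N] \setminus \{i\}} w_{N,|S|} = 1$ for every $i$ (since $\sum_m \binom{N-1}{m} w_{N,m} = 1$), and set
\[
T \;=\; \tfrac12 \sum_{i=1}^N \sum_{S \subseteq [N] \setminus \{i\}} w_{N,|S|}\, \Delta_i g \cdot \Delta_i g^S .
\]
Using the independent copy $X'$, one telescopes $W = g(X)$ along a uniformly random maximal chain $\emptyset = S_0 \subset S_1 \subset \cdots \subset S_N = [N]$ — revealing the coordinates of $X'$ one at a time in a random order — and, averaging over the order and Taylor-expanding to second order, obtains $\E[W f(W)] = \E[T f'(W)] + \mathrm{Rem}$ for $C^2$ functions $f$, with $|\mathrm{Rem}| \le C\,\|f''\|_\infty \sum_{i=1}^N \E|\Delta_i g|^3$. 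Taking $f(w) = w$ gives the normalization $\E T = \E[W^2] = \sigma^2$.

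With the identity in hand, $\E[\sigma^2 f'(W) - W f(W)] = \E[(\sigma^2 - T) f'(W)] - \mathrm{Rem}$; expanding $f'(W)$ around $0$ and using $\E T = \sigma^2$ bounds the first term by $\|f''\|_\infty\,\sigma\,\sqrt{\Var(T)}$, so that the Kolmogorov distance is controlled, up to the mollification error, by $\Var(T)$ and $\sum_i \E|\Delta_i g|^3$. To bound $\Var(T)$, I would write $T = \tfrac12 \sum_i U_i$ with $U_i = \sum_{S \not\ni i} w_{N,|S|}\,\Delta_i g\,\Delta_i g^S$, a convex combination of the variables $\Delta_i g\,\Delta_i g^S$; hence $\Cov(U_i,U_j) = \sum_{S,T} w_{N,|S|} w_{N,|T|}\,\Cov(\Delta_i g\,\Delta_i g^S,\, \Delta_j g\,\Delta_j g^T) \le c(i,j)$, and so $\Var(T) \le \tfrac14 \sum_{i,j} c(i,j)$. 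Since the mollified Stein solutions have $\|f''\|_\infty \asymp 1/\delta$, the passage to the Kolmogorov metric turns the variance bound into its fourth root and the third-moment bound into its square root, and optimizing over $\delta$ yields exactly $\frac{\sqrt2}{\sigma}(\sum_{i,j} c(i,j))^{1/4} + \frac{1}{\sigma^{3/2}}(\sum_i \E|\Delta_i g|^3)^{1/2}$.

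The step I expect to be the genuine obstacle is the exact identity $\E[W f(W)] = \E[T f'(W)] + \mathrm{Rem}$ with precisely these combinatorial weights: this is the device that dispenses with the linear-regression hypothesis $\E[W - W' \mid W] = \lambda W$ of the classical exchangeable-pair method, and getting the random-order telescoping and the remainder control exactly right — together with the quantitative mollification lemma needed for the Kolmogorov (as opposed to a smooth) distance with the stated exponents — is where essentially all the difficulty lies. Everything downstream, namely $\E T = \sigma^2$, the reduction of $\Var(T)$ to $\sum_{i,j} c(i,j)$ via convexity of the weights, and the final optimization in $\delta$, is routine bookkeeping.
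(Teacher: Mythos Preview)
The paper does not supply its own proof of this statement: it is quoted as Corollary~3.2 of Chatterjee's work and invoked as a black box in the proofs of the central limit theorems. Your outline is a faithful reconstruction of Chatterjee's original Stein's-method argument --- the random-order interpolation identity with weights $w_{N,m}$, the resulting $T$ with $\E T = \sigma^2$, and the convexity bound $\Var(T) \le \tfrac14 \sum_{i,j} c(i,j)$ are precisely his ingredients --- but there is nothing in the present paper to compare it against.
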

We  apply the above theorem to the free energy $F = \log{Z}$, viewed as a function of independent random variables $(\nu_x)_{x \in V(G)}$ and $(w_e)_{e \in E(G)}$. Let $I = V(G) \cup E(G)$ be the union of the set of vertices and edges of $G$, which serves as the common index set of all  random weights. Clearly, $|V(G)| \le |I| \le (1 +D/2) |V(G)|$.
As before, let $(\nu'_x)_{x \in V(G)}$ and $(w'_e)_{e \in E(G)}$ denote the independent resamples of the vertex and edge weights. 
 The discrete derivatives $\Delta_i F$ are given as follows. 
\begin{equation}
	\label{eq: diff_free_en1}
\Delta_e F = \int_{w'_e}^{w_e} \left\la \mathbf{1}_{\{e \in \M_G \}} \right\ra_{\mid w_e  = t}~\text{d}t, \quad \text{ if $i = e$ is an edge}, 
\end{equation}
\begin{equation}
	\label{eq: diff_free_en2}
\Delta_x F = \int_{\nu'_x }^{\nu_x} \left\la \mathbf{1}_{\{ x \not \in  \M_G \}} \right\ra_{\mid \nu_x  = t}~\text{d}t, \quad  \text{ if $i = x$ is a vertex.}
\end{equation}
For $S \subseteq I \setminus \{i\}$, we have a similar expression for $\Delta_i F^S$, but  the Gibbs expectation is now taken after the weights with indices belonging to $S$ are resampled. 

Let $F_{[i, R]}$ be the free energy associated with the Gibbs measure $\mu_{ \bB^e_R} $ or $\mu_{ \bB^x_R} $ depending on whether $i =e$, an edge or $i = x$, a vertex. 
For $i \not \in S$, we approximate the discrete derivative $\Delta_i F^S$ by the discrete derivative $\Delta_i F_{ [i, R]}^S$ of the local function  $F_{[i, R]}.$ 
We apply \eqref{eq: diff_free_en1}  separately for $G$ and $\bB_R^e$  and use  Lemma~\ref{lem: corr_decay_Lp} to  obtain that
\begin{align}
 \E \left |   \Delta_e F^S -   \Delta_e F^S_{[e, R]}   \right |^4  &\le \E |w_e  - w'_e|^4   \E \sup_{ t }  \Big |\left\la \mathbf{1}_{\{e\in \M_G\}}\right\ra_{\mid w_e  = t} -  \la \mathbf{1}_{\{e\in \M_{\bB^e_R}\}} \ra_{\mid w_e  = t} \Big |^4 \nonumber\\
 &\le C_0 \E |w_e|^4 e^{-c R}. \label{eq:EdecayR1}
 \end{align}
 Similarly, we have
 \begin{align}
 \E \left |   \Delta_x F^S -   \Delta_x F^S_{[x, R]}   \right |^4   &\le C_0 \E |\nu_x|^4 e^{-c R}.  \label{eq:EdecayR2}
 \end{align}
 Note that if we assume that $\E|w_e|^2, \E|\nu_x|^2 \le K_1$, then the random weights are all $\varphi$-bounded with $\varphi(t)  = K_1 t^{-2}$. Therefore, Lemma~\ref{lem: corr_decay_Lp} ensures that the constants $c$ and $C_0$ in  \eqref{eq:EdecayR1} and \eqref{eq:EdecayR2} can be chosen depending only on $D$ and $K_1$. 
Also, \eqref{eq: diff_free_en1} and  \eqref{eq: diff_free_en2} yield  the following trivial bound
\[  |\Delta_e F^S| \leq |w_e - w_e'| \ \  \text{ and } \ \  |\Delta_x F^S| \le   |\nu_x - \nu_x'|. \]
Consequently, $\E |\Delta_i F^S| ^4 \le 16(  \E |w_e|^4 + \E |\nu_x|^4)$. The same argument also implies that  $\E |\Delta_i F_{[i, R]}^S| ^4 \le 16(  \E |w_e|^4 + \E |\nu_x|^4)$.

To bound $c(i, j)$, let us define the error terms
\[  E_{[i, R]} = \Delta_i F  - \Delta_i F_{[i, R]}, \quad E^S_{[i, R]} = \Delta_i F^S  - \Delta_i F^S_{[i, R]} \ \  \text{ for } i \not \in S.
\]
We can then write, for $i \not \in S$ and $j \not \in T$, 
\begin{align*}
	&\Cov(\Delta_i  F\Delta_i F^S, \Delta_{j} F \Delta_{j} F^{T})\\
	=&\;\Cov \Big(  ( \Delta_i  F_{[i, R]} + E_{[i, R]})(  \Delta_i  F^S_{[i, R]} + E^S_{[i, R]}), ( \Delta_j  F_{[j, R]} + E_{[j, R]})( \Delta_j  F^T_{[j, R]} + E^T_{[j, R]}) \Big).
\end{align*}
By the bilinearity of the covariance, the above expression can be expanded as the sum of $16$ terms. The first term is given by
\[ \Cov(  \Delta_i  F_{[i, R]}  \Delta_i  F^S_{[i, R]},  \Delta_j  F_{[j, R]}  \Delta_j  F^T_{[j, R]} ).\] 
The key observation is that  $F_{[i, R]}$ and $F^S_{[i, R]}$ (and hence $\Delta_i  F_{[i, R]}$ and  $\Delta_i  F^S_{[i, R]}$ as well) only depend on the weights (either original or resampled)  of the vertices and edges that are distance at most $R$ from $i$. 
As a result,  the above covariance vanishes if $\mathrm{dist}(i, j) >2R+1$. Meanwhile, if $\mathrm{dist}(i, j) \le 2R+1$, then  it can be bounded above by $32(  \E |w_e|^4 + \E |\nu_x|^4)$.

Each of the remaining $15$ terms involves at least one error term and they all  can be bounded  by a similar approach. 
For example, let us consider a term of the form $\Cov(E_{[i, R]}W, YZ)$ where $W, Y, Z$ are appropriate discrete derivatives. By H\"older's inequality,  
\begin{align*}
|\Cov(E_{[i, R]}W, YZ)| &\leq \E |E_{[i, R]}W YZ| + \E|E_{[i, R]}W| \E|YZ|\\
&\leq   \big(   \E | E_{[i, R]} |^4  \E W^4 \E Y^4 \E Z^4  \big) ^{1/4}  + \big(   \E |E_{[i, R]} |^2  \E W^2  \big) ^{1/2} \big (\E Y^2 \E Z^2  \big) ^{1/2} \\
&\leq  2 \big(   \E|  E_{[i, R]} |^4  \E W^4 \E Y^4 \E Z^4  \big) ^{1/4}.
\end{align*}
The fourth moments of $W, Y,$  and $Z$ are bounded above by  $16 (  \E |w_e|^4 + \E |\nu_x|^4)$.  On the other hand,  $ \E | E_{[i, R]} |^4 \le C_0 (  \E |w_e|^4 + \E |\nu_x|^4) e^{-cR}$ by  \eqref{eq:EdecayR1} and  \eqref{eq:EdecayR2}. Consequently, 
\[ |\Cov(E_{[i, R]}W, YZ)| \le 32 C_0 (  \E |w_e|^4 + \E |\nu_x|^4) e^{-cR}. \]
%
Summing up the $16$ covariance terms,  we can take, as long as $\mathrm{dist}(i, j)>2R+1$,
\[
c(i, j) \leq C_1(  \E |w_e|^4 + \E |\nu_x|^4) e^{-cR}.
\]
If $\mathrm{dist}(i, j)\leq 2R+1$, we can just take  $c(i, j) = C_1(  \E |w_e|^4 + \E |\nu_x|^4)$.  Let us point out that  the constant $C_1$ only depends on $D$ and $K_1$.

Note that for any  $i$, the number of $j \in I$ such that $\mathrm{dist}(i, j)\leq 2R+1$ is bounded above by $10D\Psi_G(3R)$.
In summary, we obtain 
\begin{align}
	\sum_{i, j \in I } c(i, j ) &\leq  C_1(  \E |w_e|^4 + \E |\nu_x|^4) \cdot \left (  \left  | \Big \{ ( i, j) \in I \times I :  \mathrm{dist}(i, j)\leq 2R+1 \Big \} \right |   +  e^{-cR} \cdot | I|^2 \right) \nonumber \\
	&\le  C_1 (1+ D/2)^2 (  \E |w_e|^4 + \E |\nu_x|^4) \Big(  |V(G)| \Psi_G(3R)   +     |V(G)|^2 e^{-cR} \Big).
	    \label{eq: corr_terms_bound}
	\end{align}
	Next, we estimate 
	\begin{align*}
	\sum_{i \in I} \E | \Delta_i F|^3 &\le  \sum_{i \in I} (\E | \Delta_i F|^4)^{3/4} \\
	 &\le C_1(  \E |w_e|^4 + \E |\nu_x|^4)^{3/4}|V(G)|.
	\end{align*}
	To apply Theorem~\ref{lem: chatterjee}, it remains to find a suitable lower bound on the variance of $F$. In the following lemma, we will show that $\sigma^2: = \Var(F) \ge c_0 |E(G)|$.   Combining these estimates,  we conclude that 
	\begin{multline*} 
	   \frac{\sqrt{2}}{\sigma}\Big (\sum_{i, j \in I } c(i, j)\Big)^{1/4} + \frac{1}{\sigma^{3/2}} \Big( \sum_{i \in I } \E |\Delta_i F |^3\Big)^{1/2} \\ \le 
	  C_2 (  \E |w_e|^4 + \E |\nu_x|^4)^{1/4}  \left ( \frac{  |V(G)|^{1/4}  }{ |E(G)|^{1/2}} \Psi_G(3R)^{1/4}  +  \frac{  |V(G)|^{1/2}  }{ |E(G)|^{1/2}} e^{-cR} \right) + C_2(  \E |w_e|^4 + \E |\nu_x|^4)^{3/8} \frac{  |V(G)|^{1/2}  }{ |E(G)|^{3/4}}, 
	\end{multline*} 
	which completes the proof of Theorem~\ref{thm: CLT_4_moments}. \qed

\begin{lem}
	\label{lem: var_lower_bound}  
	(a) We have 
	\[  \Var(F) \le 2 ( \E |w_e|^2 |E(G)|+  \E |\nu_x|^2 |V(G)|). \]
	(b) Assume that there exist positive constants $K_1$ and $K_2$ such that 
	$\max(\E|w_e|^{2}, \E |\nu_x |^{2})  \le K_1 $ and $\E|w_e - w'_e| \ge K_2$ where $w_e'$ is an i.i.d.\ copy of $w_e$.
	Then there exists a constant $c_0 > 0$ depending only on $D,  K_1,$ and $K_2$ such that 
	\[
	\Var(F) \geq c_0 |E(G)|.
	\]
\end{lem}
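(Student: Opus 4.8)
The plan is to view $F=\log Z_G$ as a function of the independent weights $\big((\nu_x)_{x\in V(G)},(w_e)_{e\in E(G)}\big)$ and to exploit that $F$ is nondecreasing and $1$-Lipschitz in each single weight, since by \eqref{eq: diff_free_en1}--\eqref{eq: diff_free_en2} one has $\partial_{w_e}F=\la\1_{\{e\in\M_G\}}\ra\in[0,1]$ and $\partial_{\nu_x}F=\la\1_{\{x\notin\M_G\}}\ra\in[0,1]$. Part (a) then follows from the Efron--Stein inequality $\Var(F)\le\frac12\sum_i\E[(F-F^{(i)})^2]$, where $F^{(i)}$ denotes $F$ with the $i$-th weight replaced by an independent copy (when the right side is finite, $F$ is square-integrable by a crude bound on $\log Z_G$, so this applies): the Lipschitz bounds give $|F-F^{(e)}|\le|w_e-w_e'|$ and $|F-F^{(x)}|\le|\nu_x-\nu_x'|$, whence $\E[(F-F^{(e)})^2]\le2\Var(w_e)\le2\E|w_e|^2$ and likewise for vertices, and summing gives $\Var(F)\le\E|w_e|^2|E(G)|+\E|\nu_x|^2|V(G)|$, which is stronger than claimed. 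In particular $\Var(F)<\infty$ under the hypotheses of (b), which legitimizes the projection argument below and the normalization in Theorem~\ref{thm: CLT_4_moments}.

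For (b) the plan is to bound $\Var(F)$ below by the squared norm of the orthogonal projection of $F-\E F$ onto $\mathrm{span}\{w_e-\E w_e:e\in E(G)\}$; since these variables are independent, this gives $\Var(F)\ge\sum_{e}\Cov(F,w_e)^2/\Var(w_e)\ge K_1^{-1}\sum_{e}\Cov(F,w_e)^2$, so it suffices to prove $\Cov(F,w_e)\ge c_2>0$ for every edge $e$, with $c_2$ depending only on $D,K_1,K_2$. The key algebraic input is the deletion identity: for $e=(xy)$, splitting matchings of $G$ according to whether they use $e$ gives $Z_G=Z_{G-e}+e^{w_e}Z_{G-\{x,y\}}$, where $G-e$ and $G-\{x,y\}$ denote edge- and vertex-deletion respectively; hence $F=\log Z_{G-e}+\log(1+e^{w_e}\rho_e)$ with $\rho_e:=Z_{G-\{x,y\}}/Z_{G-e}$, and crucially both $\log Z_{G-e}$ and $\rho_e$ are independent of $w_e$. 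Writing $\phi_\rho(t):=\log(1+e^t\rho)$, which is nondecreasing in $t$, and conditioning on all weights except $w_e$, one has $\Cov(F,w_e)=\E\big[\Cov(\phi_{\rho_e}(w_e),w_e\mid\rho_e)\big]\ge0$; the point is to make it strictly positive on a high-probability event.

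The main obstacle is the regime where the ``effective activity'' $\rho_e$ is extremely small: then $e$ is essentially never in the matching and $F$ hardly moves with $w_e$, so this must be ruled out with good probability. The plan is to peel vertices off $G$: the elementary bound $Z_{H-v}\le e^{-\nu_v}Z_H$, applied first at $x$ and then at $y$, yields
\[
\rho_e\ \ge\ \Big[\big(e^{\nu_x}+\sum_{u}e^{\,w_{(x,u)}-\nu_u}\big)\big(e^{\nu_y}+\sum_{v}e^{\,w_{(y,v)}-\nu_v}\big)\Big]^{-1},
\]
the sums running over the neighbours of $x$ other than $y$ (resp.\ of $y$ other than $x$); this is a function of at most $2D+2$ weights within distance $2$ of $e$, so $\max(\E|w_e|^2,\E|\nu_x|^2)\le K_1$ and a union bound give $\prob(\rho_e\ge e^{-L})\ge\tfrac12$ for a constant $L=L(D,K_1)$.

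Finally, on the event $\{\rho_e\ge e^{-L}\}$, the plan is to lower bound $\Cov(\phi_{\rho_e}(w_e),w_e\mid\rho_e)=\tfrac12\E\big[(\phi_{\rho_e}(W)-\phi_{\rho_e}(W'))(W-W')\big]$, where $W,W'$ are i.i.d.\ copies of $w_e$, by restricting the nonnegative integrand to $\{W,W'\in[-T,T],\ |W-W'|\ge K_2/4\}$: on $[-T,T]$ one has $\phi_{\rho_e}'(t)=e^t\rho_e/(1+e^t\rho_e)\ge e^{-T-L}/(1+e^{-T-L})$, hence $|\phi_{\rho_e}(W)-\phi_{\rho_e}(W')|\ge\tfrac{K_2}{4}\cdot e^{-T-L}/(1+e^{-T-L})$ there, while a Paley--Zygmund bound applied to $|w_e-w_e'|$ (using $\E|w_e-w_e'|\ge K_2$ and $\Var(w_e)\le K_1$) together with Markov's inequality $\prob(|w_e|>T)\le K_1/T^2$ shows that $\prob\big(W,W'\in[-T,T],\ |W-W'|\ge K_2/4\big)$ is bounded below, all with constants depending only on $K_1,K_2$ once $T=T(K_1,K_2)$ is fixed suitably. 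This gives $\Cov(\phi_{\rho_e}(w_e),w_e\mid\rho_e)\ge c_1(D,K_1,K_2)>0$ on $\{\rho_e\ge e^{-L}\}$, hence $\Cov(F,w_e)\ge c_1\,\prob(\rho_e\ge e^{-L})\ge c_1/2=:c_2$; substituting into the projection bound completes the proof with $c_0=c_1^2/(4K_1)$.
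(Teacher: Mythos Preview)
Your argument is correct. Part (a) is identical to the paper's (Efron--Stein). For part (b), both you and the paper use the same combinatorial input---the deletion identity $Z_G=Z_{G-e}+e^{w_e}Z_{G-\{x,y\}}$ and the neighborhood bound on $\alpha_e/\beta_e=1/\rho_e$ (the paper writes $\alpha_{e_j}\le e^{\nu_{x_j}+\nu_{y_j}}\beta_{e_j}\prod_{f\in E_j}(1+e^{w_f-\nu_u-\nu_v})$, which is equivalent to your peeling bound). The difference lies in how the variance is decomposed: the paper uses the edge-revealing martingale to obtain $\Var(F)\ge\sum_e\Var(\E[F\mid w_e])=\tfrac12\sum_e\E_{w_e,w_e'}[(\E'(F-F^{(e)}))^2]$, whereas you project linearly onto $\mathrm{span}\{w_e-\E w_e\}$ to get $\Var(F)\ge\sum_e\Cov(F,w_e)^2/\Var(w_e)$. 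Since $\Cov(F,w_e)^2/\Var(w_e)\le\Var(\E[F\mid w_e])$ by Cauchy--Schwarz, your intermediate inequality is slightly weaker, but both reduce to lower-bounding the same conditional quantity on the event $\{\rho_e\ge e^{-L}\}$, and the final restriction to $\{W,W'\in[-T,T],\ |W-W'|\ge K_2/4\}$ via Paley--Zygmund and Markov is essentially the same as the paper's truncation argument. One small correction: the bound on $\rho_e$ involves up to $4D-2$ weights (roughly $2D$ vertex weights and $2(D-1)$ edge weights), not $2D+2$; this does not affect the argument since all that matters is that the count is $O(D)$.
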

\begin{proof}
	The upper bound is an easy consequence of Efron-Stein inequality, see \cite[Lemma~3.3]{DeyKrishnan}. 
	 For the lower bound, we will follow the ideas of \cite[Lemma~3.6]{DeyKrishnan}. However, we need to be careful in keeping track of  the dependence of the constant $c_0$ on the distribution of the weights. 
	 Let $m = |E(G)|$. We enumerate the edges in $E(G)$ as $\{1, 2, \ldots, m\}$, and their respective weights are denoted as $\{w_1,\ldots, w_m\}$. Consider the edge-revealing martingale $(\E[ F \mid \calF_j])_{0 \le j \le m}$, where 
	$\mathcal{F}_j = \sigma(w_1,\ldots, w_j)$. The variance of $F$ can then be written as
	\[ \Var(F) =  \sum_{j=1}^m  \E \Big ( \E[F \mid \mathcal{F}_j] - \E[ F \mid \mathcal{F}_{j-1}] \Big )^2. \]
	Let $F^{(j)}$ be obtained from $F$ after replacing $w_j$ with an i.i.d.~copy $w_j'$.  Then the above martingale difference is  equal to 
	\[  \E[F \mid \mathcal{F}_j] - \E[ F \mid \mathcal{F}_{j-1}]  =  \E[F  - F^{(j)}\mid \mathcal{F}_{j}].  \]
	Therefore, we write 
	\begin{align}
	\Var(F) &= \sum_{j=1}^m \E\left[\E[ F - F^{(j)} \mid \mathcal{F}_j]^2\right]  \nonumber\\
	&\geq \sum_{j=1}^m \E\left[\E[F - F^{(j)} \mid w_j]^2\right]  \nonumber\\
&=  \frac{1}{2} \sum_{j=1}^m \E_{w_j, w_j'} \Big [   \E' (F  - F^{(j)})^2 \Big],  \label{eq: var_lower_bound2}
\end{align}
where $\E'$ is the conditional expectation given $w_j$ and $w_j'$. 
For $e = (xy)$, let  $\alpha_e$ and $\beta_e$ denote the partition functions associated with the subgraphs of $G$ obtained by removing the edge $e$ and the vertices $x, y$ respectively. Clearly, $\alpha_e$ and $\beta_e$ does not depend on $w_e$. We have
$Z_G = \alpha_{e_j} + \beta_{e_j}\exp(w_j)$, and
	\[
	F^{(j)} - F = \int_{w_j}^{w_j'} \frac{\beta_{e_j} e^t}{\alpha_{e_j} + \beta_{e_j} e^t}~\text{d}t.
	\]
	By monotonicity of $t \mapsto \beta_{e_j} e^t (\alpha_{e_j} + \beta_{e_j} e^t )^{-1}$, we obtain
	\[
	| \E' ( F^{(j)} - F)  | \ge |w_j'  -  w_j| \cdot \E' \frac{\beta_{e_j} e^{\min(w_j, w_j')} }{\alpha_{e_j} + \beta_{e_j} e^{\min(w_j, w_j')} }
	\]
	
	Let $e_j = (x_jy_j)$ and  $E_j$ be the set of all edges adjacent to $e_j$. Observe that 
	\[
	\alpha_{e_j} \leq e^{\nu_{x_j} + \nu_{y_j}} \cdot \beta_{e_j} \prod_{e = (uv)\in E_j} (1 + e^{w_e - \nu_u - \nu_v}).
	\]
	From the above inequality, we deduce that
	\begin{align*}
	| \E' ( F^{(j)} - F)  |  &\ge  |w_j'  -  w_j| \E' \left(1 + e^{\nu_{x_j} + \nu_{y_j} - \min(w_j, w_j')} \prod_{e = (uv)\in E_j} (1+e^{w_e - \nu_u - \nu_v})\right)^{-1} \\
	&\ge |w_j'  -  w_j|  \mathbf{1}_{ \{ \min (w_j, w_j')  \ge  - a \} } \cdot  \E \left(1 + e^{\nu_{x_j} + \nu_{y_j} + a} \prod_{e = (uv)\in E_j} (1+e^{w_e - \nu_u - \nu_v})\right)^{-1},
	\end{align*}
	for any  $a > 0$. Note that
	\begin{equation}\label{eq:CS_lower}
	\E  |w_j'  -  w_j|^2   \mathbf{1}_{\{  \min (w_j, w_j')  \ge  - a \} }  \ge \Big (  \E  |w_j'  -  w_j|   \mathbf{1}_{\{  \min (w_j, w_j')  \ge  - a \} } \Big)^2 \\
\end{equation}
and 
	\begin{align*}
	\E  |w_j'  -  w_j|   \mathbf{1}_{\{  \min (w_j, w_j')  \ge  - a \} }  & \ge \E  |w_j'  -  w_j|    -  \E  |w_j'  -  w_j|   \mathbf{1}_{\{  \max (|w_j|, |w_j'|) >    a \} } \\
	&\ge  K_2  -  \big (  \E  |w_j'  -  w_j|^{2} \big)  \prob (  \max (|w_j|, |w_j'|) >    a) ^{1/2}\\
	&\ge K_2  -  4 K_1 ( 2 K_1 / a^{2} ) ^{1/2} \ge K_2/2,
	\end{align*}
	if $a = a_0$ is chosen sufficiently large depending on $K_1$ and $K_2$.  On the other hand, the random variable $\Big(1 + e^{\nu_{x_j} + \nu_{y_j} + a_0} \prod_{e = (uv)\in E_j} (1+e^{w_e - \nu_u - \nu_v})\Big)^{-1}$ is independent of $w_j$ and $w_j'$ and its expectation can be bounded below by a positive constant $c_1$ that depends only on $D, a_0,$ and $K_1$, but not on $j$ or $m$. So finally, by  \eqref{eq:CS_lower}, 
	\begin{align*}
	\E_{w_j, w_j'} \Big [  \big|  \E' (F  - F^{(j)})  \big|^2 \Big]     \ge c_1^2    \E  |w_j'  -  w_j|^2   \mathbf{1}_{\{  \min (w_j, w_j')  \ge  - a_0 \} } \ge c_1^2 (K_2/2)^2.
	\end{align*}
	The desired lower bound on $\Var(F)$ now follows from   \eqref{eq: var_lower_bound2}.
\end{proof}

\subsection{Proof of Theorem~\ref{thm: CLT_two_moments}}
Without loss of generality, assume that $|V(G_n)| = n$. By hypothesis, we have  $|E(G_n)| \ge \delta n$. Let $L  = n^{\kappa}$ for some small positive constant $\kappa > 0$ ($\kappa = 1/10$ suffices). For $t\in [0,1]$, define the weights 
	\begin{align*}
	w_e^t &= w_e\mathbf{1}_{\{|w_e|\leq L\}} + tw_e\mathbf{1}_{\{|w_e| > L\}},\\
	\nu_x^t &= \nu_x \mathbf{1}_{\{|\nu_x|\leq L\}} + t\nu_x \mathbf{1}_{\{|\nu_x |> L\}},
	\end{align*}
	which interpolate between the weights truncated at $L$ at $t=0$ and the original weights at $t=1$. 
	Let $F_{n,t}$ be the free energy of the disordered monomer-dimer model on $G_n$ with  the weights $(w_e^t)_{e \in E(G_n) } , (\nu_x^t)_{x \in V(G_n)} $, and write $\la\cdot\ra_{n, t}$ for the corresponding  Gibbs expectation. For a random variable $W$, we write $\overline{W} = W - \E W$ for its centered version. 
	
	It follows from Theorem~\ref{thm: CLT_4_moments} that 
	\begin{equation}\label{eq:base_CLT}
	 \frac{\overline{F_{n, 0}}}{\sqrt{\Var(F_{n,0})} } \stackrel{d}{\to} N(0, 1).
	 \end{equation}
	Indeed, by assumption,
	$\E |w^0_e|^{2}$ and  $\E |\nu^0_x|^{2}$  are bounded above by  $K_1 := \max(\E |w_e|^{2}, \E |\nu_x|^{2} ) ) <\infty$.  For large $n$,  
	\[ \E| w^0_e  - (w_e^0)' | \ge K_2 := \frac{1}{2} \E |w_e  - w_e'| > 0.\]
	 Moreover, we have the trivial bound $\E |w^0_e|^4, \E |\nu^0_x|^4 \le n^{4 \kappa}$. Therefore, Theorem~\ref{thm: CLT_4_moments} yields that 
\begin{equation} \label{eq:2nd_moment_CLT_base}
 \sup_{s \in \R} \left|\prob\left(\frac{\overline{F_{n, 0} }}{\sqrt{\Var(F_{n, 0} )}} \leq s \right) - \Phi(s)\right| \leq C n^{3\kappa/2}   \left ( n^{-1/4} \Psi_{G_n}(3R)^{1/4}  +  e^{-cR} + n^{-1/4} \right)
\end{equation}
	for any $R \ge 1$, where $c$ and $C$ depend on $\delta, K_1, K_2,$ and $D$ only. For $R = 2c^{-1} \log n$, the uniformly sub-exponential volume growth   implies that there exists $K$ such that \[ \Psi_{G_n}( 3R) \le K \exp( (1/10) \log n).  \]
After plugging in, the RHS of \eqref{eq:2nd_moment_CLT_base} becomes
	\[ C n^{3\kappa/2}   \left ( n^{-1/4} \exp( (1/40) \log n)  +  e^{- 2\log n} + n^{-1/4} \right)
\to 0 \quad \text{as } n \to \infty, \]
and \eqref{eq:base_CLT} follows. 
From Lemma~\ref{lem: var_lower_bound},  the variance of the free energy is of order $n$, namely, 
\begin{equation}\label{eq:var_bd}
cn \le \Var(F_{n,0}) \le Cn, \quad cn \le \Var(F_{n,1}) \le Cn.
\end{equation}
Our main claim is as follows.
\begin{equation}\label{eq:F_0F_1_close}
n^{-1} \E |\overline{F_{n,0}} - \overline{F_{n,1}}|^2 \to 0.
\end{equation}
Once the claim is established, it follows from the variance upper bounds in \eqref{eq:var_bd} and from the Cauchy-Schwarz inequality that 
\begin{align*}
n^{-1} | \Var(F_{n, 0}) - \Var(F_{n, 1}) | &  \le  n^{-1} \big(\Var(F_{n,0}) + \Var(F_{n,1}) \big)^{1/2}     \big(\E |\overline{F_{n,0}} - \overline{F_{n,1}}|^2 \big)^{1/2}      \\
&\le C  \big ( n^{-1} \E |\overline{F_{n,0}} - \overline{F_{n,1}}|^2 \big )^{1/2}  \to 0.
\end{align*}
Coupled with the lower bound on the variance provided in \eqref{eq:var_bd}, the above  implies that 
\begin{equation}\label{eq:var_equivalence}
 \frac{ \Var(F_{n, 0})}{  \Var(F_{n, 1})} \to 1.
 \end{equation}
Finally, we write
\[  \frac{\overline{F_{n, 1}}}{\sqrt{\Var(F_{n, 1})} } = \frac{\overline{F_{n, 0}}}{\sqrt{\Var(F_{n,0})} } \cdot \sqrt{\frac{ \Var(F_{n, 0})}{  \Var(F_{n, 1})}} + 
\frac{\overline{F_{n, 1}} - \overline{F_{n, 0}}}{\sqrt{n}} \cdot \sqrt{\frac{n}{\Var(F_{n, 1})}} \]
and conclude that 	
\[  \frac{\overline{F_{n, 1}}}{\sqrt{\Var(F_{n, 1})} } \stackrel{d}{\to} N(0, 1)\]
from \eqref{eq:base_CLT}, \eqref{eq:var_equivalence}, \eqref{eq:F_0F_1_close},  and \eqref{eq:var_bd}. 

It remains to show the claim \eqref{eq:F_0F_1_close}. Note that 
	\begin{align*}
	\overline{F_{n,1}} - \overline{F_{n,0}} &= \int_0^1 \frac{\text{d}}{\text{d}t} \overline{F_{n, t}}~\text{d}t \\
	&= \int_0^1  \sum_{e\in E(G_n)}   \overline{\la  \mathbf{1}_{\{e \in \M_{G_n} \}}\ra_{n, t} w_e \mathbf{1}_{\{|w_e|>L\}}}~\text{d}t + \int_0^1  \sum_{x\in V(G_n)} \overline{ \la \mathbf{1}_{\{x \not \in \M_{G_n} \}}\ra_{n, t} \nu_x \mathbf{1}_{\{|\nu_x|>L\}}}~\text{d}t.
	\end{align*}
	For notational simplicity, write
\begin{align*}
X^e_{n, t} &=  \sum_{e\in E(G_n)}   \overline{\la  \mathbf{1}_{\{e \in \M_{G_n} \}}\ra_{n, t} w_e \mathbf{1}_{\{|w_e|>L\}}}, \quad   X^x_{n, t} =  \sum_{x\in V(G_n)} \overline{ \la \mathbf{1}_{\{x \not \in \M_{G_n} \}}\ra_{n, t} \nu_x \mathbf{1}_{\{|\nu_x|>L\}}}.
\end{align*}
By Jensen,
\begin{align*} 	
 \E |\overline{F_{n,0}} - \overline{F_{n,1}}|^2 \le 2\int_0^1 \E | X^e_{n, t} |^2~\text{d}t + 2\int_0^1 \E | X^x_{n, t} |^2~\text{d}t. 
\end{align*} 	
To tackle the first integral above, let $\bB^e_{n, R}$  be the ball of radius $R$ around $e$ in $G_n$.  Define
	\[
	\gamma_{n, t}(e, R) = \la \mathbf{1}_{\{e\in \M_{ \bB^e_{n, R} } \} }\ra_{n, t}, \quad \text{ and } \ \ \  Y^e_{n, t} =  \sum_{e\in E(G_n)}   \overline{\gamma_{n, t}(e, R)  w_e \mathbf{1}_{\{|w_e|>L\}}}.	\]
 By Lemma~\ref{lem: corr_decay_Lp},  there exist constants $C, c>0$ (depending only on $D$ and $K_1$) such that for all $t\in [0,1]$,
\[	 \E \sup_{w_e^t} \big | \la \mathbf{1}_{\{e\in \M_{G_n}\}} \ra_{n, t} - \gamma_{n, t}(e, R) \big|^2 \leq Ce^{-cR} = Cn^{-2},\]
which implies that
\begin{align*}	\E \Big |  \overline{\la  \mathbf{1}_{\{e \in \M_{G_n} \}}\ra_{n, t} w_e \mathbf{1}_{\{|w_e|>L\}}} &- \overline{\gamma_{n, t}(e, R)  w_e \mathbf{1}_{\{|w_e|>L\}}} \Big|^2  \le  \E \Big | \big (  \la \mathbf{1}_{\{e\in \M_{G_n}\}} \ra_{n, t} - \gamma_{n, t}(e, R) \big)  w_e \Big|^2 \\
&\leq   \E \sup_{w_e^t} \big |  \la \mathbf{1}_{\{e\in \M_{G_n}\}} \ra_{n, t} - \gamma_{n, t}(e, R) \big|^2 \cdot  \E |w_e|^2\\
 &\le C  n^{-2}.
\end{align*}
Therefore, by Cauchy-Schwarz, 
	\begin{equation}\label{eq:1}
\sup_{ t \in [0, 1]} \E \big | X^e_{n, t}-  Y^e_{n, t} \big|^2 \leq  C.
\end{equation}
	
Note that
\[  \E |Y^e_{n, t}|^2 = \sum_{e, e' \in E(G_n) } \Cov \big( \gamma_{n, t}(e, R)  w_e \mathbf{1}_{\{|w_e|>L\}}, \gamma_{n, t}(e', R)  w_{e'} \mathbf{1}_{\{|w_{e'}|>L\}} \big). \]
If the distance between $e$ and $e'$ is greater than $2R+1$, $\gamma_{n, t}(e, R)$ and $\gamma_{n, t}(e', R)$ are independent and 
the covariance vanishes. Else, by Cauchy-Schwarz and H\"older's  inequalities, the absolute value of the covariance is bounded above by
\[ \E [w_e^2 \mathbf{1}_{\{|w_e|>L\}}]  \le  \frac{\E |w_e|^{2+\eps}}{L^\eps} \le C n^{-\kappa \eps}.  \]
Due to uniform subexponential volume growth assumption on $G_n$,  for any $\alpha>0$, the number of pairs $(e, e')$ with distances at most $2R+1$ is bounded above by $Cn e^{ \alpha \log n }$ for $C$ sufficiently large. Consequently, after choosing $\alpha = \kappa \eps/2$, we have 
\[ \sup_{ t \in [0, 1]} n^{-1} \E |Y^e_{n, t}|^2 \le n^{-1} \cdot  Cn e^{ \alpha \log n } \cdot C n^{-\kappa \eps} \to 0.\]
This, together with \eqref{eq:1}, implies that 
\begin{equation} \label{eq:2}
 \sup_{ t \in [0, 1]} n^{-1} \E |X^e_{n, t}|^2 \to 0.
 \end{equation}
An exactly similar argument shows that 
 \begin{equation} \label{eq:3}
 \sup_{ t \in [0, 1]} n^{-1} \E |X^x_{n, t}|^2 \to 0.
 \end{equation}
The claim \eqref{eq:F_0F_1_close} now follows from \eqref{eq:2} and \eqref{eq:3}  and  the proof of the theorem is now complete. \qed

\subsection{Proof of Theorem~\ref{thm: no_of_dimers}}
In this section, we prove Theorem~\ref{thm: no_of_dimers}. Again we will use Theorem~\ref{lem: chatterjee}. This time we will apply it to $\la |\M_G|\ra$, viewed as a function of the edge and vertex weights. When we compute the derivatives of $\la |\M_G|\ra$ with respect to the weights, the Gibbs covariances of the indicators  $\mathbf{1}_{\{e\in\M_G\}}$ and $\mathbf{1}_{\{x\not\in\M_G\}}$  with $\la |\M_G|\ra$ appear naturally, and we use Corollary~\ref{prop: corr_decay_stronger} and Proposition~\ref{prop:ee_vv_local}  to bound the covariances of these derivatives.


Set $\Lambda = \la |\M_G|\ra$. It is straightforward to check that
\begin{align}
    \partial_{w_e} \Lambda &= \la |\M_G| \mathbf{1}_{\{e \in \M_G\}}\ra - \la |\M_G| \ra \la \mathbf{1}_{\{e \in \M_G\}}\ra, \label{partial_e}\\
        \partial_{\nu_x} \Lambda &= \la |\M_G| \mathbf{1}_{\{x \not \in \M_G\}}\ra - \la |\M_G| \ra \la \mathbf{1}_{\{x \not \in \M_G\}}\ra
        =   -\frac{1}{2} \left ( \la |\mathsf{U}_G| \mathbf{1}_{\{x \not \in \M_G\}}\ra - \la   |\mathsf{U}_G| \ra \la \mathbf{1}_{\{x \not \in \M_G\}}\ra \right), \label{partial_x}
\end{align}
where $\mathsf{U}_G$  is the set of all  vertices of $G$ left unmatched by $ \M_G$. The last equality from above follows from the simple identity that $|\mathsf{U}_G| + 2|\M_G| = |V(G)|$. Using the linearity of the Gibbs covariance, we can then  express the discrete derivatives of $\Lambda$ as
\begin{align*}
\Delta_e \Lambda &= \sum_{f \in E(G)} \int_{w_e'}^{w_e} (\la  \mathbf{1}_{\{e, f \in \M_G\}}\ra_{e, t} - \la \mathbf{1}_{\{e \in \M_G\}}\ra_{e, t}\la \mathbf{1}_{\{f \in \M_G\}}\ra_{e, t})~\mathrm{d}t,\\
\Delta_x \Lambda &= -\frac{1}{2} \sum_{y \in V(G)} \int_{\nu_x'}^{\nu_x} (\la  \mathbf{1}_{\{x, y \not \in \M_G\}}\ra_{x, t} - \la  \mathbf{1}_{\{x \not \in \M_G\}}\ra_{x, t} \la  \mathbf{1}_{\{y \not \in \M_G\}}\ra_{x, t})~\mathrm{d}t,
\end{align*}
where, for notational simplicity, we write $\la\cdot\ra_{e, t} = \la \cdot \ra_{|w_e = t}$ and  $\la\cdot\ra_{x, t} = \la \cdot \ra_{|\nu_x = t}$.

We first consider the $\Delta_e \Lambda$ term. 
Denote the correlation between edges $e$ and $f$,
\[ X_{f, e} = \sup_t |\la\mathbf{1}_{\{e, f\in \M_G\}}\ra_{e, t} - \la \mathbf{1}_{\{e\in \M_G\}}\ra_{e,t} \la \mathbf{1}_{\{f \in \M_G\}}\ra_{e, t}|. \]
By Jensen's inequality, for $p \ge 1$, 
\begin{align*}
   \| \Delta_e \Lambda \|_p \leq \sum_{f \in E(G)} \| w_e - w_{e'}\|_p \|X_{f, e}\|_p \le 2^p \| w_e \|_p \sum_{f \in E(G)}  \|X_{f, e}\|_p.
\end{align*}
By Corollary~\ref{prop: corr_decay_stronger}, one has, for $p \ge 1$,
\begin{equation*}\label{eq:p_norm_bound}
\| X_{f, e}\|_p \le C_* e^{-c_* \cdot \mathrm{dist}(e, f)}.
\end{equation*}
Then for $G \in \mathsf{Growth}(A, c_*/5)$,
\begin{align*}
    \sum_{f \in E(G) }  e^{-c_* \cdot \mathrm{dist}(e, f) } &\le 
    \sum_{k=0}^\infty \sum_{f:\mathrm{dist}(e, f) = k}  e^{-c_* k} \\
      &\le  \sum_{k=0}^\infty D\Psi_G(k)   e^{-c_* k} \\
    &\le  D A \sum_{k=0}^\infty   e^{(c_*/5) k}   e^{-c_* k} \\
    &\le C'.
\end{align*}
Therefore, we obtain that 
\begin{equation}\label{eq:Delta_e_bd}
 \| \Delta_e \Lambda \|_p \le C \| w_e\|_p. 
\end{equation}
Given  $e \in E, R \ge 1$, we define a local version of the edge discrete derivative $\Delta_e \Lambda$ as follows:
\[
\Delta_e \Lambda_{[e, R]} := \sum_{f\in E(\bB_{R}^e)} \int_{w_e'}^{w_e} \la\mathbf{1}_{\{f \in \M_{\bB_{2R}^e}\}} \mathbf{1}_{\{e \in \M_{\bB_{2R}^e}\}}\ra_{e, t} - \la\mathbf{1}_{\{f \in \M_{\bB_{2R}^e}\}}\ra_{e, t}\la\mathbf{1}_{\{e \in \M_{\bB_{2R}^e}\}}\ra_{e, t}~\mathrm{d}t.
\]

To control $\| \Delta_e \Lambda  - \Delta_e \Lambda_{[e, R]} \|_p$, we decompose 
\[
\sum_{f \in E(G)} \la\mathbf{1}_{\{e, f\in \M_G\}}\ra_{e, t} - \la \mathbf{1}_{\{e\in \M_G\}}\ra_{e,t} \la \mathbf{1}_{\{f \in \M_G|\}}\ra_{e, t} = Y_{R, e, t} + L_{R, e, t} + E_{R, e, t},
\]
where
\begin{align*}
    Y_{R, e, t} &= \sum_{f \in E(G) \setminus E(\bB_{R}^e)}  \la\mathbf{1}_{\{e, f\in \M_G\}}\ra_{e, t} - \la \mathbf{1}_{\{e\in \M_G\}}\ra_{e,t} \la \mathbf{1}_{\{f \in \M_G\}}\ra_{e, t},\\
    L_{R, e, t} &= \sum_{f\in E(\bB_{R}^e)} \la \mathbf{1}_{\{e, f \in \M_{\bB_{2R}^e}\}}\ra_{e, t} - \la\mathbf{1}_{\{e \in \M_{\bB_{2R}^e}\}}\ra_{e, t}\la\mathbf{1}_{\{f \in \M_{\bB_{2R}^e}\}}\ra_{e, t},\\
    E_{R, e, t} &= \sum_{f \in E(\bB_{R}^e)} \big( \la\mathbf{1}_{\{e, f \in \M_G\}}\ra_{e, t} -  \la\mathbf{1}_{\{e, f \in \M_{\bB^e_{2R}}\}}\ra_{e, t} \big) \\
    &- \sum_{f \in E(\bB_{R}^e)} \big( \la \mathbf{1}_{\{e\in \M_G\}}\ra_{e,t} \la \mathbf{1}_{\{f \in \M_G\}}\ra_{e, t} -  \la \mathbf{1}_{\{e\in \M_{\bB_{2R}^e} \}}\ra_{e,t} \la \mathbf{1}_{\{f \in \M_{\bB_{2R}^e} \}}\ra_{e, t} \big).
\end{align*}

We bound the three terms separately. 
 By Corollary~\ref{prop: corr_decay_stronger} and  under the volume growth assumption
 $G \in \mathsf{Growth}(A, c_*/5)$, 
\begin{align}
   \| \sup_t |Y_{R, e, t} | \|_p &\le \sum_{f \in E(G) \setminus E(\bB_{R}^e)} \| X_{f, e}\|_p  \nonumber \\
    &\le \sum_{ k > R} \sum_{f: \mathrm{dist}(e, f) = k} C_* e^{-c_* k} \nonumber \\
    &\le C' e^{-(4c_*/5)R}. \label{eq: X_bound}
    \end{align}
Next, for any $f, e$ with $\mathrm{dist}(e, f) \le R$, an application of Proposition~\ref{prop:ee_vv_local} with $B_R(e, f) = \bB_{2R}^e$ yields
\begin{align*}
\| \sup_t | \la\mathbf{1}_{\{e, f \in \M_G\}}\ra_{e, t} -  \la\mathbf{1}_{\{e, f \in \M_{\bB^e_{2R}}\}}\ra_{e, t}| \|_p &\le C_* e^{-c_* R} \\
\| \sup_t | \la\mathbf{1}_{\{f \in \M_G\}}\ra_{e, t} -  \la\mathbf{1}_{\{f \in \M_{\bB^e_{2R}}\}}\ra_{e, t}| \|_p &\le C_* e^{-c_* R}
\end{align*}
Hence,  it follows from the assumption $G \in \mathsf{Growth}(A, c_*/5)$ that,
\begin{equation}
\label{eq: E_bound}
    \| \sup_t |E_{R, e, t}|\|_p \leq  100C_* e^{-c_* R} |E(\bB_R^e)| \leq C'e^{-(4c_*/5)R}.
\end{equation}
Finally, we have the trivial bound
\begin{equation}
\label{eq: L_bound}
\| \sup_t |L_{R, e, t }|\|_p \leq |E(\bB_{R}^e)| \leq C'\Psi_G(R).
\end{equation}
Therefore, we deduce from \eqref{eq: L_bound} that
\begin{equation} \label{eq:local_L_p_bd}
    \| \Delta_e \Lambda_{[e, R]}\|_p \le  \|  w_e -  w_{e'} \|_p \| \sup_t |L_{e, R, t}| \|_p \le C \| w_e\|_p \Psi_G(R).
\end{equation}
 On the other hand, by \eqref{eq: X_bound} and \eqref{eq: E_bound}, we have
\begin{align}
\label{eq: compare_discrete_derivatives}
\| \Delta_e \Lambda - \Delta_e \Lambda_{[e, R]} \|_p \leq \|w_e - w_e'\|_p  (\| \sup_t |Y_{R, e, t}\|_p  + \| \sup_t E_{R, e, t}| \|_p ) \leq C \| w_e\|_p  e^{-(4c_*/5)R}.
\end{align}
By H\"older's inequality and using the bound
$\| X -\E X\|_4 \le 2 \| X\|_4$, we obtain that for any random variables $X, X', Y, W, Z$,
\[ |\mathrm{Cov}(WX, YZ) - \mathrm{Cov}(WX', YZ)  | \le 16 \| W\|_4  \| Y\|_4 \| Z\|_4 \| X - X'\|_4.\]
Recall the notations in \eqref{eq: Delta_g}. Take $S \subseteq I \setminus \{e\}$ and $T  \setminus \{e'\}$ where  $I = V(G) \cup E(G)$.
It follows from \eqref{eq:Delta_e_bd}, \eqref{eq:local_L_p_bd}, and \eqref{eq: compare_discrete_derivatives} that 
\[ \begin{gathered}
    \Big | \mathrm{Cov}( \Delta_e \Lambda, \Delta_e \Lambda^S, \Delta_{e'} \Lambda, \Delta_{e'} \Lambda^T  ) -  
 \mathrm{Cov}( \Delta_e \Lambda_{[e, R]} \Delta_e \Lambda^S_{[e, R]}, \Delta_{e'} \Lambda_{[e', R]} \Delta_{e'} \Lambda^T_{[e', R]}  ) \Big| \\ 
\le C'\|w_e\|_4^4 \Psi_G(R)^3 e^{-(4c_*/5)R}\\
\le C \|w_e\|_4^4 e^{-(c_*/5)R},
\end{gathered}
\]
where in the last inequality we used the assumption $G \in \mathsf{Growth}(A, c_*/5)$.

It follows from \eqref{eq:local_L_p_bd} and the fact that $\Delta_e \Lambda_{[e, R]} \Delta_e \Lambda^S_{[e, R]}$ only depends on the edge and vertex weights of the subgraph $\bB^e_{2R}$,
\[ |\mathrm{Cov}( \Delta_e \Lambda_{[e, R]} \Delta_e \Lambda^S_{[e, R]}, \Delta_{e'} \Lambda_{[e', R]} \Delta_{e'} \Lambda^T_{[e', R]}  )| \ \   \begin{cases}
     = 0 & \text{ if } \mathrm{dist}(e, e') \ge 4R+ 2, \\
     \le C \| w_e\|_4^4 \Psi_G(R)^4 & \text{ otherwise}.
\end{cases} 
\]
Therefore,
\begin{align*}
\sum_{e, e' \in E(G)} c(e, e') &\le C'  \| w_e\|_4^4 \Big ( |E(G)|^2  e^{-(c_*/5)R} + |E(G)| \Psi_G(4R+2) \Psi_G(R)^4 \Big) \\
&\le C \Big ( |E(G)|^2  e^{-(c_*/5) R} + |E(G)| \Psi_G(R)^8  \Big),
\end{align*}  
where we absorbed the moments of the weights into the multiplicative constant and used the fact  $\Psi_G(4R+2) \le C'' \Psi_G(R)^4 $ which is a consequence of the sub-multiplicativity of volume growth and bounded degree assumption.
It is not hard to convince ourselves that the same bound also holds for $\sum_{x, x' \in V(G)} c(x, x')$ and $\sum_{e \in E(G), x \in V(G)} c(e, x)$. Adding them, we arrive at the bound
\begin{equation} \label{eq:c_ij_bd}
    \sum_{i, j \in I} c(i, j) \le C  |E(G)|^2 e^{-cR} + C |E(G)| \Psi_G(R)^8.
\end{equation}
Now  \eqref{eq:Delta_e_bd} implies that
\[ \sum_{ e \in E(G)} \E | \Delta_e \Lambda |^3 \le C' \| w_e\|_3 ^3 |E(G)|
\le C  |E(G)|.  \]
Again the same bound holds for $\sum_{ x \in V(G)}\E |  \Delta_x \Lambda|^3.$ We combine them to  obtain
\begin{equation} \label{eq:Delta_i_bd}
\sum_{ i \in I} \E |  \Delta_i \Lambda |^3 \le C  |E(G)|.  
\end{equation}
The proof of Theorem~\ref{thm: no_of_dimers} is now complete after plugging in the estimates \eqref{eq:c_ij_bd} and \eqref{eq:Delta_i_bd} in Theorem~\ref{thm: no_of_dimers} along with variance lower bound $\Var(\Lambda) \geq c_0 |E(G)|$.

\subsection{Proof of Proposition~\ref{prop: var_lower_bound_dimers}}
In this section, we prove Proposition~\ref{prop: var_lower_bound_dimers}.
\begin{proof}
 For any smooth function $\varphi$ of $N$ i.i.d.\ standard Gaussian random variables, it is well known that, e.g., see \cite[page 59]{chatterjee2014superconcentration},
\[ \Var(\varphi) = \sum_{k=1}^N \frac{1}{k!}\sum_{1\le i_1, i_2, \ldots, i_k \le N}
(\E  [\partial_{x_1} \cdots \partial_{x_k} f])^2. \]
Considering only the terms with $k=1$  and applying Cauchy-Schwarz, we arrive at
the following lower bound on the variance of $\varphi$, 
 \begin{equation}\label{eq:lb_var_gaussian}
   \Var(\varphi) \ge N^{-1} 
\Big(\E  \Big[\sum_{i=1}^N \partial_{x_i}f \Big ]\Big)^2.
 \end{equation}
The above inequality also holds for any coordinate-wise differentiable function $\varphi$, and in particular,  for $\langle |\M_G| \rangle$ once we condition on the vertex weights $\nu = (\nu_x)_{x \in V(G)}$. Recall that the partial derivative of  $\langle |\M_G| \rangle$ with respect to the edge weight $w_e$ is given by 
\[ \partial_{w_e} \langle |\M_G| \rangle = \langle \1_{\{ e \in \M_G\} } |\M_G| \rangle - \langle 1_{\{ e \in \M_G\}} \rangle \langle |\M_G| \rangle. \]
Hence, from \eqref{eq:lb_var_gaussian} and Jensen's inequality, we obtain
\begin{align}
\Var (\langle |\M_G| \rangle) \ge \E_\nu \Var \big (\langle |\M_G| \rangle \mid  \nu \big) &\ge \E_\nu |E(G)|^{-1}  \big[\E \big( \langle |\M_G|^2 \rangle -  \langle |\M_G| \rangle^2 \mid \nu \big)\big]^2  \nonumber\\
&\ge |E(G)|^{-1} \big[\E \big( \langle |\M_G|^2 \rangle -  \langle |\M_G| \rangle^2 \big)\big]^2. \label{var_lb1}
\end{align}
Thus, we need to lower bound the annealed Gibbs variance of $|\M_G|$. We adapt arguments from \cite{kahn2000normal} where a linear lower bound on variance was provided for the size of a uniformly chosen (unweighted) matching of a bounded degree graph. 

Let $\gE$ and $\gVar$ denote the expectation and variance with respect to the Gibbs measure, which by gauge transformation (see, for instance, \cite[Lemma~1.11]{DeyKrishnan}), can be written as 
\[ \mu_G(M) \propto \exp \Big( \sum_{e \in M} \tilde w_e \Big), \ \ M \in \mathcal{M}_G, \]
where $\tilde w_e = w_e  - (\nu_x + \nu_y)$ for $e = (xy)$.
With this notation, the inequality \eqref{var_lb1} can be rewritten as
\begin{equation}\label{var_lb2}
   \Var (\langle |\M_G| \rangle)  \ge |E(G)|^{-1} \big[\E \gVar(|\M_G|)\big]^2.  
\end{equation}
We would like to lower bound $\E \gVar(|\M_G|)$. Note that a linear lower bound (in $|E(G)|$) is obtained in \cite[Corollary~5.10]{DeyKrishnan}, which relies on the Lee-Yang zeros and requires a strong assmuption on the weight distributions. We will obtain the same lower bound by using another approach, which is applicable to more general situations.

Let $E_K$ be the subset of edges of $G$ satisfying $\tilde w_e < K.$ Let $F \subseteq E_K$ be a matching (no two edges are incident on the same vertex). 

\noindent \textbf{Claim.} For any matching $F \subseteq E_K$,
\[ \gVar(|\M_G|) \ge (1+ e^K)^{-1}\gE|\M_G \cap F|.  \]
Let $\gH = \M_G \setminus F$ and let $F_\gH$ be
the set of edges of $F$ that do not meet any edges from $\gH$. 
Note that given $\gH$, by edge-disjointness, the conditional Gibbs distribution of $\M_G \cap F$ is the product Bernoulli measure on $F_\gH$ such that for each $e \in F_\gH$, the event $\{ e \in \M_G\}$  has Gibbs probability  $p_e:= e^{\tilde w_e}/ (1+ e^{\tilde w_e}).$ 
Then
\begin{equation}\label{Gibbs_var}
    \gVar(|\M_G|) \ge \gE \big[\gVar(|\M_G| \mid \gH)\big] = \gE \bigg[\sum_{ e \in F_\gH} p_e(1 - p_e)\bigg]. 
\end{equation}  
On the other hand, 
\begin{equation}\label{Gibbs_exp}
 \gE|\M_G \cap F| = \gE \big[\gE(|\M_G \cap F| \mid \gH)\big] = \gE \bigg[\sum_{ e \in F_\gH} p_e\bigg].  
 \end{equation}
Since for each $e \in E_K$, $e^{\tilde w_e} < e^K$, we have 
$1- p_e > (1+ e^K)^{-1}$. Therefore, the claim follows by comparing \eqref{Gibbs_var} and \eqref{Gibbs_exp}.

Since the graph $(V(G), E_K)$ has the maximum degree at most $D$, by Vizing's theorem, there exists a partition of the edge set $E_K$ into at most $D+1$ matchings $F_1, F_2, \ldots, F_{D+1}$. By Claim, 
\begin{align*}
 \gVar(|\M_G|) &\ge (1+ e^K)^{-1} \max_{ 1 \le i \le D +1} \gE |\M_G \cap F_i| \\
&\ge (1+ e^K)^{-1} \frac{1}{D +1} \sum_{i=1}^{D+1} \gE |\M_G \cap F_i| \\
&= (1+ e^K)^{-1} (D +1)^{-1} \gE |\M_G \cap E_K|.
\end{align*}
Therefore, by writing $c_1 = (1+ e^K)^{-1} (D +1)^{-1} $
\[ \E  \big[\gVar(|\M_G|)\big] \ge c_1 \sum_{ e \in E} \E [ \1_{\{ \tilde w_e < K \} } \gE  \1_{\{ e \in \M_G \} }   ]. \]
Let $Y_e$ be the union of the edge $e$ and all the edges of $G$ adjacent to $e$. Note that $|Y_e| \le 2 D +1$.
Suppose that $\tilde w_e  = \max_{ f \in Y_e} \tilde w_f \in [-K, K]$. Then there exists a positive constant $c_2>0$, depending only on $D$ and $K$, such that 
conditional on $\M \setminus Y_e$, 
\[ \gE [\1_{\{ e \in \M_G \} } \mid \M \setminus Y_e]  \ge c_2.\]
Finally, by choosing $K$ sufficiently large, depending on the vertex weight distribution, we have  
\[ \E  \big[\gVar(|\M_G|)\big] \ge c_1 c_2 \sum_{ e \in E} \prob ( \tilde w_e =  \max_{ f \in Y_e} \tilde w_f \in [-K, K]) \ge c |E(G)|, \] 
for some positive constant $c$, depending only on $D$ and $K$. By \eqref{var_lb2}, the proof of the lemma is now complete.
\end{proof}

\section{Open questions}\label{sec:open_prob}
We conclude the paper by listing a few open problems. 

\begin{itemize} 
\item Is it possible to remove the subexponential growth assumption on the underlying graphs and replace the finite $(2+\eps)$-moment assumption with just the finite second-moment assumption in Theorem~\ref{thm: CLT_two_moments}? 

\item It is interesting to see if the bounded degree assumption on the underlying 
graphs can be relaxed to include random graph models that are ``stochastically bounded degree", for example, Erd\"os-R\'enyi graphs $G(N, c/N)$ for constant $c>0$.

\item Does the CLT for the averaged number of dimers stated in Corollary~\ref{cor:clt_dimer} hold for graphs with exponential volume growth?  Also, can the variance lower bound of Proposition~\ref{prop: var_lower_bound_dimers} be extended for the non-Gaussian edge weights? 

\item Consider the disordered monomer-dimer model with  $\beta = \infty$, the so-called ``zero-temperature'' case. The free energy then reduces to the weight of the maximum weight matching on $G$, which is unique almost surely if the weights are assumed to be continuous. It is natural to ask whether this model exhibits a decay of correlation on any bounded degree graphs, say, on the finite boxes in $\Z^d$? The case when $G$ is a sparse Erd\H{o}s-R\'enyi graph or a random regular graph was studied in Gamarnik et.\ al.\ \cite{Gamarnik}, and a correlation decay was shown for exponentially distributed edge weights. Relying on this correlation decay result, Cao \cite{cao2021central} was able to prove the central limit theorem for the weight of the maximum weight matching in sparse Erd\H{o}s-R\'enyi graph with exponential edge weights. Beyond that,  any correlation decay result is unavailable for bounded degree graphs including non-regular trees, and the central limit theorem for the weight of maximum matching remains an open problem. 

\item Another related interesting model to look at would be the disordered pure dimer model on a finite box on $\mathbb{Z}^d$ with an even side-length. The configuration space here is the set of all perfect matchings (no monomers).  Does this model have  the correlation decay  at any temperature and at any dimension? What about  the central limit theorem for the free energy?  It might be worthwhile to mention that for $d=2$ or more generally for finite planar graphs, the weighted dimer model is known to be exactly solvable due to Kasteleyn \cite{kasteleyn1961statistics, kasteleyn1967graph}  and independently Temperley and Fisher \cite{temperley1961dimer}. Indeed, its partition function has an explicit formula in terms of Pfaffian (or determinant in case of bipartite graphs) of the Kasteleyn matrix. The formulas for correlation functions of the edges are also available due to Kenyon \cite{kenyon1997local}. For $d=2$, the dimer model with random weights (under some ellipticity assumption on the weights)  falls under the general framework considered by Berestycki, Laslier, and Ray \cite{berestycki2020dimers}  (see also \cite{ray2021quantitative}), where they showed that the fluctuation of the height function of a random dimer configuration converges to a Gaussian free field for a wide class of planar graphs. Their work implies a correlation decay for the dimer model. 
\end{itemize} 

\section*{Acknowledgement} We thank Gourab Ray for bringing the references \cite{berestycki2020dimers, ray2021quantitative} to our attention. We thank the anonymous referee for valuable comments and suggestions.  

\bibliographystyle{plain}
\bibliography{pos_temp}

\end{document}